\documentclass[12pt]{amsart}
\usepackage{amssymb}
\usepackage[all]{xy}

\textwidth=160mm
\textheight=200mm
\topmargin=20mm
\hoffset=-20mm

\newtheorem{theorem}{Theorem}[section]
\newtheorem{definition}[theorem]{Definition}
\newtheorem{proposition}[theorem]{Proposition}

\begin{document}

\title[Homogeneous quantum groups]{Homogeneous quantum groups and their easiness level}

\author[Teo Banica]{Teo Banica}
\address{T.B.: Department of Mathematics, University of Cergy-Pontoise, F-95000 Cergy-Pontoise, France. {\tt teo.banica@gmail.com}}

\subjclass[2010]{46L65 (46L54)}
\keywords{Easy quantum group, Category of partitions}

\begin{abstract}
Given a closed subgroup $G\subset U_N^+$ which is homogeneous, in the sense that we have $S_N\subset G\subset U_N^+$, the corresponding Tannakian category $C$ must satisfy $span(\mathcal{NC}_2)\subset C\subset span(P)$. Based on this observation, we construct a certain integer $p\in\mathbb N\cup\{\infty\}$, that we call ``easiness level'' of $G$. The value $p=1$ corresponds to the case where $G$ is easy, and we explore here, with some theory and examples, the case $p>1$. As a main application, we show that $S_N\subset S_N^+$ and other liberation inclusions, known to be maximal in the easy setting, remain maximal at the easiness level $p=2$ as well.
\end{abstract}

\maketitle

\section*{Introduction}

The easy quantum groups were introduced in our joint paper with Speicher \cite{bsp}, following some previous work with Bichon and Collins \cite{bbc}. The idea is very simple. Given a closed subgroup $G\subset U_N^+$ which is homogeneous, in the sense that we have $S_N\subset G\subset U_N^+$, the corresponding Tannakian category $C$ must appear as follows:
$$span(\mathcal{NC}_2)\subset C\subset span(P)$$

Here $P,{\mathcal NC}_2$ are respectively the categories of partitions, and of the matching noncrossing pairings, which are known to correspond, via Brauer type results, to $S_N,U_N^+$. As for the result itself, this follows from Woronowicz's Tannakian theory in \cite{wo2}.

Based on this fact, the idea in \cite{bsp} was to call $G$ ``easy'' when $C$ appears in the simplest possible way: $C=span(D)$, for a certain category of partitions ${\mathcal NC}_2\subset D\subset P$. Such quantum groups can be investigated with various combinatorial tools, partly coming from Voiculescu's free probability theory \cite{vdn}, and a whole theory, featuring several non-trivial structure and classification results, was built in this way. See \cite{bcs}, \cite{rw3}, \cite{tw1}.

Going beyond easiness is a tricky task, and several attempts have been made, over the last years \cite{cwe}, \cite{fr1}, \cite{fr2}, \cite{swe}. Our aim here is to present one more such attempt.

To be more precise, to any homogeneous quantum group $S_N\subset G\subset U_N^+$ we will associate a certain integer $p\in\mathbb N\cup\{\infty\}$, that we call ``easiness level'' of $G$. The value $p=1$ corresponds to the case where $G$ is easy, and we will discuss here the case $p>1$. As a main application, we will show that certain liberation inclusions $G\subset G^\times$, which are maximal in the easy setting, remain maximal at the easiness level 2 as well.

The construction of the easiness level will be done as follows:
\begin{enumerate}
\item Our first observation is that any homogeneous quantum group $S_N\subset G\subset U_N^+$ has an ``easy envelope'', that we denote by $G^1$. This easy envelope is the smallest intermediate easy quantum group $G\subset G^1\subset U_N^+$, and its category of partitions consists of the partitions $\pi\in P$ which belong to the Tannakian category of $G$.

\item More generally, we can consider the linear combinations of type $\alpha_1\pi_1+\ldots+\alpha_p\pi_p$, of fixed length $p\in\mathbb N$, which belong to the Tannakian category of $G$. These combinations do not form a Tannakian category, but we can consider the Tannakian category generated by them, and we obtain in this way a quantum group $G^p$.

\item While the construction $G\to G^p$ is something quite abstract at $p\geq2$, we can still study it, by using various abstract Tannakian methods. Our main theoretical result here is that the quantum groups $G^p$ constructed above from a decreasing family, $G_1\supset G_2\supset G_3\supset\ldots\supset G$, whose intersection is $G$.

\item Based on these facts, we will define the easiness level of $G$ to be the smallest number $p\in\mathbb N\cup\{\infty\}$ such that we have $G=G^p$, with the convention that the value $p=\infty$ corresponds to the case where $G\neq G^p$, for any $p\in\mathbb N$. As an illustration, the case $p=1$ corresponds to the case where $G$ is easy.
\end{enumerate}

In addition to the above facts, we will prove that we have $p\leq B_r$, where $r$ is the presentation level of the discrete quantum group dual $\Gamma=\widehat{G}$, and $B_r$ is the $r$-th Bell number. Once again, this is something that follows from Tannakian duality.

At the level of examples now, the situation is quite interesting, and there is definitely work to be done. Many of the known examples of homogeneous quantum groups $S_N\subset G\subset U_N^+$ are in fact easy, but we have some non-easy examples as well, as follows:
\begin{enumerate}
\item Let $U_N^d=\{g\in U_N|\det g\in\mathbb Z_d\}$, where $\mathbb Z_d$ is the group of $d$-th roots of unity. When $d$ is even we have a homogeneous group, $S_N\subset U_N^d\subset U_N$, and we will show that the enveloping easy group is $U_N$. The combinatorics is quite interesting, related to Woronowicz' computations in \cite{wo2}, for $SU_N=U_N^1$, and its deformations.

\item Let $H_N^s=\mathbb Z_s\wr S_N$, and $H_N^{s,d}=\{g\in H_N^s|\det g\in\mathbb Z_d\}$. The group $H_N^s$ is known from \cite{fbl} to be easy, and we will investigate here the easiness properties of its subgroup $H_N^{s,d}=H_N^s\cap U_N^d$. Once again, the combinatorics is interesting, related to \cite{fbl}. In addition, $H_N^{s,d}$ plays an important role in reflection group theory \cite{sto}.

\item At the quantum group level now, we have a construction, which is new. The idea is that, according to \cite{bsp}, \cite{rau}, we have an inclusion $B_N\subset B_N^+$, an isomorphism $B_N^+\simeq O_{N-1}^+$, and inclusions $O_{N-1}\subset O_{N-1}^*\subset O_{N-1}^+$.  Thus by taking the image of $O_{N-1}^*$ inside $B_N^+$ we obtain a quantum group $B_N^\circ$, that we will study here.

\item There is as well a complex analogue of the above construction to be studied, with $B_N$ replaced by the complex bistochastic group $C_N$, from \cite{tw1}, \cite{tw2}. By using the same method we obtain a quantum group $C_N^\circ$, that we will study. In fact, since $U_N^*$ is not ``unique'', we will obtain in this way several new quantum groups.
\end{enumerate}

Summarizing, talking about the easiness level leads us into looking at the non-easy examples of homogeneous quantum groups $S_N\subset G\subset U_N^+$, and their combinatorics. The whole subject is definitely interesting, and we will do some exploration work here.

At the level of the applications now, our idea will be that of investigating maximality questions, for inclusions of easy quantum groups. With suitable definitions, it is known from \cite{bcs}, \cite{bsp}, \cite{rw3} that there are precisely $4$ ``true'' liberations of orthogonal easy quantum groups, with the intermediate liberations, in the easy framework, as follows:
\begin{enumerate}
\item $S_N\subset S_N^+$, with no intermediate object.

\item $H_N\subset H_N^+$, with uncountably many intermediate objects.

\item $O_N\subset O_N^+$, with $O_N^*$ as unique intermediate object.

\item $B_N\subset B_N^+$, with no intermediate object.
\end{enumerate}

These inclusions are all very interesting. A well-known conjecture, going back to our paper with Bichon \cite{bb1}, and which is perhaps the most important open question regarding the quantum permutation groups, states that $S_N\subset S_N^+$ is maximal. Regarding now $H_N\subset H_N^+$, this inclusion plays a key role in the classification of the easy quantum groups, as shown by Raum and Weber in their work \cite{rw1}, \cite{rw2}, \cite{rw3}. The inclusions $O_N\subset O_N^*\subset O_N^+$ are quite subtle too, and as explained in \cite{max}, their study can effectively be done, and corresponds somehow to a ``warm-up'' for the study of the $S_N\subset S_N^+$ conjecture. As for the inclusion $B_N\subset B_N^+$, this is quite interesting too, in view of the above-mentioned intermediate quantum group $B_N\subset B_N^\circ\subset B_N^+$ that we construct here.

Thus, we have here yet another rich landscape of open problems, this time rather well-known to specialists, but still waiting to be further investigated. We will present here a few contributions to this subject, as follows:
\begin{enumerate}
\item According to \cite{bb1} we have $S_4^+=SO_3^{-1}$, the subgroups $G\subset S_4^+$ are subject to an ADE type classification, and in particular, $S_4\subset S_4^+$ follows to be maximal. By using some recent advances from subfactor theory, from \cite{imp}, \cite{jms}, we will prove here that the inclusion $S_5\subset S_5^+$ is maximal as well.

\item It is known from \cite{max} that the inclusion $O_N\subset O_N^*$ is maximal, and in connection to this, we have two remarks. First, the results in \cite{bdu} allow in principle to simplify the proof in \cite{max}, and this remains to be done. And second, it follows from \cite{max} that the inclusion $B_N\subset B_N^\circ$ that we construct here is maximal as well.

\item We will investigate the inclusion $S_N\subset S_N^+$, by using our notion of easiness level. To be more precise, the result from \cite{bcs}, stating that $S_N\subset S_N^+$ is maximal in the easy setting, tells us that this inclusion is maximal at order $p=1$. We will show here that this inclusion is maximal at level $p=2$ as well.

\item Finally, we will present similar results, in terms of the easiness level, for the inclusions $O_N\subset O_N^*\subset O_N^+$, and $B_N\subset B_N^+$. Regarding the inclusion $H_N\subset H_N^+$, which has uncountably many intermediate objects, the situation here is of course considerably more complicated, and we have no advances on it.
\end{enumerate}

The paper is organized as follows: in 1-2 we discuss the homogeneous quantum groups and their easiness level, with some general results, in 3-4 and 5-6 we study some non-trivial examples, coming from the classical unitary and reflection groups, and from certain half-liberations, and in 7-8 we discuss the various maximality questions for the liberation inclusions of easy groups, notably by using our notion of easiness level.

\bigskip

\noindent {\bf Acknowledgements.} I would like to thank J. Bichon, B. Collins, S. Curran for many discussions on maximality questions, a few years ago. Thanks to Poulette, too.

\section{Homogeneous quantum groups}

We use Woronowicz's quantum group formalism in \cite{wo1}, \cite{wo2}, under the extra assumption $S^2=id$. To be more precise, the definition that we will need is:

\begin{definition}
Assume that $(A,u)$ is a pair consisting of a $C^*$-algebra $A$, and a unitary matrix $u\in M_N(A)$ whose coefficients generate $A$, such that the formulae
$$\Delta(u_{ij})=\sum_ku_{ik}\otimes u_{kj}\quad,\quad \varepsilon(u_{ij})=\delta_{ij}\quad,\quad S(u_{ij})=u_{ji}^*$$
define morphisms of $C^*$-algebras $\Delta:A\to A\otimes A$, $\varepsilon:A\to\mathbb C$, $S:A\to A^{opp}$. We write then $A=C(G)$, and call $G$ a compact matrix quantum group.
\end{definition}

The basic examples are the compact Lie groups, $G\subset U_N$. Indeed, given such a group we can set $A=C(G)$, and let $u_{ij}:G\to\mathbb C$ be the standard coordinates, $u_{ij}(g)=g_{ij}$. The axioms are then satisfied, with $\Delta,\varepsilon,S$ being the functional analytic transposes of the multiplication $m:G\times G\to G$, unit map $u:\{.\}\to G$, and inverse map $i:G\to G$.

There are many other examples. For instance given a finitely generated discrete group $\Gamma=<g_1,\ldots,g_N>$ we can set $A=C^*(\Gamma)$, and $u=diag(g_1,\ldots,g_N)$. The axioms are once again satisfied, and the resulting quantum group is denoted $G=\widehat{\Gamma}$. See \cite{wo1}.

The following key construction is due to Wang \cite{wa1}:

\begin{proposition}
We have a compact quantum group $U_N^+$, defined via
$$C(U_N^+)=C^*\left((u_{ij})_{i,j=1,\ldots,N}\Big|u^*=u^{-1},u^t=\bar{u}^{-1}\right)$$
and the compact matrix quantum groups are precisely the closed subgroups $G\subset U_N^+$.
\end{proposition}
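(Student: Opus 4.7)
The strategy is two-fold: construct $U_N^+$ and verify it satisfies Definition 1.1, then show that any pair $(A,u)$ as in that definition is automatically a quotient of $C(U_N^+)$.

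For the construction, I would first observe that the universal $C^*$-algebra $C(U_N^+)$ exists: the relation $u^*u=1$ forces each generator $u_{ij}$ to satisfy $u_{ij}^*u_{ij}\leq 1$ in every $*$-representation, so the universal $C^*$-seminorm is finite and defines a genuine norm. Next, to produce $\Delta,\varepsilon,S$ it suffices by universality to check that the candidate images of the generators satisfy both defining relations. For $\Delta$, the candidate matrix $v=(u\otimes 1)(1\otimes u)\in M_N(A\otimes A)$ is unitary as a product of unitaries, and a short computation reduces $v^t\bar v$ to $1\otimes(u^t\bar u)=1$. For $\varepsilon$ the target is the identity matrix, which trivially satisfies both relations. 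For $S$ the candidate is $w_{ij}=u_{ji}^*$, and one verifies that $w$ is unitary in $A^{opp}$ and that $w^t\bar w=1$ there, both of which reduce to the original relations on $u$. The coassociativity, counitality, and antipode axioms are then checked on generators by direct index manipulation.

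For the correspondence with closed subgroups, one direction is by definition: a closed subgroup of $U_N^+$ is a quotient of $C(U_N^+)$ intertwining the coproducts, hence falls under Definition 1.1. For the converse, I would show that any $(A,u)$ satisfying Definition 1.1 obeys both of Wang's relations. Unitarity of $u$ is already part of the hypothesis, so only $u^t=\bar u^{-1}$ needs work. The antipode axioms on the generators read
\begin{equation*}
\sum_k u_{ki}^*u_{kj}=\delta_{ij},\qquad \sum_k u_{ik}u_{jk}^*=\delta_{ij}.
\end{equation*}
Applying $S$ to the second identity and using that $S$ is antimultiplicative with $S(a^*)=S(a)^*$ (a consequence of $S^2=id$), one gets $\sum_k u_{kj}u_{ki}^*=\delta_{ij}$, which rearranges to $u^t\bar u=1$; applying $S$ to the first identity yields $\bar u\,u^t=1$. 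The universal property then gives a surjective $*$-morphism $\pi:C(U_N^+)\to A$, and $\pi$ intertwines the coproducts because they agree on generators. Hence $(A,u)$ appears as a closed subgroup of $U_N^+$.

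The only delicate step is the derivation of the second Wang relation from the abstract axioms, since it genuinely uses both the antimultiplicativity of $S$ and the identity $S(a^*)=S(a)^*$, the latter being precisely where the standing assumption $S^2=id$ enters. Everything else is a mechanical check of universal properties and matrix identities.
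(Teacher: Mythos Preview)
Your proposal is correct and follows essentially the same approach as the paper: verify biunitarity is preserved under the candidate maps $\Delta,\varepsilon,S$ to construct $U_N^+$, and then derive $u^t=\bar{u}^{-1}$ from unitarity by applying $S$. Your write-up is simply more detailed --- you make explicit the existence of the universal $C^*$-algebra and the role of $S^2=id$ in obtaining $S(a^*)=S(a)^*$, whereas the paper compresses the second step to the single phrase ``by applying $S$ we obtain $u^t=\bar{u}^{-1}$.''
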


\begin{proof}
It is routine to check that if $u=(u_{ij})$ is biunitary ($u^*=u^{-1},u^t=\bar{u}^{-1}$), then so are the matrices $u^\Delta=(\sum_ku_{ik}\otimes u_{kj})$, $u^\varepsilon=(\delta_{ij})$, $u^S=(u_{ji}^*)$. Thus we can construct $\Delta,\varepsilon,S$ as in Definition 1.1, by using the universal property of $C(U_N^+)$.

Regarding the last assertion, in the context of Definition 1.1 we have $u^*=u^{-1}$, and by applying $S$ we obtain $u^t=\bar{u}^{-1}$. Thus $u$ is biunitary, so we have a quotient map $C(U_N^+)\to C(G)$, which corresponds to an inclusion of quantum groups $G\subset U_N^+$. 
\end{proof}

Consider the standard action $S_N\curvearrowright\mathbb C^N$, obtained by permuting the coordinates, $\sigma(e_i)=e_{\sigma(i)}$. This action provides us with an embedding $S_N\subset U_N$, and so with an embedding $S_N\subset U_N^+$. We are interested here in the following quantum groups:

\begin{definition}
A closed subgroup $G\subset U_N^+$ is called homogeneous if it appears as
$$S_N\subset G\subset U_N^+$$
where the inclusion $S_N\subset U_N^+$ comes via the standard permutation matrices.
\end{definition}

The idea will be that of investigating such quantum groups by using Tannakian duality techniques. Let us first recall that asssociated to a closed subgroup $G\subset U_N^+$ is its Tannakian category $C=(C(k,l))$, formed by the following linear spaces:
$$C_{kl}=Hom(u^{\otimes k},u^{\otimes l})$$

Here the exponents $k,l$ are by definition colored integers, $\circ\bullet\bullet\circ\ldots$, with the corresponding powers of $u$ being given by $u^\circ=u,u^\bullet=\bar{u}$ and multiplicativity. 

Now let $P(k,l)$ be the set of partitions between an upper row of points representing $k$, and a lower row of points representing $l$. We will represent these partitions as pictures, with $|^{\hskip-1.3mm\circ}_{\hskip-1.3mm\circ},|^{\hskip-1.3mm\bullet}_{\hskip-1.3mm\bullet},{\ }_\circ\hskip-1.25mm\cap_{\!\!\bullet},{\ }_\bullet\hskip-1.25mm\cap_{\!\!\circ}$ belonging for instance to $P(\circ,\circ)$, $P(\bullet,\bullet)$, $P(\emptyset,\circ\bullet)$, $P(\emptyset,\bullet\circ)$.

Following \cite{bsp}, \cite{tw1}, let us introduce:

\begin{definition}
A category of partitions is a family of subsets $D(k,l)\subset P(k,l)$ containing the identity and duality partitions $|^{\hskip-1.3mm\circ}_{\hskip-1.3mm\circ},|^{\hskip-1.3mm\bullet}_{\hskip-1.3mm\bullet},{\ }_\circ\hskip-1.25mm\cap_{\!\!\bullet},{\ }_\bullet\hskip-1.25mm\cap_{\!\!\circ}$, and which is stable under:
\begin{enumerate}
\item The horizontal concatenation operation, $(\pi,\sigma)\to[\pi\sigma]$.

\item The vertical concatenation, with the middle components erased, $(\pi,\sigma)\to[^\sigma_\pi]$. 

\item The upside-down turning, with switching of the colors, $\pi\to\pi^*$. 
\end{enumerate}
\end{definition}

As a basic example, we have $P$ itself. Another basic example is the category ${\mathcal NC}_2$ of noncrossing pairings which are ``matching'', in the sense that the horizontal strings must connect $\circ-\bullet$, and the vertical strings must connect $\circ-\circ$ or $\bullet-\bullet$. Observe that we have ${\mathcal NC}_2=<|^{\hskip-1.3mm\circ}_{\hskip-1.3mm\circ},|^{\hskip-1.3mm\bullet}_{\hskip-1.3mm\bullet},{\ }_\circ\hskip-1.25mm\cap_{\!\!\bullet},{\ }_\bullet\hskip-1.25mm\cap_{\!\!\circ}>$. Thus, if $D$ is a category of partitions, then ${\mathcal NC}_2\subset D\subset P$.

The relation with the quantum groups comes from the following construction:

\begin{definition}
Associated to a partition $\pi\in P(k,l)$ is the linear map
$$T_\pi(e_{i_1}\otimes\ldots\otimes e_{i_k})=\sum_{j_1\ldots j_l}\delta_\pi\begin{pmatrix}i_1&\ldots&i_k\\ j_1&\ldots&j_l\end{pmatrix}e_{j_1}\otimes\ldots\otimes e_{j_l}$$
where $e_1,\ldots,e_N$ is the standard basis of $\mathbb C^N$, and $\delta_\pi\in\{0,1\}$ is a Kronecker symbol.
\end{definition}

To be more precise, here the Kronecker symbol $\delta_\pi$ takes by definition the value 1 when each block of $\pi$ contains equal indices, and takes the value 0, otherwise.

As explained in \cite{bsp}, the correspondence $\pi\to T_\pi$ has a number of remarkable categorical properties, summarized in the following formulae:
$$T_{[\pi\sigma]}=T_\pi\otimes T_\sigma\quad,\quad T_{[^\sigma_\pi]}\sim T_\pi T_\sigma\quad,\quad T_{\pi^*}=T_\pi^*$$

With these ingredients in hand, we can now formulate:

\begin{definition}
A homogeneous quantum group $S_N\subset G\subset U_N^+$ is called easy when 
$$Hom(u^{\otimes k},u^{\otimes l})=span\left(T_\pi\Big|\pi\in D(k,l)\right)$$
for any colored integers $k,l$, for a certain category of partitions ${\mathcal NC}_2\subset D\subset P$.
\end{definition}

As basic examples, both $S_N,U_N^+$ are known to be easy, the corresponding categories of partitions being respectively $P,{\mathcal NC}_2$. There are many other examples, coming from the fact that each category of partitions ${\mathcal NC}_2\subset D\subset P$ produces, via Woronowicz' Tannakian duality results in \cite{wo2}, a certain homogeneous quantum group $S_N\subset G\subset U_N^+$. See \cite{bsp}.

We will be mainly interested here in the non-easy case, and we will need:

\begin{theorem}
The homogeneous quantum groups $S_N\subset G\subset U_N^+$ are in one-to-one correspondence with the intermediate tensor categories
$$span\left(T_\pi\Big|\pi\in\mathcal{NC}_2\right)\subset C\subset span\left(T_\pi\Big|\pi\in P\right)$$
where $P$ is the category of all partitions, ${\mathcal NC}_2$ is the category of the matching noncrossing pairings, and $\pi\to T_\pi$ is the construction in Definition 1.5.
\end{theorem}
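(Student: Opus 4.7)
The plan is to deduce this directly from Woronowicz's Tannakian duality in \cite{wo2}, once the two endpoints $S_N$ and $U_N^+$ are identified on the category side. The key point to record is that the whole correspondence is order-reversing with respect to inclusion: a closed subgroup $H\subset G\subset U_N^+$ forces $Hom_H(u^{\otimes k},u^{\otimes l})\supset Hom_G(u^{\otimes k},u^{\otimes l})$, because every $G$-intertwiner of tensor powers of the fundamental corepresentation is automatically an $H$-intertwiner.

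First I would invoke the version of Tannakian duality needed here, namely that sending $G\subset U_N^+$ to the collection $C_G=(Hom(u^{\otimes k},u^{\otimes l}))_{k,l}$ is a bijection between closed subgroups of $U_N^+$ and tensor categories $C$ satisfying $C_{U_N^+}\subset C\subset C_{S_N}$ (in general, the upper bound is the full tensor category of linear maps between tensor powers of $\mathbb C^N$; here we use the permutation embedding $S_N\subset U_N^+$ to bring the bound down to $C_{S_N}$). The usual axioms a tensor category must satisfy — stability under composition, tensor product, taking adjoints, and containing the identity and the duality maps associated to the biunitarity of $u$ — correspond exactly to the categorical operations on $Hom$-spaces.

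Next I would identify the two endpoints explicitly using the already-cited Brauer type results: $U_N^+$ is easy with category $\mathcal{NC}_2$, so
$$Hom(u^{\otimes k},u^{\otimes l})=span\bigl(T_\pi\,\big|\,\pi\in\mathcal{NC}_2(k,l)\bigr)$$
for $G=U_N^+$; and $S_N$ is easy with category $P$, so the same holds with $P$ in place of $\mathcal{NC}_2$ for $G=S_N$. Combined with the order-reversal, a homogeneous $S_N\subset G\subset U_N^+$ corresponds to a tensor category $C$ with the sandwich
$$span\bigl(T_\pi\,\big|\,\pi\in\mathcal{NC}_2\bigr)\ \subset\ C\ \subset\ span\bigl(T_\pi\,\big|\,\pi\in P\bigr),$$
which is the claim.

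The only mildly nontrivial point is the easiness of the two endpoints, but both are standard: for $U_N^+$ it is Wang's/Brauer computation recalled in \cite{bsp}, and for $S_N$ it is the classical Schur–Weyl-type statement that the commutant of the permutation representation on $(\mathbb C^N)^{\otimes k}$ is spanned by the set-partition operators $T_\pi$ with $\pi\in P(k,k)$, extended to mixed $k,l$ by Frobenius reciprocity via the duality maps. Everything else is a direct application of \cite{wo2}, with no genuine obstacle; the main thing to be careful about is writing down the order-reversal and verifying that the tensor-category axioms translate into the three closure operations used to define a category of partitions in Definition 1.4.
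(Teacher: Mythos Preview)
Your proposal is correct and follows essentially the same route as the paper: invoke Woronowicz's Tannakian duality \cite{wo2} (the paper also cites the soft form in \cite{mal}) to get the order-reversing bijection $G\leftrightarrow C_G$, then identify the two endpoints via the known easiness of $S_N$ and $U_N^+$. The only minor wrinkle is your closing remark about translating tensor-category axioms into the closure operations of Definition~1.4 --- that is not actually needed here, since $C$ in this theorem is an arbitrary intermediate tensor category of linear maps, not assumed to be of the form $span(D)$ for a category of partitions.
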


\begin{proof}
This follows from Woronowicz' Tannakian duality results in \cite{wo2}, and from the above-mentioned Brauer type results for $S_N,U_N^+$. To be more precise, we know from \cite{wo2}, or rather from the ``soft'' form of the duality, from \cite{mal}, that each closed subgroup $G\subset U_N^+$ can be reconstructed from its Tannakian category $C=(C(k,l))$, as follows:
$$C(G)=C(U_N^+)\Big/\left<T\in Hom(u^{\otimes k},u^{\otimes l})\Big|\forall k,l,\forall T\in C(k,l)\right>$$

Thus we have a one-to-one correspondence $G\leftrightarrow C$, and since the endpoints $G=S_N,U_N^+$ are both easy, corresponding to the categories $C=span(T_\pi|\pi\in D)$ with $D=P,{\mathcal NC}_2$, this gives the result. For full details regarding all this, see \cite{bsp}, \cite{tw1}.
\end{proof}

\section{The easiness level}

Our purpose in what follows will be that of using the Tannakian result in Theorem 1.7 above, in order to introduce and study a combinatorial notion of ``easiness level'', for the arbitrary intermediate quantum groups $S_N\subset G\subset U_N^+$. 

Let us begin with the following simple fact:

\begin{proposition}
Given a homogeneous quantum group $S_N\subset G\subset U_N^+$, with associated Tannakian category $C=(C(k,l))$, the sets
$$D^1(k,l)=\left\{\pi\in P(k,l)\Big|T_\pi\in C(k,l)\right\}$$ 
form a category of partitions, in the sense of Definition 1.4.
\end{proposition}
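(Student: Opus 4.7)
The plan is to verify the four axioms of Definition 1.4 directly, using the three categorical identities
$$T_{[\pi\sigma]}=T_\pi\otimes T_\sigma,\quad T_{[^\sigma_\pi]}\sim T_\pi T_\sigma,\quad T_{\pi^*}=T_\pi^*$$
together with the fact that $C=(C(k,l))$, being the Tannakian category of a closed subgroup $G\subset U_N^+$, is stable under tensor products, compositions and adjoints, and contains the morphisms implementing biunitarity.

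First I would dispose of the ``containing the generators'' condition. Since $S_N\subset G\subset U_N^+$, the Tannakian category $C$ contains in particular the Tannakian category of $U_N^+$, which by the Brauer-type result is $span(T_\pi|\pi\in\mathcal{NC}_2)$. In particular, the four generating operators $T_\pi$ for $\pi\in\{|^{\hskip-1.3mm\circ}_{\hskip-1.3mm\circ},|^{\hskip-1.3mm\bullet}_{\hskip-1.3mm\bullet},{\ }_\circ\hskip-1.25mm\cap_{\!\!\bullet},{\ }_\bullet\hskip-1.25mm\cap_{\!\!\circ}\}$ all lie in $C$, so these four partitions lie in $D^1$.

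Next I would verify the three stability conditions one by one. For horizontal concatenation, if $\pi,\sigma\in D^1$, then $T_\pi,T_\sigma\in C$; since any Tannakian category is closed under tensor product of morphisms, $T_\pi\otimes T_\sigma=T_{[\pi\sigma]}\in C$, so $[\pi\sigma]\in D^1$. For the upside-down turning, if $\pi\in D^1$ then $T_\pi\in C$, hence $T_\pi^*=T_{\pi^*}\in C$ by closure of $C$ under adjoints, so $\pi^*\in D^1$. For vertical concatenation, if $\pi,\sigma\in D^1$ have compatible color pattern, then $T_\pi T_\sigma\in C$ by closure under composition; the relation $T_{[^\sigma_\pi]}\sim T_\pi T_\sigma$ reads $T_{[^\sigma_\pi]}=N^{-c}T_\pi T_\sigma$, where $c\geq 0$ counts the closed loops created in the middle row, so the proportionality constant is a nonzero scalar and therefore $T_{[^\sigma_\pi]}\in C$, giving $[^\sigma_\pi]\in D^1$.

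I do not expect any substantial obstacle here: the entire argument is a transcription, under the rule $\pi\leftrightarrow T_\pi$, of the four category-of-partitions axioms into the three categorical closure properties of a Tannakian category, plus the observation that $C$ already contains the basic pair-partition operators because $G\subset U_N^+$. The only mildly delicate point is the vertical-concatenation step, where one must remember that the identity $T_{[^\sigma_\pi]}\sim T_\pi T_\sigma$ involves a nonzero scaling factor $N^{-c}$; since $C$ is a linear space this rescaling is harmless, but it is worth flagging.
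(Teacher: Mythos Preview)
Your proof is correct and follows essentially the same route as the paper: both arguments transcribe the three axioms of Definition 1.4 into the closure properties of the Tannakian category $C$ via the identities $T_{[\pi\sigma]}=T_\pi\otimes T_\sigma$, $T_{[^\sigma_\pi]}\sim T_\pi T_\sigma$, $T_{\pi^*}=T_\pi^*$. Your version is slightly more thorough, in that you explicitly check that the four generating partitions lie in $D^1$ and spell out why the nonzero scalar $N^{-c}$ in the vertical-concatenation identity is harmless; the paper leaves both of these points implicit.
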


\begin{proof}
We use the basic categorical properties of the correspondence $\pi\to T_\pi$, namely:
$$T_{[\pi\sigma]}=T_\pi\otimes T_\sigma\quad,\quad T_{[^\sigma_\pi]}\sim T_\pi T_\sigma\quad,\quad T_{\pi^*}=T_\pi^*$$

Together with the fact that $C$ is a tensor category, we deduce from these formulae that we have the following implications:
\begin{eqnarray*}
\pi,\sigma\in D^1&\implies&T_\pi,T_\sigma\in C\implies T_\pi\otimes T_\sigma\in C\implies T_{[\pi\sigma]}\in C\implies[\pi\sigma]\in D^1\\
\pi,\sigma\in D^1&\implies&T_\pi,T_\sigma\in C\implies T_\pi T_\sigma\in C\implies T_{[^\sigma_\pi]}\in C\implies[^\sigma_\pi]\in D^1\\
\pi\in D^1&\implies&T_\pi\in C\implies T_\pi^*\in C\implies T_{\pi^*}\in C\implies\pi^*\in D^1
\end{eqnarray*}

Thus $D^1$ is indeed a category of partitions, in the sense of \cite{bsp}, as claimed.
\end{proof}

We can further refine the above observation, in the following way:

\begin{proposition}
Given a quantum group $S_N\subset G\subset U_N^+$, construct $D^1\subset P$ as above, and let $S_N\subset G^1\subset U_N^+$ be the easy quantum group associated to $D^1$. Then:
\begin{enumerate}

\item We have $G\subset G^1$, as subgroups of $U_N^+$.

\item $G^1$ is the smallest easy quantum group containing $G$.

\item $G$ is easy precisely when $G\subset G^1$ is an isomorphism.
\end{enumerate}
\end{proposition}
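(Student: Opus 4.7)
The plan is to reduce everything to Tannakian duality (Theorem 1.7), keeping in mind that the correspondence $G\leftrightarrow C$ is inclusion-reversing: a larger Tannakian category corresponds to a smaller quantum group. Denote by $C$ the Tannakian category of $G$, and by $C^1=\mathrm{span}(T_\pi\,|\,\pi\in D^1)$ the Tannakian category of $G^1$ (this is a genuine tensor category, since $D^1$ is a category of partitions by Proposition 2.1, and Woronowicz--Brauer associates to it the easy quantum group $G^1$).

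For part (1), the key observation is simply that $C^1\subset C$ by construction: every generator $T_\pi$ of $C^1$ has $\pi\in D^1$, which by the very definition of $D^1$ means $T_\pi\in C(k,l)$; since $C$ is a linear tensor category, all linear combinations $T_\pi$ with $\pi\in D^1$ lie in $C$, hence $C^1\subset C$. By Tannakian duality this translates into $G\subset G^1$.

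For part (2), suppose $G\subset G'$ with $G'$ easy, say coming from a category of partitions $D'$, so that the Tannakian category of $G'$ is $\mathrm{span}(T_\pi\,|\,\pi\in D')$. The inclusion $G\subset G'$ gives, again by Tannakian duality, $\mathrm{span}(T_\pi\,|\,\pi\in D')\subset C$, and in particular $T_\pi\in C$ for every $\pi\in D'$. By definition of $D^1$ this forces $D'\subset D^1$, hence $\mathrm{span}(T_\pi\,|\,\pi\in D')\subset C^1$, which by duality yields $G^1\subset G'$. Combined with (1), this shows that $G^1$ is indeed the smallest easy intermediate quantum group.

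For part (3), one implication is immediate: if $G=G^1$ then $G$ is easy, since $G^1$ is easy by construction. Conversely, if $G$ is already easy, with category of partitions $D$, then $G$ itself is an easy quantum group containing $G$, so by the minimality shown in (2) we get $G^1\subset G$, and together with (1) this gives $G=G^1$. None of the three steps presents a serious obstacle; the only thing to be careful about is the direction of the correspondence in Tannakian duality, and the elementary but crucial fact that $C^1$, spanned by the $T_\pi$ with $\pi\in D^1$, automatically sits inside $C$ because $C$ is closed under linear combinations.
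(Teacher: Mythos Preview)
Your proof is correct and follows essentially the same approach as the paper: both argue (1) by noting $C^1\subset C$ from the definition of $D^1$, (2) by showing that any easy $G'\supset G$ has $D'\subset D^1$, and (3) as an immediate consequence of (2). The only difference is that you spell out the converse direction in (3) explicitly, whereas the paper dismisses it as trivial.
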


\begin{proof}
All this is elementary, the proofs being as follows:

(1) We know that the Tannakian category of $G^1$ is given by:
$$C_{kl}^1=span\left(T_\pi\Big|\pi\in D^1(k,l)\right)$$

Thus we have $C^1\subset C$, and so $G\subset G^1$, as subgroups of $U_N^+$.

(2) Assuming that we have $G\subset G'$, with $G'$ easy, coming from a Tannakian category $C'=span(D')$, we must have $C'\subset C$, and so $D'\subset D^1$. Thus, $G^1\subset G'$, as desired.

(3) This is a trivial consequence of (2).
\end{proof}

Summarizing, we have now a notion of ``easy envelope'', as follows:

\begin{definition}
The easy envelope of a homogeneous quantum group $S_N\subset G\subset U_N^+$ is the easy quantum group $S_N\subset G^1\subset U_N^+$ associated to the category of partitions
$$D^1(k,l)=\left\{\pi\in P(k,l)\Big|T_\pi\in C(k,l)\right\}$$ 
where $C=(C(k,l))$ is the Tannakian category of $G$.
\end{definition}

At the level of the examples, most of the known homogeneous quantum groups $S_N\subset G\subset U_N^+$ are in fact easy. However, there are many non-easy examples as well, and we will compute the easy envelopes in several cases of interest, in sections 3-6 below. 

As a technical observation now, we can in fact generalize the above construction to any closed subgroup $G\subset U_N^+$, and we have the following result:

\begin{proposition}
Given a closed subgroup $G\subset U_N^+$, construct $D^1\subset P$ as above, and let $S_N\subset G^1\subset U_N^+$ be the easy quantum group associated to $D^1$. We have then
$$G^1=(<G,S_N>)^1$$
where $<G,S_N>\subset U_N^+$ is the smallest closed subgroup containing $G,S_N$.
\end{proposition}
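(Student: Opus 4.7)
The plan is to reduce the proposition to the Tannakian identity
$$C_{<G,S_N>}=C_G\cap C_{S_N},$$
where I write $C_K=(C_K(k,l))$ for the Tannakian category attached to a closed subgroup $K\subset U_N^+$. Set $H=<G,S_N>$. Granted this identity, the rest is routine: since $S_N$ is easy with partition category all of $P$, we have $C_{S_N}(k,l)=span(T_\pi\,|\,\pi\in P(k,l))$, so the membership $T_\pi\in C_{S_N}(k,l)$ is automatic for every $\pi\in P(k,l)$. Applying Definition 2.3 to $H$ therefore gives
$$D_H^1(k,l)=\{\pi\in P(k,l)\,|\,T_\pi\in C_G(k,l)\cap C_{S_N}(k,l)\}=\{\pi\in P(k,l)\,|\,T_\pi\in C_G(k,l)\}=D_G^1(k,l),$$
and since $G^1,H^1$ are by construction the easy quantum groups associated to $D_G^1,D_H^1$ respectively, this forces $G^1=H^1=(<G,S_N>)^1$.

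The main technical point is therefore the Tannakian identity $C_H=C_G\cap C_{S_N}$, and this is the step I expect to require most care. I would prove it via the soft form of Woronowicz duality, exactly as used in the proof of Theorem 1.7: tensor subcategories of $C_{U_N^+}$ are in inclusion-reversing bijection with closed subgroups of $U_N^+$. Since $C_G\cap C_{S_N}$ is again a tensor category, it corresponds to some closed subgroup $G'\subset U_N^+$; the inclusions $C_G\cap C_{S_N}\subset C_G$ and $C_G\cap C_{S_N}\subset C_{S_N}$ translate into $G\subset G'$ and $S_N\subset G'$, forcing $H\subset G'$ by minimality of $H$. Conversely $C_H\subset C_G$ and $C_H\subset C_{S_N}$ give $C_H\subset C_G\cap C_{S_N}$, hence $G'\subset H$ by the reverse direction of the bijection. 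Combining these inclusions yields $G'=H$, i.e.\ $C_H=C_G\cap C_{S_N}$, which closes the argument.

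The only subtlety worth flagging is that Proposition 2.1 and Definition 2.3 were originally stated for homogeneous $G$, but their proofs used only that $C_G$ is a tensor category and that $C_{U_N^+}\subset C_G$; both remain valid for an arbitrary closed subgroup $G\subset U_N^+$, so the construction of $D_G^1$ and of the associated easy quantum group $G^1$ makes sense in the generality of the statement. With that observation, the two-paragraph argument above delivers the proposition.
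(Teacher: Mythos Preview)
Your proof is correct and follows essentially the same route as the paper: both reduce the statement to the Tannakian identity $C_{<G,S_N>}=C_G\cap C_{S_N}$, then use that $T_\pi\in C_{S_N}$ is automatic for $\pi\in P$ to conclude that the $D^1$ categories for $G$ and $<G,S_N>$ coincide. The paper simply declares the Tannakian identity ``well-known and elementary'' via Woronowicz duality, whereas you spell out the inclusion-reversing bijection argument in detail; your explicit remark about extending the construction of $D^1$ beyond the homogeneous case is also left implicit in the paper.
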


\begin{proof}
It is well-known, and elementary to show, using Woronowicz's Tannakian duality results in \cite{wo2}, that the smallest subgroup $<G,S_N>\subset U_N^+$ from the statement exists indeed, and can be obtained by intersecting the Tannakian categories of $G,S_N$:
$$C_{<G,S_N>}=C_G\cap C_{S_N}$$

We conclude from this that for any $\pi\in P(k,l)$ we have:
$$T_\pi\in C_{<G,S_N>}(k,l)\iff T_\pi\in C_G(k,l)$$

It follows that the $D^1$ categories for the quantum groups $<G,S_N>$ and $G$ coincide, and so the easy envelopes $(<G,S_N>)^1$ and $G^1$ coincide as well, as stated.
\end{proof}

In order now to fine-tune all this, by using an arbitrary parameter $p\in\mathbb N$, which can be thought of as being an ``easiness level'', we can proceed as follows:

\begin{definition}
Given a quantum group $S_N\subset G\subset U_N^+$, and an integer $p\in\mathbb N$, we construct the family of linear spaces
$$E^p(k,l)=\left\{\alpha_1T_{\pi_1}+\ldots+\alpha_pT_{\pi_p}\in C(k,l)\Big|\alpha_i\in\mathbb C,\pi_i\in P(k,l)\right\}$$
and we denote by $C^p$ the smallest tensor category containing $E^p=(E^p(k,l))$, and by $S_N\subset G^p\subset U_N^+$ the quantum group corresponding to this category $C^p$.
\end{definition}

As a first observation, at $p=1$ we have $C^1=E^1=span(D^1)$, where $D^1$ is the category of partitions constructed in Proposition 2.1. Thus the quantum group $G^1$ constructed above coincides with the ``easy envelope'' of $G$, from Definition 2.3 above.

In the general case, $p\in\mathbb N$, the family $E^p=(E^p(k,l))$ constructed above is not necessarily a tensor category, but we can of course consider the tensor category $C^p$ generated by it, as indicated. Finally, in the above definition we have used of course Woronowicz's Tannakian duality results in \cite{wo2}, in order to perform the operation $C^p\to G^p$.

In practice, the construction in Definition 2.5 is often something quite complicated, and it is convenient to use the following observation:

\begin{proposition}
The category $C^p$ constructed above is generated by the spaces
$$E^p(l)=\left\{\alpha_1T_{\pi_1}+\ldots+\alpha_pT_{\pi_p}\in C(l)\Big|\alpha_i\in\mathbb C,\pi_i\in P(l)\right\}$$
where $C(l)=C(0,l),P(l)=P(0,l)$, with $l$ ranging over the colored integers.
\end{proposition}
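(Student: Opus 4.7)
The plan is to exploit Frobenius reciprocity (the ``rotation trick'') for Tannakian categories. The key fact I will use is that, for any word $k$ of colored integers, there is a linear bijection $C(k,l)\simeq C(\bar k l)$ obtained by precomposing with a tensor product of duality caps $T_{{\ }_\circ\cap_\bullet},T_{{\ }_\bullet\cap_\circ}$, where $\bar k$ denotes the reversal of $k$ with swapped colors. At the partition level this bijection sends each $T_\pi$ with $\pi\in P(k,l)$ to $T_{\tilde\pi}$ with $\tilde\pi\in P(\bar k l)$, where $\tilde\pi$ is obtained by folding the upper row of $\pi$ down; and crucially, this operation is linear on morphism spaces, being implemented by composition with a single fixed morphism.

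Let $C'$ denote the tensor category generated by the spaces $E^p(l)$. One inclusion is immediate: since $E^p(l)=E^p(0,l)\subset C^p$, we have $C'\subset C^p$. For the reverse inclusion, it suffices to show $E^p(k,l)\subset C'$ for arbitrary colored $k,l$, since $C^p$ is generated by the $E^p(k,l)$'s. So I take a generic element
$$T=\alpha_1T_{\pi_1}+\ldots+\alpha_pT_{\pi_p}\in E^p(k,l)$$
and apply the rotation to obtain
$$\tilde T=\alpha_1T_{\tilde\pi_1}+\ldots+\alpha_pT_{\tilde\pi_p}\in C(\bar k l).$$
By linearity of the rotation and the fact that it sends individual $T_\pi$'s to individual $T_{\tilde\pi}$'s, this is an honest $p$-term combination, so $\tilde T\in E^p(\bar k l)\subset C'$. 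Conversely, $T$ is recovered from $\tilde T$ by composing with duality caps; since the latter correspond to ${\ }_\circ\cap_\bullet,{\ }_\bullet\cap_\circ\in{\mathcal NC}_2$ and every tensor category in this Tannakian sense contains the duality morphisms, this recovery takes place inside $C'$. Hence $T\in C'$, as required.

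The only nontrivial point in this sketch is the linearity of the rotation at the partition level, i.e., that it sends a $p$-term linear combination to a $p$-term linear combination without any collapse or expansion. This should follow directly from the fact that rotation is implemented by a single fixed operator on morphism spaces, so I expect no real obstacle. The argument becomes completely formal once the Frobenius identification $C(k,l)\simeq C(\bar k l)$ is set up with appropriate care about colorings and reversal conventions, and the conclusion is that $C^p=C'$, which is the claim.
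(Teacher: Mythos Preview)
Your proof is correct and follows essentially the same route as the paper: both arguments use the Frobenius rotation $C(k,l)\simeq C(\bar kl)$, together with the compatible rotation $P(k,l)\simeq P(\bar kl)$ at the partition level, to identify $E^p(k,l)$ with $E^p(\bar kl)$ and thereby reduce the generating set to the spaces $E^p(l)$. Your version is slightly more explicit about why the inverse rotation stays inside $C'$ (it is implemented by the duality morphisms, which lie in every Tannakian category), whereas the paper simply records the equivalence $\alpha_1T_{\pi_1}+\ldots+\alpha_pT_{\pi_p}\in C(k,l)\iff\alpha_1T_{\rho_1}+\ldots+\alpha_pT_{\rho_p}\in C(\bar kl)$ and concludes; but the underlying mechanism is identical.
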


\begin{proof}
We use the well-known fact, from \cite{ntu}, \cite{wo1}, that given a closed subgroup $G\subset U_N^+$, we have a Frobenius type isomorphism $Hom(u^{\otimes k},u^{\otimes l})\simeq Fix(u^{\otimes\bar{k}l})$. If we apply this to the quantum group $G^p$ from Definition 2.5, we obtain an isomorphism $C(k,l)\simeq C(\bar{k}l)$.

On the other hand, we have as well an isomorphism $P(k,l)\simeq P(\bar{k}l)$, obtained by performing a counterclockwise rotation to the partitions $\pi\in P(k,l)$. According to the above definition of the spaces $E^p(k,l)$, this induces an isomorphism $E^p(k,l)\simeq E^p(\bar{k}l)$. 

We deduce from this that for any partitions $\pi_1,\ldots,\pi_p\in C(k,l)$, having rotated versions $\rho_1,\ldots,\rho_p\in C(\bar{k}l)$, and for any scalars $\alpha_1,\ldots,\alpha_p\in\mathbb C$, we have:
$$\alpha_1T_{\pi_1}+\ldots+\alpha_pT_{\pi_p}\in C(k,l)\iff\alpha_1T_{\rho_1}+\ldots+\alpha_pT_{\rho_p}\in C(\bar{k}l)$$

But this gives the conclusion in the statement, and we are done.
\end{proof}

The main properties of the construction $G\to G^p$ can be summarized as follows:

\begin{theorem}
Given a quantum group $S_N\subset G\subset U_N^+$, the quantum groups $G^p$ constructed above form a decreasing family, whose intersection is $G$:
$$G=\bigcap_{p\in\mathbb N}G^p$$
Moreover, $G$ is easy when this decreasing limit is stationary, $G=G^1$.
\end{theorem}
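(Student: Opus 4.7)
The plan is to reduce all three claims to statements about Tannakian categories via the antitone correspondence of Theorem 1.7, and then to verify them at the level of the generating families $E^p(k,l)$ from Definition 2.5.

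First, for the monotonicity $G^p\supset G^{p+1}$, I would simply observe that any linear combination $\alpha_1T_{\pi_1}+\ldots+\alpha_pT_{\pi_p}$ of length $p$ is also a combination of length $p+1$, by padding with a zero coefficient, so $E^p(k,l)\subset E^{p+1}(k,l)$; the tensor categories they generate therefore satisfy $C^p\subset C^{p+1}$, which gives $G^{p+1}\subset G^p$ by duality. The inclusion $G\subset G^p$ is equally direct: by construction $E^p(k,l)\subset C(k,l)$, and since $C$ is itself a tensor category containing $E^p$, the minimality of $C^p$ forces $C^p\subset C$, hence $G\subset G^p$.

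The main step is the intersection identity $G=\bigcap_p G^p$, which dually amounts to showing that the tensor category generated by $\bigcup_p C^p$ is all of $C$. The nontrivial direction is $C(k,l)\subset\bigcup_p C^p(k,l)$ at each level, and this is where the upper bound $C\subset\mathrm{span}(T_\pi|\pi\in P)$ supplied by Theorem 1.7 does the real work: every $T\in C(k,l)$ is a \emph{finite} linear combination $T=\sum_{i=1}^p\alpha_iT_{\pi_i}$ with $\pi_i\in P(k,l)$, so $T$ already lies in $E^p(k,l)\subset C^p(k,l)$ for that particular value of $p$. Thus $\bigcup_p C^p$ equals $C$ pointwise, and since $C$ is already a tensor category no further closure is needed; combined with the standard fact that the Tannakian category of $\bigcap_p G^p$ is the tensor category generated by $\bigcup_p C^p$, this yields $\bigcap_p G^p=G$.

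The last assertion is then immediate from Proposition 2.2(3), once we identify $G^1$ in the present notation with the easy envelope of Definition 2.3, as already observed in the remark following Definition 2.5. I do not expect any serious obstacle: the argument is essentially bookkeeping between $E^p$, $C^p$ and $C$, with the one genuine input being the upper bound $C\subset\mathrm{span}(P)$ from Theorem 1.7, which is precisely what forces the finite-length truncations $E^p$ to exhaust $C$.
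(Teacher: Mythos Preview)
Your proposal is correct and follows essentially the same approach as the paper: both arguments establish the increasing filtration $C^p\subset C^{p+1}\subset C$ at the Tannakian level, use the inclusion $C\subset\mathrm{span}(T_\pi\mid\pi\in P)$ from Theorem~1.7 to see that every element of $C(k,l)$ already lies in some $E^p(k,l)$, conclude $C=\bigcup_p C^p$, and then pass to the quantum group side by duality. Your write-up is in fact more explicit than the paper's, spelling out the padding argument for $E^p\subset E^{p+1}$, the minimality argument for $C^p\subset C$, and the role of Theorem~1.7 in the exhaustion step, whereas the paper compresses all of this into a couple of lines.
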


\begin{proof}
By definition of $E^p(k,l)$, and by using Proposition 2.2, these linear spaces form an increasing filtration of $C(k,l)$. The same remains true when completing into tensor categories, and so we have an increasing filtration, as follows:
$$C=\bigcup_{p\in\mathbb N}C^p$$

At the quantum group level now, we obtain the decreasing intersection in the statement. Finally, the last assertion is clear from Proposition 2.2.
\end{proof}

As a main consequence of the above results, we can now formulate: 

\begin{definition}
We say that a quantum group $S_N\subset G\subset U_N^+$ is easy at order $p$ when $G=G^p$. When $p$ is chosen minimal, we also say that $G$ has easiness level $p$.
\end{definition}

Observe that the order 1 notion corresponds to the usual easiness. In general, all this is quite abstract, and requires some explicit examples, in order to be understood. 

\section{Unitary groups}

In order to work out some explicit examples, let us first look at the classical case, $S_N\subset G\subset U_N$. The first thought goes to $SU_N$, but this group fails to be homogeneous, because it contains the alternating group $A_N$, but not $S_N$ itself. However, we have:

\begin{proposition}
Given a number $d\in\mathbb N\cup\{\infty\}$, consider the group
$$U_N^d=\left\{g\in U_N\Big|\det g\in\mathbb Z_d\right\}$$
where $\mathbb Z_d$ is the group of $d$-th roots of unity. If $2|d$, this group is homogeneous.
\end{proposition}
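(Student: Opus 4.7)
The statement splits cleanly into two independent checks: that $U_N^d$ is a closed subgroup of $U_N$, and that the permutation matrices lie inside it when $2\mid d$. I would handle them in that order.

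For the subgroup property, the plan is to observe that the determinant $\det:U_N\to\mathbb T$ is a continuous group homomorphism and that $\mathbb Z_d\subset\mathbb T$ is a closed subgroup (a finite cyclic subgroup for $d\in\mathbb N$, and under the convention $\mathbb Z_\infty=\bigcup_{d\in\mathbb N}\mathbb Z_d$ or $\mathbb Z_\infty=\mathbb T$, still a closed subgroup). Hence $U_N^d=\det^{-1}(\mathbb Z_d)$ is a closed subgroup of $U_N$, with no further work required.

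For homogeneity, I would recall that the embedding $S_N\subset U_N$ sends a permutation $\sigma$ to its permutation matrix $P_\sigma$, and that $\det P_\sigma=\mathrm{sgn}(\sigma)\in\{-1,+1\}$. The condition $S_N\subset U_N^d$ therefore reduces to asking that $\{-1,+1\}\subset\mathbb Z_d$. Since $1\in\mathbb Z_d$ automatically, the only real condition is $-1\in\mathbb Z_d$, i.e.\ $(-1)^d=1$, which is exactly the hypothesis $2\mid d$ (and in the $d=\infty$ case it is automatic). Combining the two points gives $S_N\subset U_N^d\subset U_N$, which is the definition of homogeneity from Definition 1.3.

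There is no real obstacle here; the only point requiring a small amount of care is the interpretation of the symbol $\mathbb Z_\infty$, and I would just spell out the convention once at the beginning of the argument so that the case $d=\infty$ is not ambiguous. Everything else is a direct verification using the standard formula for the determinant of a permutation matrix.
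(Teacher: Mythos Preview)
Your proof is correct and follows essentially the same approach as the paper: the paper's argument simply recalls that the permutation matrix of $\sigma$ has determinant $\varepsilon(\sigma)\in\{\pm1\}=\mathbb Z_2$, and since $2\mid d$ gives $\mathbb Z_2\subset\mathbb Z_d$, homogeneity follows. One small correction to your aside on conventions: the set $\bigcup_{d\in\mathbb N}\mathbb Z_d$ of all roots of unity is dense but \emph{not} closed in $\mathbb T$, so only the convention $\mathbb Z_\infty=\mathbb T$ (which is the one the paper uses) makes $U_N^\infty$ a closed subgroup.
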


\begin{proof}
We recall from section 1 above that the embedding $S_N\subset U_N$ that we use is the one given by the usual permutation matrices, $\sigma(e_i)=e_{\sigma(i)}$. Thus the determinant of a permutation $\sigma\in S_N$ is its signature, $\varepsilon(\sigma)\in\mathbb Z_2$, and this gives the result.
\end{proof}

In what follows we will be mostly interested in the case $2|d$. However, the value $d=1$ is interesting and useful as well, because we have inclusions, as follows:
$$SU_N=U_N^1\subset U_N^d\subset U_N^\infty=U_N$$

By functoriality, we therefore obtain inclusions of categories, as follows:
$$C_{U_N}\subset C_{U_N^d}\subset C_{SU_N}$$

The group $U_N$ is well-known to be easy, its category being given by  $C_{U_N}=span(\mathcal P_2)$, where $\mathcal P_2$ is the category of the matching pairings. The representation theory of $SU_N$ is well-known as well, in diagrammatic terms, for instance from \cite{wo2}. 

Regarding now $U_N^d$, with $d\in\mathbb N\cup\{\infty\}$ being arbitrary, we have here:

\begin{proposition}
The Tannakian category of $U_N^d$ appears as a part of the Tannakian category of $SU_N$, obtained by restricting the attention to the spaces $C(k,l)$ with $\underline{k}=\underline{l}(d)$, where $\underline{k}$ is the number $\#\circ-\#\bullet$, computed over all the symbols of $k$.
\end{proposition}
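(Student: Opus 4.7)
The plan is to prove this by combining two standard Tannakian ingredients: the functoriality of the Tannakian correspondence, and the explicit action of a well-chosen central scalar subgroup of $U_N^d$. I would split the proof into two steps, one for the ambient inclusion into $C_{SU_N}$, and one for the congruence constraint on the support.

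For the first step, I observe that $SU_N = U_N^1 \subset U_N^d$ as closed subgroups of $U_N$, so contravariant functoriality of Tannakian duality — the same mechanism already exploited in Theorem 1.7 to reconstruct $G$ from $C(G)$ — yields an inclusion $C_{U_N^d}(k,l) \subset C_{SU_N}(k,l)$ for every colored pair $(k,l)$. Consequently $C_{U_N^d}$ sits as a sub-tensor-category inside $C_{SU_N}$.

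For the second step, I pick out the cyclic central subgroup $\mathbb Z_d \cdot I_N \subset U_N^d$; this is indeed a subgroup, because for every $\zeta \in \mathbb Z_d$ we have $\det(\zeta I_N) = \zeta^N \in \mathbb Z_d$, since $\mathbb Z_d$ is closed under multiplication. Any $T \in C_{U_N^d}(k,l)$ must intertwine the action of $\zeta I_N$, and by the very definition of the colored tensor powers each $\circ$-string contributes a factor $\zeta$ while each $\bullet$-string contributes $\bar\zeta = \zeta^{-1}$. Hence $(\zeta I_N)^{\otimes k} = \zeta^{\underline k}\,\mathrm{id}$ and similarly at level $l$, so the intertwining condition reduces to $\zeta^{\underline l - \underline k}\,T = T$ for every $\zeta \in \mathbb Z_d$. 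This forces either $T = 0$ or $d \mid \underline l - \underline k$, i.e.\ $\underline k \equiv \underline l \pmod d$, which is precisely the support condition in the statement.

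Combining the two steps, $C_{U_N^d}$ sits inside $C_{SU_N}$ and is supported on those pairs $(k,l)$ with $\underline k \equiv \underline l \pmod d$, realising it as the indicated restricted part of $C_{SU_N}$. The only place where care is required is in Step 2: one has to locate the correct central subgroup so that the exponent matches $d$ exactly; once that subgroup is identified, the rest is a direct computation with the standard scalar action on colored tensor powers. I expect no further obstacle, since the argument specialises at $d = \infty$ to the classical constraint $\underline k = \underline l$ which characterises the intertwiners of $U_N$ as a restricted part of those of $SU_N$.
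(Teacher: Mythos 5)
Your two steps are individually correct, and Step 2 is in substance the same computation as the final step of the paper's proof (the scalars $\zeta I_N$ with $\zeta\in\mathbb Z_d$ are exactly the paper's $\lambda^j=w^{Nj}$ with $w=e^{2\pi i/Nd}$). However, your argument only establishes one of the two inclusions implicit in the statement: you show that $C_{U_N^d}(k,l)\subset C_{SU_N}(k,l)$ always, and that $C_{U_N^d}(k,l)=0$ unless $\underline{k}=\underline{l}\,(d)$. The proposition asserts that the Tannakian category of $U_N^d$ \emph{is} the restricted part of that of $SU_N$, so you also need the converse on the allowed blocks: every $SU_N$-intertwiner with $\underline{k}=\underline{l}\,(d)$ is already a $U_N^d$-intertwiner. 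This is where the paper's proof puts its main effort, via the coset decomposition $U_N^d=SU_N\sqcup wSU_N\sqcup\ldots\sqcup w^{d-1}SU_N$ with $w=e^{2\pi i/Nd}$: writing every element of $U_N^d$ as a scalar times a special unitary reduces the $U_N^d$-intertwining condition to the $SU_N$-condition plus a scalar condition, and it is this reduction that turns your one-way implication into an equivalence. That decomposition is entirely absent from your argument, so as written you have proved containment in the restricted category, not the identification claimed.

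A related point of care: the scalar matrices actually contained in $U_N^d$ are the $\mu I_N$ with $\mu^N\in\mathbb Z_d$, i.e.\ $\mu\in\mathbb Z_{Nd}$, of which your subgroup $\mathbb Z_d\cdot I_N$ is only a part. For the vanishing direction your smaller subgroup suffices (it yields the weaker but sufficient constraint $d\mid\underline{l}-\underline{k}$), but for the converse direction the full scalar subgroup $\mathbb Z_{Nd}\cdot I_N$ is the one that intervenes, and reconciling the resulting congruence modulo $Nd$ with the stated congruence modulo $d$ requires using that a nonzero $SU_N$-intertwiner already forces $N\mid\underline{l}-\underline{k}$ (via the center $\mathbb Z_N\cdot I_N\subset SU_N$). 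None of this bookkeeping appears in your write-up, and it is precisely the part of the argument where the arithmetic of $N$ and $d$ genuinely matters.
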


\begin{proof}
Our first claim is that in the finite case, $d<\infty$, we have a disjoint union decomposition as follows, where $w=e^{2\pi i/Nd}$:
$$U_N^d=SU_N\ \sqcup\ wSU_N\ \sqcup\ w^2SU_N\ \sqcup\ldots\sqcup\ w^{d-1}SU_N$$

Indeed, we have $w^N=e^{2\pi i/d}$, and so the condition $\det g\in\mathbb Z_d$ from Proposition 3.1 means $\det g=w^{Nk}$, for some $k\in\{0,1,\ldots,d-1\}$, and our claim follows from:
$$\det g=w^{Nk}\iff \det\left(\frac{g}{w^k}\right)=1\iff\frac{g}{w^k}\in SU_N\iff g\in w^kSU_N$$

Now given $g\in U_N$, $\xi\in(\mathbb C^N)^{\otimes k}$ and $\lambda\in\mathbb C$, consider the following conditions:
$$g^{\otimes k}\xi=\xi\quad,\quad (\lambda g)^{\otimes k}\xi=\xi\quad,\ \ldots\ , 
\quad (\lambda^{d-1}g)^{\otimes k}\xi=\xi$$

These conditions are then equivalent to $g^{\otimes k}\xi=\xi$ and $\lambda^k=1$. Now by taking $g\in SU_N$ and $\lambda=w^N$, with $w=e^{2\pi i/Nd}$ being as above, this gives the result.

Finally, the assertion at $d=\infty$ can be proved in a similar way.
\end{proof}

Summarizing, the Tannakian category of $U_N^d$ appears as a part of the category computed in \cite{wo2}, and the value $d=\infty$, corresponding to $U_N$ itself, which is easy, is special. It is of course possible to go beyond this remark, but we will not need this here.

The easy envelope of $U_N^d$ can be computed as follows:

\begin{proposition}
The easy envelope of $U_N^d$ is $U_N$, for any $d\geq1$.
\end{proposition}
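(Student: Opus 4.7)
The goal is to show $G^1 = U_N$, where $G^1$ is the easy envelope of $G = U_N^d$, associated to the category $D^1 = \{\pi \in P : T_\pi \in C_{U_N^d}\}$. Since $U_N$ is easy with category of partitions $\mathcal{P}_2$, the claim is equivalent to the equality of Tannakian categories $\mathrm{span}(D^1) = \mathrm{span}(\mathcal{P}_2) = C_{U_N}$. The easy inclusion $\mathrm{span}(\mathcal{P}_2) \subseteq \mathrm{span}(D^1)$ follows immediately from $U_N^d \subseteq U_N$: this gives $C_{U_N} \subseteq C_{U_N^d}$, so every $\pi \in \mathcal{P}_2$ satisfies $T_\pi \in C_{U_N^d}$ and lies in $D^1$. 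The content is therefore the reverse inclusion: every $\pi \in P(k,l)$ with $T_\pi \in C_{U_N^d}$ satisfies $T_\pi \in C_{U_N}$.

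To prove this, I first reduce to the case $\pi \in P(l)$ with $T_\pi \in \mathrm{Fix}(u^{\otimes l})$ using the Frobenius isomorphism $\mathrm{Hom}(u^{\otimes k}, u^{\otimes l}) \simeq \mathrm{Fix}(u^{\otimes \bar{k}l})$, under which $T_\pi$ corresponds to $T_{\tilde\pi}$ for $\tilde\pi$ the counterclockwise rotation of $\pi$. Next, I exploit the chain $SU_N \subseteq U_N^d$, which gives $C_{U_N^d} \subseteq C_{SU_N}$; in particular $T_\pi$ is invariant under the maximal torus $T^{N-1} = \{\mathrm{diag}(\lambda_1, \ldots, \lambda_N) : \prod_i \lambda_i = 1\} \subseteq SU_N$. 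For each block $B$ of $\pi$, let $d_B = \#\{\circ\text{-points of }B\} - \#\{\bullet\text{-points of }B\}$. Expanding $T_\pi$ on the standard basis, torus invariance translates into the following condition: for every map $c : \mathrm{Blocks}(\pi) \to \{1, \ldots, N\}$, the character $\lambda \mapsto \prod_i \lambda_i^{m_i(c)}$, with $m_i(c) = \sum_{c(B) = i} d_B$, is trivial on $T^{N-1}$, equivalently $m_1(c) = \cdots = m_N(c)$.

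The main step, which is also the main obstacle, is a short combinatorial extraction: for $N \geq 3$, choose $c$ sending a designated block $B^\ast$ to color $1$, every other block to color $2$, and leaving color $3$ unused; then $m_1 = d_{B^\ast}$ and $m_3 = 0$, forcing $d_{B^\ast} = 0$, and by varying $B^\ast$ one gets $d_B = 0$ for every block. Summing over blocks gives $\underline{l} = \sum_B d_B = 0$, so the scalar matrices $\lambda I \in \mathbb{T}$ act trivially on $T_\pi$, because $(\lambda I)^{\otimes l} T_\pi = \lambda^{\underline{l}} T_\pi = T_\pi$. Combined with $SU_N$-invariance and the factorization $U_N = \mathbb{T} \cdot SU_N$, this gives $T_\pi \in C_{U_N}$, as required. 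The cases $N \leq 2$ would require a separate argument (the torus analysis leaves more freedom, and for $N=2$ the $\varepsilon$-tensor creates potential extra $SU_2$-invariants that must be ruled out against the non-negativity of $T_\pi$), but they are presumably covered by the standing large-$N$ convention implicit in the paper.
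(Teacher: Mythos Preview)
Your argument is correct and takes a cleaner route than the paper. Both proofs reduce (via $SU_N\subset U_N^d$) to showing that if $T_\pi$ is $SU_N$-invariant then it is $U_N$-invariant, and both finish this off via $U_N=\mathbb T\cdot SU_N$ once one knows $\underline{l}=0$. The difference lies in how $\underline{l}=0$ is obtained: the paper assumes $\underline{l}\neq0$ and runs an explicit elimination --- first arguing that $\xi_\pi\otimes\xi_\pi$ is $U_N$-invariant to force $\pi$ to be a pairing, then permuting legs to a normal form, and finally computing $g^{\otimes k}\xi_\pi$ to see that $gg^t$ would have to be scalar. Your torus-weight argument bypasses all of this machinery in one stroke, and is more conceptual.

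Two remarks. First, you extract more than you need: the conclusion $d_B=0$ for every block is stronger than the required $\underline{l}=\sum_B d_B=0$, and it is this stronger extraction that forces you to assume $N\geq3$ (to have a third colour available). If instead you take the constant colouring $c\equiv1$, you get $m_1(c)=\underline{l}$ and $m_2(c)=0$, hence $\underline{l}=0$ directly --- and this needs only $N\geq2$. So your worry about $N=2$ is unfounded; the $\varepsilon$-tensor issue never arises because your own argument, once streamlined, already covers that case. Second, there is no ``large-$N$ convention'' in the paper: its proof (via the $gg^t$ computation) genuinely works for all $N\geq2$, and so does yours after the simplification above. The case $N=1$ is degenerate and not really in scope.
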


\begin{proof}
By functoriality, we can restrict the attention to the case of $U_N^1=SU_N$. We have to prove that the following implication holds:
$$\pi\in P(k),\xi_\pi\in Fix(g^{\otimes k}),\forall g\in SU_N\implies\pi\in\mathcal P_2(k)$$

For this purpose, we will basically use the isomorphism of projective versions $PSU_N=PU_N$. To be more precise, let us start with the following simple fact:
$$g^{\otimes k}\xi_\pi=\xi_\pi\implies(wg)^{\otimes k}\xi_\pi=w^k\xi_\pi,\forall w\in\mathbb T$$

In relation with the above implication, we have two cases, as follows:

\underline{Case $\underline{k}=0$}. Here the condition $\underline{k}=0$ means by definition that $k$ has the same number of black and white legs. Thus in the above formula we have $w^k=1$, and we obtain:
$$g^{\otimes k}\xi_\pi=\xi_\pi,\forall g\in SU_N\implies h^{\otimes k}\xi_\pi=\xi_\pi,\forall h\in U_N$$

We can therefore conclude by using the Brauer result for $U_N$, which states that the vectors $\xi_\pi$ on the right are those appearing from the partitions $\pi\in\mathcal P_2(k)$.

\underline{Case $\underline{k}\neq0$}. Here we must prove that a partition $\pi\in P(k)$ as above does not exist. In order to do so, observe first that, since $w^{\underline{k}}=\bar{w}^k$, we obtain:
$$g^{\otimes k}\xi_\pi=\xi_\pi,\forall g\in SU_N\implies h^{\otimes k\bar{k}}(\xi_\pi\otimes\xi_\pi)=(\xi_\pi\otimes\xi_\pi),\forall h\in U_N$$

But this shows that $\xi_\pi\otimes\xi_\pi$ must come from a pairing, and so $\xi_\pi$ itself must come from a pairing. Thus, as a first conclusion, we must have $\pi\in P_2(k)$.

Since the standard coordinates $u_{ij}$ of our group $SU_N$ commute, we can permute if we want the legs of this pairing, and we are left with a pairing of type $\pi=\cap\cap\ldots\cap$. Now if we take into account the labels, by further permuting the legs we can assume that we are in the case $\pi=[\alpha\beta\gamma]$, where $\alpha,\beta,\gamma$ are all pairings of type $\cap\cap\ldots\cap$, with $\alpha$ being white, $\beta$ being black, and $\gamma$ being matching. Moreover, by using the Brauer result for $U_N$, the invariance condition is trivially satisfied for $\gamma$, so we can assume $\gamma=\emptyset$.

Summarizing, we are now in the case $\pi=[\alpha\beta]$, with $\alpha,\beta$ being both of type $\cap\cap\ldots\cap$, and with $\alpha$ being white, and $\beta$ being black. With $\alpha=2r$ and $\beta=2s$, we have:
$$\xi_\pi=\sum_{i_1\ldots i_r}\sum_{j_1\ldots j_s}e_{i_1}\otimes e_{i_1}\otimes\ldots\otimes e_{i_r}\otimes e_{i_r}\otimes e_{j_1}\otimes e_{j_1}\otimes\ldots\otimes e_{j_s}\otimes e_{j_s}$$

An arbitrary matrix $g\in SU_N$ acts in the following way on this vector:
\begin{eqnarray*}
g^{\otimes k}\xi_\pi
&=&\sum_{i_1\ldots i_r}\sum_{j_1\ldots j_s}(gg^t)_{a_1b_1}\ldots(gg^t)_{a_rb_r}(\bar{g}g^*)_{c_1d_1}\ldots(\bar{g}g^*)_{c_sd_s}\\
&&e_{a_1}\otimes e_{b_1}\otimes\ldots\otimes e_{a_r}\otimes e_{b_r}\otimes e_{c_1}\otimes e_{d_1}\otimes\ldots\otimes e_{c_s}\otimes e_{d_s}
\end{eqnarray*}

Thus, in order to have $g^{\otimes k}\xi_\pi=\xi_\pi$, the matrix $gg^t$ must be a scalar multiple of the identity. Now since this latter condition is not satisfied by any $g\in SU_N$, the formula $g^{\otimes k}\xi_\pi=\xi_\pi$ does not hold in general, and so our partition $\pi$ does not exist, as desired.
\end{proof}

In order to compute now the easiness level of $U_N^d$, we will need some general theory.

We recall from section 1 that each discrete group $\Gamma=<g_1,\ldots,g_N>$ produces a compact quantum group $G=\widehat{\Gamma}$, by setting $C(G)=C^*(\Gamma)$ and $u=diag(g_1,\ldots,g_N)$. The presentation relations $g_{i_1}\ldots g_{i_r}=1$ which define $\Gamma$ correspond then to certain fixed vectors $\xi\in Fix(u^{\otimes r})$. In particular, the presentation level of $\Gamma$, in the group-theoretical sense, is the smallest integer $r\in\mathbb N\cup\{\infty\}$ such that $C=<Fix(u^{\otimes r})>$.

This observation suggests the following definition:

\begin{definition}
Given a closed subgroup $G\subset U_N^+$, with associated Tannakian category $C=(C(k,l))$, the presentation level of its discrete quantum group dual $\Gamma=\widehat{G}$ is the smallest number $r\in\mathbb N\cup\{\infty\}$ such that $C=<Fix(u^{\otimes r})>$.
\end{definition}

As a first observation, in the group dual case, $G=\widehat{\Gamma}$, we obtain indeed the presentation level of $\Gamma$. Indeed, with $u=\sum_ig_i\otimes e_{ii}$ we have $u^{\otimes r}=\sum_{i_1\ldots i_r}g_{i_1}\ldots g_{i_r}\otimes e_{i_1\ldots i_r,i_1\ldots i_r}$, and so a vector $\xi=\sum_{i_1\ldots i_r}\lambda_{i_1\ldots i_r}e_{i_1\ldots i_r}$ is fixed precisely when:
$$\lambda_{i_1\ldots i_r}\neq1\implies g_{i_1}\ldots g_{i_r}=1$$

Thus, just by using vectors $\xi\in(\mathbb C^N)^{\otimes r}$ having $0-1$ entries, we can obtain in this way all the length $r$ relations presenting $\Gamma$. In general now, a matrix $T\in\mathcal L((\mathbb C^N)^{\otimes s},(\mathbb C^N)^{\otimes r})$ belongs to the intertwiner space $Hom(u^{\otimes s},u^{\otimes r})$ precisely we have:
$$T_{i_1\ldots i_r,j_1\ldots j_s}\neq1\implies g_{i_1}\ldots g_{i_r}=g_{j_1}\ldots g_{j_s}$$

Thus, just by using matrices with $0-1$ entries, we obtain the relations presenting $\Gamma$.

Let us recall as well that the Bell numbers $B_r=1,2,5,15,52,\ldots$ count the partitions in $P_r$. These numbers are well-known, but there is no explicit formula for them.

With these conventions, we have the following result:

\begin{proposition}
Consider a homogeneous quantum group $S_N\subset G\subset U_N^+$, and denote by $\Gamma=\widehat{G}$ its discrete quantum group dual.
\begin{enumerate}
\item If $\Gamma$ has presentation level $r<\infty$ then $G$ has easiness level $p\leq B_r$.

\item In particular, if $\Gamma$ is finitely presented, then $G$ has finite easiness level.
\end{enumerate}
\end{proposition}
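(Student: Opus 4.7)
The plan is to prove (1) directly, from which (2) follows immediately. By Proposition 2.6, the category $C^{B_r}$ defining $G^{B_r}$ is generated as a tensor category by the spaces $E^{B_r}(l)$. Since $C^p\subset C$ always holds by construction, the equality $G=G^{B_r}$ is equivalent to the inclusion $C\subset C^{B_r}$. Using the presentation level hypothesis $C=\langle\mathrm{Fix}(u^{\otimes r})\rangle$, it therefore suffices to establish
$$\mathrm{Fix}(u^{\otimes r})\subset E^{B_r}(r).$$

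For this, I would invoke homogeneity. The inclusion $S_N\subset G$ gives the inclusion of fixed-vector spaces $\mathrm{Fix}_G(u^{\otimes r})\subset\mathrm{Fix}_{S_N}(u^{\otimes r})$, and the Brauer-type description of $S_N$ recalled just before Theorem 1.7 identifies the latter space with $\mathrm{span}(T_\pi\mid\pi\in P(r))$. Since by definition of the Bell numbers we have $|P(r)|=B_r$, every element of this span is automatically a linear combination of at most $B_r$ partition vectors $T_\pi$ with $\pi\in P(r)$. Combined with the fact that the elements of $\mathrm{Fix}_G(u^{\otimes r})$ lie in $C(r)$ by definition, this places $\mathrm{Fix}(u^{\otimes r})$ inside $E^{B_r}(r)$, as required. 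Therefore $C^{B_r}$ contains $\mathrm{Fix}(u^{\otimes r})$, hence contains the tensor category this set generates, namely $C$. Tannakian duality then yields $G=G^{B_r}$, proving (1).

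Part (2) is then immediate: if $\Gamma$ is finitely presented, then all defining relations have length at most some $r<\infty$, so they are encoded inside $\mathrm{Fix}(u^{\otimes r})$, giving $\Gamma$ a finite presentation level in the sense of Definition 3.5, and (1) produces the finite easiness level bound $p\leq B_r$.

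There is no serious obstacle here; the whole argument is essentially a bookkeeping consequence of Proposition 2.6 combined with homogeneity. The only point deserving a line of care in the writeup is the interpretation of $u^{\otimes r}$ when $r$ is understood as a colored integer: in that case one still has $|P(r)|=B_r$ since the coloring does not affect the number of set partitions on $r$ points, so the same bound $B_r$ applies uniformly across all colorings of total length $r$.
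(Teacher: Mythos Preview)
Your proof is correct and follows essentially the same route as the paper: both arguments reduce to the observation that homogeneity forces $C(r)\subset\mathrm{span}(T_\pi\mid\pi\in P(r))$, and since $|P(r)|=B_r$ this gives $C(r)=E^{B_r}(r)$, whence the presentation-level hypothesis yields $C=C^{B_r}$. The only cosmetic difference is that you invoke Proposition 2.6 to work directly with $\mathrm{Fix}$ spaces, whereas the paper phrases things in terms of $E^p(k,l)$ and the Frobenius isomorphism; your remark on the coloring not affecting $|P(r)|$ is exactly the point needed to make this uniform.
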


\begin{proof}
We use the well-known fact, from \cite{wo1}, that we have a Frobenius type isomorphism $Hom(u^{\otimes k},u^{\otimes l})\simeq Fix(u^{\otimes k\bar{l}})$, where $l\to\bar{l}$ is the conjugation of the colored integers. We will use as well the related isomorphism $P(k,l)\simeq P(k\bar{l})$, obtained by rotating.

(1) Let $r=|k|+|l|$ and $p=B_r$, where $k\to|k|$ is the lenght of the colored integers. Since there are exactly $B_r$ elements in $P(k,l)$, we have:
$$\left\{\alpha_1T_{\pi_1}+\ldots+\alpha_pT_{\pi_p}\in C(k,l)\Big|\alpha_i\in\mathbb C,\pi_i\in P(k,l)\right\}=span\left(T_\pi\Big|\pi\in P(k,l)\right)$$

Now since the space on the right includes $Hom(u^{\otimes k},u^{\otimes l})$, we obtain from this, according to the defining formula for the spaces $E^p(k,l)$, from Definition 2.5:
$$E^p(k,l)=Hom(u^{\otimes k},u^{\otimes l})$$

According to our presentation level assumption, this linear space generates $C$. Thus we obtain $C=C^p$, and so we have $G=G^p$, which has easiness level $\leq p$, as desired.

(2) This is clear from (1).
\end{proof}

Now by getting back to our groups $U_N^d$, we obtain:

\begin{theorem}
The easiness level of $U_N^d$ is finite, $l\leq B_{Nd}$.
\end{theorem}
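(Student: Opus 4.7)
The plan is to derive the bound directly from Proposition 3.5 (1), which gives $l \leq B_r$ as soon as the discrete dual $\Gamma = \widehat{U_N^d}$ has presentation level $r$. The task thus reduces to exhibiting generating intertwiners for the Tannakian category $C_{U_N^d}$ of colored length at most $Nd$.

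I would describe $U_N^d \subset U_N^+$ by two families of relations. The commutativity relations defining $U_N \subset U_N^+$ are encoded by the flip-type intertwiners in $Hom(u^{\otimes 2}, u^{\otimes 2})$, hence by fixed vectors of colored length $4$. The remaining determinant relation $\det(u)^d = 1$ that cuts out $U_N^d$ inside $U_N$ is then encoded by a single fixed vector of degree $Nd$: setting
$$\xi = \sum_{\sigma \in S_N} \mathrm{sgn}(\sigma)\, e_{\sigma(1)} \otimes \cdots \otimes e_{\sigma(N)} \in (\mathbb{C}^N)^{\otimes N},$$
the standard identity $u^{\otimes N}\xi = \det(u)\, \xi$ yields $u^{\otimes Nd}(\xi^{\otimes d}) = \det(u)^d\, \xi^{\otimes d}$, so that the condition $\det(u)^d = 1$ is equivalent to $\xi^{\otimes d} \in Fix(u^{\otimes Nd})$.

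The main step requiring care is showing that the Tannakian category generated by $C_{U_N}$ together with this single vector $\xi^{\otimes d}$ equals $C_{U_N^d}$, and not the category of some strictly larger intermediate quantum group. One inclusion is immediate from the computation above, while the other follows from the soft Tannakian duality of \cite{mal} used in Theorem 1.7: the $C^*$-quotient of $C(U_N)$ by the ideal enforcing $\xi^{\otimes d} \in Fix(u^{\otimes Nd})$ coincides with the quotient by the scalar relation $\det(u)^d - 1$, since both ideals have the same common zero locus inside $U_N$. Once this identification is in place, every generator of $C_{U_N^d}$ over $C_{U_N^+}$ lies in colored length at most $\max(4, Nd) = Nd$ (the only homogeneous corner case $(N,d) = (1,2)$ giving $U_1^2 = \mathbb{Z}_2$, which is trivially easy), so the presentation level of $\widehat{U_N^d}$ is at most $Nd$, and Proposition 3.5 (1) delivers $l \leq B_{Nd}$ as claimed.
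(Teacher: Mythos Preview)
Your proposal is correct and follows essentially the same route as the paper: both argue that the relation $(\det u)^d=1$ corresponds to a fixed vector in $Fix(u^{\otimes Nd})$, so that the presentation level of $\widehat{U_N^d}$ is at most $Nd$, and then invoke Proposition~3.5. You supply more detail than the paper does---the explicit antisymmetrizer $\xi$, the identification of the two ideals via their common zero locus in $U_N$, and the degenerate small-$Nd$ case---but the underlying argument is the same.
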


\begin{proof}
We know that $U_N^d\subset U_N$ appears via the relation $(\det g)^d=1$, which reads:
$$\left(\sum_{\sigma\in S_N}\varepsilon(\sigma)g_{1\sigma(1)}\ldots g_{N\sigma(N)}\right)^d=1$$

This relation is a certain linear combination of entries of $g^{\otimes Nd}$, and so corresponds to a certain fixed vector $\xi\in Fix(u^{\otimes Nd})$. Thus, the presentation level of $U_N^d$ is finite, $l\leq Nd$, and the result follows from the general estimate from Proposition 3.5 above.
\end{proof}

The above estimate is of course something quite theoretical. It is probably possible to obtain much better bounds at $N=2$, but we have no results here.

\section{Reflection groups}

We discuss here some further classical examples, this time of discrete nature.

Given a number $s\in\mathbb N\cup\{\infty\}$ we can construct the group $H_N^s=S_N\wr\mathbb Z_s$ of monomial matrices with nonzero entries belonging to the group of $s$-th roots of unity $\mathbb Z_s$, with the usual convention $\mathbb Z_\infty=\mathbb T$. We have the following result, from \cite{fbl}:

\begin{proposition}
The group $H_N^s=S_N\wr\mathbb Z_s$ is easy, with the corresponding category of partitions $P^s$ consisting of the partitions having the property that each block, when weighted according to the rules $\circ\to +,\bullet\to -$, has as size a multiple of $s$. 
\end{proposition}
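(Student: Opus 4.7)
My plan is to verify easiness by establishing the fixed-vector formula
$$Fix(u^{\otimes k})=span\left(T_\pi\Big|\pi\in P^s(k)\right)$$
for every colored integer $k$, since this suffices to reconstruct all the intertwiner spaces $Hom(u^{\otimes k},u^{\otimes l})$ via the Frobenius duality used in the proof of Proposition 2.6. Throughout, I would parametrize $g\in H_N^s$ as $(\sigma,w)$ with $\sigma\in S_N$ and $w\in\mathbb Z_s^N$, acting by $ge_i=w_ie_{\sigma(i)}$, so that the diagonal subgroup $\mathbb Z_s^N\subset H_N^s$ is readily available for the converse.

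First I would check that $P^s$ is a category of partitions. The four generating identity and duality partitions each consist of a single block of signed size $0$, so they belong to $P^s$. Horizontal concatenation preserves blocks individually, and the upside-down reflection with color switch inverts each leg's sign twice, so both operations preserve the divisibility condition. For vertical concatenation $[^\sigma_\pi]$, a merged block is built from constituent blocks of $\sigma$ and $\pi$ linked through shared middle points; each such middle point contributes opposite signs as a lower leg of $\sigma$ and as an upper leg of $\pi$, so these contributions cancel and the signed size of the merged block equals the sum of the signed sizes of its constituents, preserving divisibility by $s$.

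Next, for $\pi\in P^s(k)$ I would check the invariance of $T_\pi=\xi_\pi$ by direct computation. The action on basis tensors reads
$$g^{\otimes k}\bigl(e_{i_1}\otimes\cdots\otimes e_{i_{|k|}}\bigr)=\prod_aw_{i_a}^{\epsilon_a}\cdot e_{\sigma(i_1)}\otimes\cdots\otimes e_{\sigma(i_{|k|})},$$
with $\epsilon_a=+1$ for a white leg and $\epsilon_a=-1$ for a black leg. On block-constant index tuples assigning value $j_B$ to a block $B$ of $\pi$, the scalar factor becomes $\prod_Bw_{j_B}^{\Sigma_B}$, where $\Sigma_B$ denotes the signed size of $B$. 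For $\pi\in P^s$ each $\Sigma_B$ is a multiple of $s$, so the scalar equals $1$, and since $\sigma$ merely permutes the block labels we obtain $g^{\otimes k}\xi_\pi=\xi_\pi$.

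The reverse inclusion is the only genuinely nontrivial point, and I would derive it from the action of the diagonal subgroup $\mathbb Z_s^N$ alone. A basis tensor $e_I=e_{i_1}\otimes\cdots\otimes e_{i_{|k|}}$ whose index pattern is $\rho$ transforms under $w\in\mathbb Z_s^N$ by the character $\prod_Bw_{j_B}^{\Sigma_B^\rho}$, which is trivial on all of $\mathbb Z_s^N$ precisely when $\rho\in P^s$. Consequently the ``exact pattern'' vectors $e^\rho=\sum_{\mathrm{pattern}(I)=\rho}e_I$, as $\rho$ ranges over $P^s(k)$, form a basis of $Fix_{\mathbb Z_s^N}(u^{\otimes k})$. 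Combining the identity $\xi_\pi=\sum_{\rho\geq\pi}e^\rho$ with Möbius inversion on the partition lattice expresses each $e^\rho$ in terms of $\{\xi_\pi\mid\pi\geq\rho\}$; since $P^s$ is closed under coarsening (unions of blocks whose signed sizes lie in $s\mathbb Z$ remain in $s\mathbb Z$), every such $\pi$ lies in $P^s$. This yields $Fix_{H_N^s}(u^{\otimes k})\subset Fix_{\mathbb Z_s^N}(u^{\otimes k})=span\{\xi_\pi\mid\pi\in P^s(k)\}$, and combined with the previous inclusion gives the desired equality. The main delicate step is this Möbius-inversion / diagonal-subgroup argument; everything else reduces to routine bookkeeping of signed block sizes.
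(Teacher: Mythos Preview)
The paper does not actually prove this proposition; it simply cites \cite{fbl} and notes the special cases $s=1,2$. Your proposal therefore supplies genuine content where the paper offers none, and your overall strategy---verifying the category axioms for $P^s$, checking invariance of $\xi_\pi$ directly, and then exploiting the wreath product structure $H_N^s=\mathbb Z_s^N\rtimes S_N$ for the reverse inclusion---is the natural one and essentially correct.

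There is, however, a real error in your reverse-inclusion step. You assert that the exact-pattern vectors $e^\rho$ with $\rho\in P^s(k)$ form a basis of $Fix_{\mathbb Z_s^N}(u^{\otimes k})$, and then write $Fix_{\mathbb Z_s^N}(u^{\otimes k})=span\{\xi_\pi\mid\pi\in P^s(k)\}$. This is false: the diagonal torus $\mathbb Z_s^N$ acts by scalars on each individual basis tensor $e_I$, so its fixed space is $span\{e_I:\ker(I)\in P^s(k)\}$, which is strictly larger than the span of the pattern sums $e^\rho$. What you have actually shown is that each $e_I$ with $\ker(I)\in P^s$ is $\mathbb Z_s^N$-fixed, not that only the $S_N$-averaged combinations are. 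The fix is to bring in $S_N$-invariance explicitly: since $H_N^s$ is generated by $S_N$ and $\mathbb Z_s^N$, one has
$$Fix_{H_N^s}(u^{\otimes k})=Fix_{S_N}(u^{\otimes k})\cap Fix_{\mathbb Z_s^N}(u^{\otimes k}).$$
Any $S_N$-fixed vector has the form $\sum_\rho c_\rho e^\rho$; imposing $\mathbb Z_s^N$-invariance on this (coefficient by coefficient in the $e_I$ basis) forces $c_\rho=0$ whenever $\rho\notin P^s$. From that point your M\"obius inversion and upward-closure arguments go through as written.
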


\begin{proof}
This is something standard, extending some well-known results at $s=1,2$, where $H_N^s$ is respectively the symmetric group $S_N$, and the hyperoctahedral group $H_N$. For full details here, along with a quantum group version of this result, we refer to \cite{fbl}.
\end{proof}

The groups $H_N^s$ are basic examples of complex reflection groups, and belong to the standard series of such groups $\{H_N^{s,d}\}$, depending on an extra parameter $d$. In what follows we will be interested in this series $H_N^{s,d}$, the result that we will need being:

\begin{proposition}
Assuming that $d\in\mathbb N\cup\{\infty\}$ satisfies $2|d|[2,s]$, the group
$$H_N^{s,d}=\left\{g\in H_N^s\Big|\det g\in\mathbb Z_d\right\}$$
is homogeneous. Moreover, when $2|s$ we have in fact $H_N\subset H_N^{s,d}$.
\end{proposition}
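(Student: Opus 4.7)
The plan is to verify the defining inclusions of homogeneity, $S_N\subset H_N^{s,d}\subset U_N^+$, together with the additional inclusion $H_N\subset H_N^{s,d}$ under the parity hypothesis on $s$. Everything should reduce to bookkeeping with determinants, so the key preliminary step is to pin down exactly where $\det$ lives on $H_N^s$.

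First I would observe that any $g\in H_N^s$ is a monomial matrix whose nonzero entries, one per row and column, lie in $\mathbb Z_s$; writing $\sigma\in S_N$ for the underlying permutation, we get $\det g=\varepsilon(\sigma)\prod_i g_{i,\sigma(i)}\in\{\pm1\}\cdot\mathbb Z_s=\mathbb Z_{[2,s]}$. The divisibility $d\,|\,[2,s]$ from the hypothesis therefore says that $\mathbb Z_d$ is an actual subgroup of $\mathbb Z_{[2,s]}$, and hence $H_N^{s,d}=\det^{-1}(\mathbb Z_d)$ is a well-defined subgroup of $H_N^s$. The inclusion $H_N^{s,d}\subset U_N\subset U_N^+$ is then immediate, since elements of $H_N^s$ are unitary matrices.

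Next, for $S_N\subset H_N^{s,d}$: a permutation matrix has entries in $\{0,1\}\subset\mathbb Z_s$ and so belongs to $H_N^s$, and its determinant is its signature, an element of $\mathbb Z_2$. The remaining divisibility $2\,|\,d$ from the hypothesis gives $\mathbb Z_2\subset\mathbb Z_d$, so $\det\sigma\in\mathbb Z_d$ for every $\sigma\in S_N$, which is the missing inclusion. This completes the proof of homogeneity.

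Finally for the moreover clause: when $2\,|\,s$ we have $\mathbb Z_2\subset\mathbb Z_s$, so any signed permutation matrix $g\in H_N$ has entries in $\{0,\pm1\}\subset\mathbb Z_s$ and hence sits in $H_N^s$, while $\det g\in\{\pm1\}\subset\mathbb Z_2\subset\mathbb Z_d$ by the already-used $2\,|\,d$. Thus $H_N\subset H_N^{s,d}$. I do not anticipate any real obstacle here; the two divisibility conditions $2\,|\,d$ and $d\,|\,[2,s]$ in the hypothesis are calibrated precisely so that the two inclusions in each of the above computations compose to give what is needed, and the only mild point to be careful about is the identification $\det(H_N^s)=\mathbb Z_{[2,s]}$ at the very first step.
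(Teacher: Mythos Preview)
Your proof is correct and follows essentially the same approach as the paper: compute $\det(H_N^s)\subset\mathbb Z_{[2,s]}$, use $2\mid d$ to get $\mathbb Z_2\subset\mathbb Z_d$ and hence $S_N\subset H_N^{s,d}$, and then handle the $H_N$ inclusion via $-1\in\mathbb Z_s$ when $2\mid s$. The paper's version is terser (it also remarks that without $2\mid d$ one only gets $A_N\subset H_N^{s,d}$), but the substance is identical.
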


\begin{proof}
Observe first that for $g\in H_N^s$ we have $\det g\in\mathbb Z_{[2,s]}$. Now if we assume $d|[2,s]$, in order to avoid redundancy, we have two cases: when $d$ is even we obtain a homogeneous group, as stated, while when $d$ is odd we only have $A_N\subset H_N^{s,d}$. Thus, for obtaining a homogeneous group, we must assume $2|d|[2,s]$. Finally, the last assertion is clear.
\end{proof}

Observe that we have $H_N^{s,d}=H_N^s\cap U_N^d$. In addition, we have a diagram as follows:
$$\xymatrix@R=15mm@C=15mm{
SU_N\ar[r]&U_N^2\ar[r]&U_N^d\ar[r]&U_N\\
A_N\ar[u]\ar[r]&S_N\ar[u]\ar[r]&H_N^d\ar[u]\ar[r]&H_N^\infty\ar[u]
}$$

We already know from Proposition 4.1 that at $d=s$ the group under consideration, namely $H_s^s$ itself, is easy. The case $N=2,s=4,d=2$ is special as well, as follows:

\begin{proposition}
$H_2^{4,2}$ is easy, the corresponding category of partitions being
$$D(k,l)=\begin{cases}
P^2(k,l)&{\rm when}\ \underline{k}=\underline{l}(4)\\
\emptyset&{\rm otherwise}
\end{cases}$$
where $\underline{k}$ is the number $\#\circ-\#\bullet$, over the symbols of $k$.
\end{proposition}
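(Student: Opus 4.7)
The plan is to exploit the structural decomposition $H_2^{4,2}=\mathbb{Z}_4\cdot H_2$, where $\mathbb{Z}_4\subset U_2$ is the central subgroup of scalar matrices $\{I,iI,-I,-iI\}$. The first step is to verify this factorization: every monomial $2\times 2$ matrix with entries in $\mathbb{Z}_4$ and determinant in $\{\pm1\}$ can be written as $\lambda h$ with $\lambda\in\mathbb{Z}_4$ and $h\in H_2$. For a diagonal element $\text{diag}(a,b)$ with $ab\in\{\pm1\}$, the condition forces $a/b\in\{\pm1\}$, so $\text{diag}(a,b)=b\cdot\text{diag}(a/b,1)\in\mathbb{Z}_4\cdot H_2$; the antidiagonal case is analogous. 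Combined with $\mathbb{Z}_4\cap H_2=\{\pm I\}$, this gives $|H_2^{4,2}|=4\cdot 8/2=16$, matching the direct count.

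With this factorization, an intertwiner $T\in C_{H_2^{4,2}}(k,l)$ is precisely one which intertwines both $H_2$ and the central element $iI$. By Proposition 4.1 applied at $s=2$, the first condition identifies the $H_2$-intertwiners with $span(T_\pi\mid\pi\in P^2(k,l))$. For the second, since $u^\circ=iI$ and $u^\bullet=-iI$, the scalar $iI$ acts on $V^{\otimes k}$ as $i^{\underline{k}}\cdot\text{id}$, so the relation $Tu^{\otimes k}=u^{\otimes l}T$ reduces to $i^{\underline{k}}T=i^{\underline{l}}T$. This is automatic when $\underline{k}\equiv\underline{l}\pmod 4$ and forces $T=0$ otherwise. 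Combining the two conditions yields exactly the space $span(D(k,l))$ from the statement. Easiness of $H_2^{4,2}$ then follows once one checks that $D$ is stable under the three operations of Definition 1.4, which is automatic: it is cut out from $P^2$ by a congruence on $\underline{k}-\underline{l}$ that is additive under horizontal and vertical concatenation and preserved by upside-down turning.

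The main obstacle is the factorization $H_2^{4,2}=\mathbb{Z}_4\cdot H_2$. The argument is elementary, but rests on an arithmetic accident, namely that $a,b\in\mathbb{Z}_4$ with $ab\in\{\pm1\}$ forces $a=\pm b$, which is specific to $N=2$, $s=4$, $d=2$. Everything downstream of this factorization is routine, so the proof really amounts to leveraging this structural coincidence, consistent with the paper's indication that this case is ``special''.
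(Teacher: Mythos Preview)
Your proof is correct and shares the paper's two key ingredients: the decomposition $H_2^{4,2}=H_2\cup iH_2$ (equivalently $\mathbb Z_4\cdot H_2$), and the mod~$4$ constraint on $\underline{k}-\underline{l}$ coming from the action of the scalar $iI$. The difference lies in how the equality $C_{H_2^{4,2}}=span(D)$ is obtained. The paper derives only the inclusion $C\subset span(D)$ from these ingredients (via functoriality $H_2\subset H_2^{4,2}$ and the observation that $(ig)^{\otimes l}\xi=\xi$ forces $i^{\underline l}=1$), and then appeals to the classification results of Tarrago--Weber \cite{tw1} to close the gap. You instead phrase the intertwining condition for $H_2^{4,2}=\langle H_2,\,iI\rangle$ as the conjunction of the conditions for the two generating subgroups, which gives the reverse inclusion $span(D)\subset C$ for free: every $H_2$-intertwiner already satisfies the $iI$-condition whenever $\underline{k}\equiv\underline{l}\ (4)$. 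This makes your argument self-contained and avoids the external classification input, at the mild cost of having to verify directly that $D$ is a category of partitions (which the paper also notes, but for a different reason).
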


\begin{proof}
According to the definition of $H_N^{s,d}$, we have:
\begin{eqnarray*}
H_2^{4,2}
&=&\left\{g\in H_2^4\Big|\det g\in\mathbb Z_2\right\}\\
&=&\left\{\begin{pmatrix}a&0\\0&b\end{pmatrix},\begin{pmatrix}0&a\\b&0\end{pmatrix}\Big|a,b\in\mathbb Z_4,ab\in\mathbb Z_2\right\}\\
&=&\left\{\begin{pmatrix}a&0\\0&b\end{pmatrix},\begin{pmatrix}0&a\\b&0\end{pmatrix}\Big|a,b=\pm1\ {\rm or}\ a,b=\pm i\right\}\\
&=&H_2\cup iH_2
\end{eqnarray*}

Now observe that by functoriality, and by using as well the result in Proposition 4.1, at $s=2$, the associated Tannakian category $C$ satisfies:
$$C\subset C_{H_2}=span(P^2)$$

In order to compute $C$, we use the trivial fact that the fixed point relations $g^{\otimes l}\xi=\xi$, $(tg)^{\otimes l}\xi=\xi$ with $t\in\mathbb T$ imply $t^l=1$, with the usual conventions $t^\circ=t,t^\bullet=\bar{t}$ for the colored exponents. In our case, with $t=i$ we obtain that we have:
$$C(0,l)\neq\emptyset\implies i^l=1\implies\underline{l}=0(4)$$

More generally, the same method gives in fact the following implications:
$$C(k,l)\neq\emptyset\implies i^k=i^l\implies\underline{k}=\underline{l}(4)$$

We conclude that we have $C\subset span(D)$, where $D=(D(k,l))$ is the collection of sets in the statement. But this collection of sets forms a category of partitions, and by comparing with the classification results in \cite{tw1}, we obtain $C=span(D)$, as stated.
\end{proof}

In what follows, most convenient for the study of $H_N^s$ and its subgroups $H_N^{s,d}$ is to use the wreath product decomposition $H_N^s=S_N\wr\mathbb Z_s$. According to this formula, we have:

\begin{proposition}
Assuming that $d\in\mathbb N\cup\{\infty\}$ satisfies $2|d|[2,s]$, we have
$$H_N^{s,d}=\left\{\sigma(\rho_1,\ldots,\rho_N)\Big|\sigma\in S_N,\rho_i\in\mathbb Z_s,\rho_1\ldots\rho_N\in\mathbb Z_d\right\}$$
where $\sigma(\rho_1,\ldots,\rho_N)=\sum_i\rho_ie_{\sigma(i)i}$, and this group is homogeneous.
\end{proposition}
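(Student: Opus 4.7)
The plan is to unwind the wreath product description $H_N^s=S_N\wr\mathbb Z_s$ so that a generic element is written explicitly in the monomial form $\sigma(\rho_1,\ldots,\rho_N)=\sum_i\rho_ie_{\sigma(i)i}$, and then to cut out the subgroup $H_N^{s,d}$ by computing the determinant.

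First I would note that every element of $H_N^s$ is by definition of the wreath product of this shape, with $\sigma\in S_N$ and $\rho_i\in\mathbb Z_s$. A direct expansion of the determinant along the unique nonzero entry in each column gives
$$\det\bigl(\sigma(\rho_1,\ldots,\rho_N)\bigr)=\varepsilon(\sigma)\,\rho_1\rho_2\cdots\rho_N,$$
where $\varepsilon(\sigma)\in\{\pm1\}$ is the signature. So the condition $\det g\in\mathbb Z_d$ defining $H_N^{s,d}$ becomes $\varepsilon(\sigma)\rho_1\cdots\rho_N\in\mathbb Z_d$. Here the hypothesis $2\mid d$ enters crucially: it says that $\mathbb Z_2\subset\mathbb Z_d$, so $\varepsilon(\sigma)\in\mathbb Z_d$ automatically, and the condition simplifies to $\rho_1\cdots\rho_N\in\mathbb Z_d$. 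This gives the claimed formula.

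For the homogeneity statement, one simply observes that a standard permutation matrix $\sigma\in S_N\subset U_N$ corresponds to the monomial matrix $\sigma(1,1,\ldots,1)$, whose product of coefficients is $1\in\mathbb Z_d$; hence $S_N\subset H_N^{s,d}$. (This was already recorded in Proposition 4.2, but follows instantly from the explicit description we have just derived.)

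The only mild subtlety, and what I would regard as the main point to check with care, is that the hypothesis $d\mid[2,s]$ (used in Proposition 4.2 to avoid a redundant parametrisation) plays no role in the proof of the present identity: the formula holds for any $d$ with $2\mid d$, the role of $d\mid[2,s]$ being solely to ensure that $\mathbb Z_d$ is the smallest group of roots of unity containing all possible values of $\det g$ for $g\in H_N^s$. This is worth a remark, but it does not complicate the argument.
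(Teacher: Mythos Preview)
Your proof is correct and follows essentially the same route as the paper: write a generic element of $H_N^s$ in monomial form, compute its determinant as $\varepsilon(\sigma)\rho_1\cdots\rho_N$, and use $2\mid d$ to absorb the sign $\varepsilon(\sigma)\in\mathbb Z_2\subset\mathbb Z_d$. Your treatment is slightly more explicit about the determinant calculation and the homogeneity check, and your closing remark that the divisibility $d\mid[2,s]$ is only a non-redundancy condition (not needed for the identity itself) is a worthwhile observation that the paper's proof leaves implicit.
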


\begin{proof}
With the convention in the statement for $\sigma(\rho_1,\ldots,\rho_N)$, we have:
$$H_N^s=\left\{\sigma(\rho_1,\ldots,\rho_N)\Big|\sigma\in S_N,\rho_i\in\mathbb Z_s\right\}$$

Consider now an arbitrary number $d\in\mathbb N\cup\{\infty\}$. According to the definition of $H_N^{s,d}$, this group has the following description, where $\varepsilon:S_N\to\{\pm1\}$ is the signature:
$$H_N^{s,d}=\left\{\sigma(\rho_1,\ldots,\rho_N)\Big|\sigma\in S_N,\rho_i\in\mathbb Z_s,\varepsilon(\sigma)\rho_1\ldots\rho_N\in\mathbb Z_d\right\}$$

Now when assuming $2|d$ we have $-1\in\mathbb Z_d$, and so $\varepsilon(\sigma)=\pm1\in\mathbb Z_d$, and we obtain the formula in the statement. As for the homogeneity claim, this is clear as well.
\end{proof}

Regarding now the easy envelope of $H_N^{s,d}$, we have the following result:

\begin{theorem}
We have the easy envelope formula
$$(H_N^{s,d})^1=H_N^s$$
unless we are in the case $(H_2^{4,2})^1=H_2^{4,2}$, which is exceptional.
\end{theorem}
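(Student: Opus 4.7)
The inclusion $(H_N^{s,d})^1 \subset H_N^s$ will be immediate from Proposition~2.2(2): since $H_N^{s,d} \subset H_N^s$ and $H_N^s$ is easy with category $P^s$ by Proposition~4.1, every $T_\pi$ with $\pi \in P^s$ lies in $C(H_N^{s,d})$, hence $P^s \subset D^1(H_N^{s,d})$. The real content is the reverse direction $D^1 \subset P^s$: given $\pi \in P$ with some block $B_0$ of weighted size $s(B_0) \not\equiv 0 \pmod s$, I must exhibit $g \in H_N^{s,d}$ with $g^{\otimes l} \xi_\pi \neq \xi_\pi$.

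The workhorse will be the diagonal subtorus $D^{s,d} = \{\mathrm{diag}(\rho_1,\ldots,\rho_N) : \rho_i \in \mathbb{Z}_s,\ \prod_i \rho_i \in \mathbb{Z}_d\}$, which sits in $H_N^{s,d}$ by Proposition~4.4. Decomposing $\xi_\pi$ into weight vectors for the ambient torus $\mathbb{Z}_s^N$, each assignment $v:\mathrm{Blocks}(\pi)\to\{1,\ldots,N\}$ yields a joint eigenvector of weight $(T_1,\ldots,T_N)$ with $T_i = \sum_{B:v(B)=i} s(B)$, and $\xi_\pi$ is $D^{s,d}$-fixed precisely when, for every such $v$, the character $(T_1,\ldots,T_N)$ is trivial on $D^{s,d}$, i.e.\ when the coordinates are all congruent modulo $s$ with a common value divisible by $d$.

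For $N \geq 3$ I would pick $v$ sending $B_0$ to $1$ and every other block to $2$; then $T_3 = 0$ while $T_1 = s(B_0) \not\equiv 0 \pmod s$, violating the equality-mod-$s$ requirement and giving $\pi \notin D^1$ as needed. For $N = 2$ with $s$ odd, the constraints $T_1 \equiv T_2 \pmod s$ (from splitting $B_0$ from the rest) and $S = T_1 + T_2 \equiv 0 \pmod s$ (from the constant assignment $v \equiv 1$) combine, via $\gcd(2,s) = 1$, to force $s \mid s(B_0)$. The hard case, and the main obstacle, is $N = 2$ with $s$ even: the diagonal torus alone only yields $s(B_0) \equiv 0 \pmod{\mathrm{lcm}(d, s/2)}$, which is strictly weaker than $s \mid s(B_0)$ whenever $v_2(d) < v_2(s)$. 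To close the gap I would invoke the antidiagonal monomial elements of $H_2^{s,d}$ (with entries $\alpha,\beta$ subject to $\alpha\beta \in \mathbb{Z}_d$), whose action combines the swap of the two coordinates with scalar factors, producing additional relations on the coefficients of $\xi_\pi$ that tighten the conclusion to $s \mid s(B_0)$ outside the exceptional configuration, where the extra constraints degenerate. The positive statement in the exceptional case, $(H_2^{4,2})^1 = H_2^{4,2}$, then requires no further work, as Proposition~4.3 already exhibits $H_2^{4,2}$ as easy and hence equal to its own easy envelope.
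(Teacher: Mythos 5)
Your opening inclusion and your treatment of the cases $N\geq3$ and $N=2$ with $s$ odd are correct, and are in substance the same computation as the paper's: the paper encodes the torus constraints through the kernels $\nu=\ker i\leq\pi$ and the dichotomy $|\nu|<N$ versus $|\nu|=N$, which is precisely your weight-vector bookkeeping for the diagonal subgroup $D^{s,d}$. The genuine gap is exactly the case you flag as the main obstacle, $N=2$ with $s$ even, where your argument is only a sketch, and the proposed fix cannot work. For $N=2$, an antidiagonal element $g$ (say $ge_1=ae_2$, $ge_2=be_1$, with $-ab\in\mathbb Z_d$, i.e.\ $ab\in\mathbb Z_d$ since $2|d$) sends the term of $\xi_\pi$ indexed by an assignment $v$, of weight $(T_1,T_2)$, to $a^{T_1}b^{T_2}$ times the term indexed by $v$ composed with the flip $1\leftrightarrow2$; as all terms of $\xi_\pi$ have coefficient $1$ and the flip permutes them, the fixedness condition is $a^{T_2}b^{T_1}=1$ for all admissible $(a,b)$, and since the constraint $ab\in\mathbb Z_d$ is symmetric in $a,b$, this is \emph{literally the same} set of conditions as for the diagonal elements. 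So the antidiagonal part of $H_2^{s,d}$ imposes no relations beyond your torus constraints, and the discrepancy between $\mathrm{lcm}(d,s/2)$ and $s$ cannot be closed this way.

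In fact the obstruction is not just in your write-up: take $(N,s,d)=(2,8,2)$ and $\pi$ consisting of two all-white blocks of four points. Every diagonal $g=\mathrm{diag}(a,b)$ with $ab\in\mathbb Z_2$ fixes $\xi_\pi$ (the relevant scalars are $a^8=b^8=1$ and $a^4b^4=(ab)^4=1$), and by the above remark so does every antidiagonal element of $H_2^{8,2}$; hence $\pi\in D^1$ although $\pi\notin P^8$. Thus no choice of group element yields the inequality $g^{\otimes l}\xi_\pi\neq\xi_\pi$ you need, and your route to $s\mid s(B_0)$ outside the single configuration $(2,4,2)$ is blocked. (This also shows that the concluding step of the paper's own proof, which passes from the subpartition conditions to ``$\pi\in P^s$ unless $N=2,s=4,d=2$'' without further argument, glosses over the same regime $N=2$, $d\mid s/2$; the difficulty you ran into is real.) As it stands, your proposal proves the statement only for $N\geq3$ and for $N=2$ with $s$ odd, and the key even case is unproven.
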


\begin{proof}
We have an inclusion $H_N^{s,d}\subset H_N^s$, and by functoriality, and by using as well the easiness result in Proposition 4.1 above, we succesively obtain:
$$H_N^{s,d}\subset H_N^s\implies span(P^s)\subset C\implies P^s\subset D^1$$

In order to prove the reverse inclusion $D^1\subset P^s$, we must compute $D^1$. 

By using Proposition 2.6, it is enough to discuss the fixed points. For a partition $\pi\in P(k)$, the associated vector $T_\pi$, that we will denote here by $\xi_\pi$, is given by:
$$\xi_\pi=\sum_{i_1\ldots i_k}\delta_\pi(i_1,\ldots,i_k)e_{i_1}\otimes\ldots\otimes e_{i_k}$$

Now with $g=\sigma(\rho_1,\ldots,\rho_N)\in H_N^{s,d}$, as in Proposition 4.4, we have:
$$g^{\otimes k}\xi_\pi=\sum_{i_1\ldots i_k}\delta_\pi(i_1,\ldots,i_k)\rho_{i_1}\ldots\rho_{i_k}\ e_{i_{\sigma(1)}}\otimes\ldots\otimes e_{i_{\sigma(k)}}$$

On the other hand, by replacing $i_r\to i_{\sigma(r)}$, we have as well:
\begin{eqnarray*}
\xi_\pi
&=&\sum_{i_1\ldots i_k}\delta_\pi(i_{\sigma(1)},\ldots,i_{\sigma(k)})\ e_{i_{\sigma(1)}}\otimes\ldots\otimes e_{i_{\sigma(k)}}\\
&=&\sum_{i_1\ldots i_k}\delta_\pi(i_1,\ldots,i_k)\ e_{i_{\sigma(1)}}\otimes\ldots\otimes e_{i_{\sigma(k)}}
\end{eqnarray*}

We conclude from this that the formula $g^{\otimes k}\xi_\pi=\xi_\pi$ is equivalent to:
$$\delta_\pi(i_1,\ldots,i_k)=1\implies\rho_{i_1}\ldots\rho_{i_k}=1$$

To be more precise, in order for $g^{\otimes k}\xi_\pi=\xi_\pi$ to hold, this formula must hold for any numbers $\rho_1,\ldots,\rho_N\in\mathbb Z_s$ satisfying $\rho_1\ldots\rho_N\in\mathbb Z_d$. 

Observe that in the case $d=s$ the condition $\rho_1\ldots\rho_N\in\mathbb Z_d$ dissapears, and the condition $\delta_\pi(i_1,\ldots,i_k)=1\implies\rho_{i_1}\ldots\rho_{i_k}=1$, for any $\rho_1,\ldots,\rho_N\in\mathbb Z_s$, tells us that all the blocks of $\pi$, when weighted according to the rules $\circ\to +,\bullet\to -$, must have as size a multiple of $s$. Thus $\pi\in P^s$. This is something that we already know, from Proposition 4.1.

Now back to our question, so far we have obtained:
$$D_1(k)=\left\{\pi\Big|\delta_\pi(i_1,\ldots,i_k)=1\implies\rho_{i_1}\ldots\rho_{i_k}=1,\forall\rho_1,\ldots,\rho_N\in\mathbb Z_s,\rho_1\ldots\rho_N\in\mathbb Z_d\right\}$$

In order to compute this set, let $\pi$ and $i_1,\ldots,i_k$ be as above, and consider the partition $\nu=\ker i$. We have then $\nu\leq\pi$, and since $i_1,\ldots,i_k\in\{1,\ldots,N\}$, we have $r\leq N$. 

Depending now on the value of $r=|\nu|$, we have two cases, as follows:

(1) In the case $N>r$ we have a free variable among $\{\rho_1,\ldots,\rho_N\}$, that we can adjust as to have $\rho_1\ldots\rho_N\in\mathbb Z_d$. Thus, the condition $\rho_1\ldots\rho_N\in\mathbb Z_d$ dissapears, and we are left with the $H_N^s$ problem, which gives, as explained above, $\nu\in P_s$.

(2) In the case $N=r$, let us denote by $a_1+b_1,\ldots,a_N+b_N$ the lengths of the blocks of $\nu$, with $a_i$ standing for the white legs, and $b_i$ standing for the black legs. We have:
$$\rho_1^{a_1-b_1}\ldots\rho_N^{a_N-b_N}=1,\forall\rho_1,\ldots,\rho_N\in\mathbb Z_s,\rho_1\ldots\rho_N\in\mathbb Z_d$$

With $c_i=a_i-b_i$, and with $\eta_N=\rho_1\ldots\rho_N$, we must have:
$$\rho_1^{c_1-c_N}\ldots\rho_{N-1}^{c_{N-1}-c_N}\eta_N^{c_N}=1,\forall\rho_1,\ldots,\rho_{N-1}\in\mathbb Z_s,\forall \eta_N\in\mathbb Z_d$$

Thus we must have $c_1=\ldots=c_N(s)$, and this common value must be a number $c=0(d)$. Now let us introduce the following sets:
$$P_c^{s,d}=\left\{\pi\Big||\pi|=N,a_i-b_i=c(s)\right\}$$

In terms of these sets, and of their union $P^{s,d}=\cup_cP_c^{s,d}$, we have obtained that $\pi\in D^1$ happens if and only if any subpartition $\nu\leq\pi$ has the following property:

(1) If $|\nu|<N$, then $\nu\in P^s$.

(2) If $|\nu|=N$, then $\nu\in P^{s,d}$.

(3) If $|\nu|>N$, no condition.

But this shows that we must have $\pi\in P^s$, unless we are in the exceptional case, $N=2,s=4,d=2$. Thus we have $(H_N^{s,d})^1=H_N^s$, as stated.
\end{proof}

Finally, regarding the easiness level of $H_N^{s,d}$, we have:

\begin{proposition}
The easiness level of $H_N^{s,d}$ is finite, $p\leq B_{Nd}$.
\end{proposition}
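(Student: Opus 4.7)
The plan is to mirror the proof of Theorem 3.6, reducing the statement to a presentation-level estimate for $\widehat{H_N^{s,d}}$. By Proposition 3.5 it suffices to show that this presentation level is at most $Nd$, after which the bound $p\leq B_{Nd}$ follows. The starting point is the identity $H_N^{s,d}=H_N^s\cap U_N^d$, observed in the text just after Proposition 4.2, which says that the Tannakian category of $H_N^{s,d}$ is generated inside that of $U_N^+$ by the union of the Tannakian categories of $H_N^s$ and $U_N^d$.

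Next I would bound each contribution separately. By Proposition 4.1, $H_N^s$ is easy with category $P^s$, so its Tannakian category is spanned by the vectors $T_\pi$ with $\pi\in P^s$; taking the identity/duality partitions together with a single block of size $s$ as generators of $P^s$, the resulting vectors sit in $Fix(u^{\otimes l})$ with $|l|\leq 2s$. On the other hand, the computation in the proof of Theorem 3.6 already exhibits the extra relation $(\det g)^d=1$ cutting $U_N^d$ inside $U_N$ as a single fixed vector lying in $Fix(u^{\otimes Nd})$. Combining the two, the Tannakian category of $H_N^{s,d}$ is generated as a tensor category by fixed vectors of $u^{\otimes l}$ with $|l|\leq Nd$ in the regime where $s$ is small relative to $Nd$, and Proposition 3.5 then yields $p\leq B_{Nd}$.

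The step that requires the most care is this final length bookkeeping: one must verify that the $H_N^s$-generators can indeed be absorbed into fixed vectors of length at most $Nd$, either directly or via tensor-categorical consequences of the higher-length determinant vector. This is the only place where the arithmetic constraint $2\mid d\mid[2,s]$ enters in an essential way, and it is the exact analogue of the corresponding verification made in the proof of Theorem 3.6.
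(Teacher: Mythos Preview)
Your overall strategy---bound the presentation level by $Nd$ and invoke Proposition~3.5---is exactly the paper's. The paper's proof is in fact terser than yours: it simply asserts that the relation $(\det g)^d=1$ presents the tensor category of $H_N^{s,d}$, hence presentation level $r\leq Nd$, hence $p\leq B_{Nd}$. It does not pass through the decomposition $H_N^{s,d}=H_N^s\cap U_N^d$ or discuss the $H_N^s$-generators separately.

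The concern you raise in your last paragraph is legitimate, but your proposed resolution is wrong on two counts. First, there is no ``corresponding verification'' in the proof of Theorem~3.6: for $U_N^d$ the only relations needed beyond the determinant are those of $U_N$ itself, which come from the crossing in $\mathcal P_2$ and hence live in length $4\leq Nd$ automatically once $d\geq 2$. No extra bookkeeping is performed there. Second, and more seriously, the constraint $2\mid d\mid[2,s]$ does \emph{not} force the length of the $P^s$-generator (a single block with $s$ legs of one colour) to sit below $Nd$. Take $N=2$, $s=100$, $d=2$: then $2\mid 2\mid[2,100]=100$ holds, yet $Nd=4$ while the block generator has length $100$. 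So the ``absorption'' you invoke does not follow from that arithmetic constraint, and you give no other mechanism by which the $s$-block becomes a tensor-categorical consequence of fixed vectors of length $\leq Nd$.

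In summary: your route is the same as the paper's, you have correctly located the delicate step, but the justification you supply for it (appeal to $2\mid d\mid[2,s]$ and to an alleged analogue in Theorem~3.6) does not hold. The paper's one-line argument simply asserts the presentation-level bound without addressing this point at all.
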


\begin{proof}
This follows from Proposition 3.5 above, because the formula $(\det g)^d=1$ presents the corresponding tensor category, and discrete quantum group. To be more precise, $H_N^{s,d}$ has presentation level $r\leq Nd$, and so has easiness level $p\leq B_{Nd}$.
\end{proof}

The above result is of course quite theoretical, the upper bound for $p$ found there being too big. Some more precise results can be probably obtained at $N=2$.

\section{Half-liberations}

In this section and in the next one we work out some more examples, this time in the non-classical setting. There are not many non-easy candidates here, but one interesting construction comes from the bistochastic groups and quantum groups $B_N\subset B_N^+$, and the orthogonal quantum groups $O_N\subset O_N^*\subset O_N^+$ from \cite{bsp}.

We agree to use matrix indices $i,j=0,1,\ldots,N-1$ for the $N\times N$ compact matrix quantum groups, and indices $i,j=1,\ldots,N-1$ for their $(N-1)\times(N-1)$ subgroups. With this convention, we have the following result, from \cite{rau}:

\begin{proposition}
We have an isomorphism $B_N^+\simeq O_{N-1}^+$, whose transpose is given by
$$C(O_{N-1}^+)\to C(B_N^+)\quad,\quad w_{ij}\to(F^*uF)_{ij}$$
whenever $F\in O_N$ satisfies $Fe_0=\frac{1}{\sqrt{N}}\xi$, where $\xi$ is the all-one vector. 
\end{proposition}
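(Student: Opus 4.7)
The plan is to translate the defining relations of $C(B_N^+)$ through the change of basis given by $F$, and match them with the defining relations of $C(O_{N-1}^+)$ applied to an $(N-1)\times(N-1)$ block. First I would recall that $C(B_N^+)$ is the universal $C^*$-algebra generated by the entries of an orthogonal matrix $u=(u_{ij})$, meaning $u=\bar u$ and $uu^t=u^tu=1$, satisfying the bistochastic relations $u\xi=\xi$ and $u^t\xi=\xi$. Setting $v:=F^*uF$, the matrix $v$ is again orthogonal because $F\in O_N$ has ordinary (numerical) entries. Using $F^*\xi=\sqrt{N}\,e_0$, the bistochastic relations rewrite as $ve_0=e_0$ and $v^te_0=e_0$; combined with the orthogonality of $v$, this forces
$$v=\begin{pmatrix}1&0\\ 0&w\end{pmatrix},$$
where $w=(v_{ij})_{i,j=1,\dots,N-1}$ is orthogonal in the quantum sense.

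Second, by the universal property of $C(O_{N-1}^+)$, the assignment $w_{ij}\mapsto (F^*uF)_{ij}$ for $i,j\geq 1$ extends to a well-defined $*$-homomorphism $\Phi\colon C(O_{N-1}^+)\to C(B_N^+)$. For the inverse, I would invoke the universal property of $C(B_N^+)$: starting from the generating orthogonal matrix $w\in M_{N-1}(C(O_{N-1}^+))$, form the block matrix $v:=1\oplus w$ and then $u:=FvF^*$. A direct check shows $u$ is orthogonal, and the bistochastic conditions follow from $ve_0=e_0$, since $u\xi=FvF^*\xi=\sqrt{N}\,Fve_0=\sqrt{N}\,Fe_0=\xi$, and symmetrically for $u^t\xi$. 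This yields a $*$-homomorphism $\Psi\colon C(B_N^+)\to C(O_{N-1}^+)$.

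Finally, the identities $\Phi\circ\Psi=\mathrm{id}$ and $\Psi\circ\Phi=\mathrm{id}$ reduce to tracking generators, once one knows that in $C(B_N^+)$ the entries $(F^*uF)_{00}$, $(F^*uF)_{0i}$ and $(F^*uF)_{i0}$ are forced by the imposed relations to equal $1$, $0$ and $0$ respectively. The main obstacle I expect is precisely this block-diagonal reduction: verifying rigorously, inside the universal $C^*$-algebra, that bistochasticity together with orthogonality collapse the top row and left column of $v$ to $(1,0,\dots,0)$. This is a short linear-algebraic computation, but it is the substantive content of the proposition; everything else is a formal consequence of the two universal properties.
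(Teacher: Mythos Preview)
Your proposal is correct and follows essentially the same approach as the paper: the key step in both is the equivalence $u\xi=\xi\iff F^*uFe_0=e_0\iff F^*uF=\mathrm{diag}(1,w)$, after which the isomorphism follows from the universal properties. The paper's proof records only this chain of equivalences and then cites \cite{rau}, whereas you spell out the construction of the inverse map and the verification that the two maps are mutual inverses; but the underlying idea is identical.
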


\begin{proof}
Assuming $Fe_0=\frac{1}{\sqrt{N}}\xi$ as above, we have the following computation:
\begin{eqnarray*}
u\xi=\xi
&\iff&uFe_0=Fe_0\\
&\iff&F^*uFe_0=e_0\\
&\iff&F^*uF=diag(1,w)
\end{eqnarray*}

But this gives the isomorphism in the statement. See \cite{rau}.
\end{proof}

We can therefore construct intermediate objects for $B_N\subset B_N^+$, as follows:

\begin{proposition}
Assuming that $F\in O_N$ satisfies $Fe_0=\frac{1}{\sqrt{N}}\xi$, we have inclusions as follows, with the intermediate quantum group $B_F^\circ$ being not easy,
$$B_N\subset B_F^\circ\subset B_N^+$$
obtained by taking the image of the inclusions $O_{N-1}\subset O_{N-1}^*\subset O_{N-1}^+$, via the above isomorphism $O_{N-1}^+\simeq B_N^+$ induced by $F$.
\end{proposition}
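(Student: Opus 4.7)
The plan is to transport the chain $O_{N-1}\subset O_{N-1}^*\subset O_{N-1}^+$ across the isomorphism $B_N^+\simeq O_{N-1}^+$ from Proposition 5.1, and then to derive non-easiness from the known maximality of $B_N\subset B_N^+$ in the easy setting.

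First, I would define $B_F^\circ\subset B_N^+$ to be the closed subgroup corresponding to $O_{N-1}^*\subset O_{N-1}^+$ under the identification. To obtain the chain $B_N\subset B_F^\circ\subset B_N^+$, the only non-automatic point is to check that, under the same identification, $O_{N-1}$ corresponds to $B_N$. This is immediate from the fact that a Hopf $*$-algebra isomorphism intertwines the maximal abelian quotients, which on the two sides are $C(O_{N-1})$ and $C(B_N)$ respectively. I would also take a moment to verify that $B_F^\circ$ is homogeneous in the sense of Definition 1.3, i.e.\ that $S_N\subset B_F^\circ$ with respect to the standard permutation embedding; here the hypothesis $Fe_0=\xi/\sqrt N$ is essential, since it is precisely this condition that makes the $F$-conjugation $u\mapsto F^*uF$ compatible with the standard basis of $\mathbb C^N$, and hence with the embedding of $S_N$ as permutation matrices.

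For non-easiness, I would invoke the classification result from \cite{bsp},\cite{rau} recalled in the introduction: the inclusion $B_N\subset B_N^+$ admits no intermediate easy quantum group. Hence if $B_F^\circ$ were easy it would have to coincide with one of the two endpoints; but transporting back across the isomorphism would then force $O_{N-1}^*=O_{N-1}$ or $O_{N-1}^*=O_{N-1}^+$, both of which fail for $N\geq 4$ by the standard strict-intermediacy $O_m\subsetneq O_m^*\subsetneq O_m^+$ of the half-liberation, see \cite{bsp}. There is no serious obstacle beyond this bookkeeping: the two substantive inputs (the isomorphism $B_N^+\simeq O_{N-1}^+$ and the easy-maximality of $B_N\subset B_N^+$) are already in hand, and the rest is a formal consequence of functoriality. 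The most delicate step, and the one I would write out with most care, is the compatibility check involving $F$, because it is there that the specific choice $Fe_0=\xi/\sqrt N$ is doing work rather than just being decorative.
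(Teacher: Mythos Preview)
Your proposal is correct and follows essentially the same approach as the paper: transport the chain $O_{N-1}\subset O_{N-1}^*\subset O_{N-1}^+$ through the isomorphism of Proposition~5.1, then invoke the easy-maximality of $B_N\subset B_N^+$ to conclude that the intermediate object cannot be easy. Your additional verification that $S_N\subset B_F^\circ$ is in fact redundant once $B_N\subset B_F^\circ$ is established (since $S_N\subset B_N$), so the ``compatibility with $F$'' step you flag as delicate is already absorbed into the identification of $B_N$ with $O_{N-1}$ via the abelianization argument you give.
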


\begin{proof}
The fact that we have inclusions as in the statement follows from Proposition 5.1 above, which produces a diagram as follows:
$$\xymatrix@R=15mm@C=20mm{
O_{N-1}\ar[r]&O_{N-1}^*\ar[r]&O_{N-1}^+\\
B_N\ar@.[r]\ar@{=}[u]&B_F^\circ\ar@.[r]\ar@{=}[u]&B_N^+\ar@{=}[u]
}$$

To be more precise, the quantum group $B_F^\circ$ from the bottom is by definition the image of the quantum group $O_{N-1}^*$ from the top. Since we know that $B_N\subset B_N^+$ is maximal in the easy setting, this new quantum group $B_F^\circ$ is not easy, as claimed.
\end{proof}

Let us record as well the following result:

\begin{proposition}
The quantum group $B_F^\circ\subset B_N^+$  appears via the relations
$$abc=cba\quad,\quad\forall a,b,c\in\left\{(F^*uF)_{ij}\Big|i,j=1,\ldots,N-1\right\}$$
and its presentation level is $r=6$. 
\end{proposition}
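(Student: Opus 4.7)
The proof splits into two parts: identifying the defining relations of $B_F^\circ$ inside $B_N^+$, and computing the presentation level. For the relations, I would unfold the construction: by Proposition 5.2, $B_F^\circ$ is the image of $O_{N-1}^*\subset O_{N-1}^+$ under the isomorphism $O_{N-1}^+\simeq B_N^+$ of Proposition 5.1, which sends $w_{ij}\mapsto(F^*uF)_{ij}$. Since $O_{N-1}^*$ is by definition the half-liberation of $O_{N-1}^+$, cut out by the half-commutation relations $w_{ij}w_{kl}w_{mn}=w_{mn}w_{kl}w_{ij}$ on its generators, transporting these through the isomorphism gives precisely the stated relations $abc=cba$ for $a,b,c\in\{(F^*uF)_{ij}:i,j=1,\ldots,N-1\}$.

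For the upper bound $r\leq 6$, I would invoke Frobenius reciprocity, which for orthogonal-type quantum groups (where $\bar u=u$) gives $\mathrm{Hom}(u^{\otimes k},u^{\otimes l})\simeq\mathrm{Fix}(u^{\otimes k+l})$. The half-commutation relation is equivalent to the flip operator $T:e_i\otimes e_j\otimes e_k\mapsto e_k\otimes e_j\otimes e_i$ belonging to $\mathrm{Hom}(w^{\otimes 3},w^{\otimes 3})$ for $O_{N-1}^*$, which via Frobenius corresponds to the fixed vector $\sum_{a,b,c}e_a\otimes e_b\otimes e_c\otimes e_a\otimes e_b\otimes e_c$ in $w^{\otimes 6}$. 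Conjugating by $F$ transports this to a fixed vector of $u^{\otimes 6}$ for $B_F^\circ$. Together with the ambient $B_N^+$-structure, whose only extra relation beyond $O_N^+$ is the length-$1$ bistochastic constraint $\xi\in\mathrm{Fix}(u)$ (paddable up into $\mathrm{Fix}(u^{\otimes 6})$ by tensoring with copies of the duality cup), this produces the full Tannakian category of $B_F^\circ$, giving $r\leq 6$.

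The main obstacle is the lower bound $r\geq 6$. The plan is to show that $\mathrm{Fix}_{B_F^\circ}(u^{\otimes r})=\mathrm{Fix}_{B_N^+}(u^{\otimes r})$ for every $r\leq 5$, which forces the tensor category generated by the former to be contained in $C_{B_N^+}$, strictly larger than $C_{B_F^\circ}$ since $B_F^\circ\subsetneq B_N^+$; this rules out any $r\leq 5$ as a presentation level. Transporting via the $F$-conjugation, the claim reduces to $\mathrm{Fix}_{O_{N-1}^*}(w^{\otimes r})=\mathrm{Fix}_{O_{N-1}^+}(w^{\otimes r})$ for $r\leq 5$. Since $O_{N-1}^*$ is easy with category consisting of those pairings whose blocks connect positions of opposite parity, and since a direct combinatorial inspection shows that every such balanced pairing on at most $5$ points is automatically noncrossing (the smallest crossing balanced pairing $\{(1,4),(2,5),(3,6)\}$ requiring exactly $6$ points), the desired equality holds. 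Combining the two bounds yields $r=6$.
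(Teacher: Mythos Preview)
Your argument follows the paper's line exactly: transport the half-commutation relations through the isomorphism for the defining relations, invoke Frobenius $End(u^{\otimes 3})\simeq Fix(u^{\otimes 6})$ for $r\leq 6$, and reduce the lower bound to $O_{N-1}^*$ having presentation level $6$---where the paper simply asserts this last fact, you actually supply the combinatorial verification that balanced pairings on at most $5$ points are noncrossing. One slip to fix: Tannakian duality reverses inclusions, so $B_F^\circ\subsetneq B_N^+$ yields $C_{B_N^+}\subsetneq C_{B_F^\circ}$ (strictly \emph{smaller}, not larger); your intended chain $\langle\mathrm{Fix}_{B_F^\circ}(u^{\otimes r})\rangle\subset C_{B_N^+}\subsetneq C_{B_F^\circ}$ and the conclusion are unaffected.
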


\begin{proof}
The first assertion is clear from definitions. By using now the Frobenius isomorphism $End(u^{\otimes 3})\simeq Fix(u^{\otimes 6})$ we conclude that the level is $\leq6$, and the point is that the level is precisely 6, by using the isomorphism with $O_{N-1}^*$, which is of level 6.
\end{proof}

Observe that the relations $abc=cba$ do not hold for all the entries of the modified fundamental corepresentation $v=F^*uF$, due to the fact that we have $v_{00}=1$, and that the relations $ab1=1ba$ corresponds to the commutativity. We have in fact:

\begin{proposition}
The quantum group $B_F^\circ\subset B_N^+$  appears via the relations
$$R^{\otimes 3}T_{\slash\hskip-1.2mm|\hskip-1.2mm\backslash}R^{*\otimes 3}\in End(u^{\otimes 3})$$
where $R=FP$, with $R$ being the projection onto $span(e_1,\ldots,e_{N-1})$.
\end{proposition}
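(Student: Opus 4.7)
The plan is to use the isomorphism $B_N^+\simeq O_{N-1}^+$ from Proposition 5.1 to transport the defining relation of $O_{N-1}^*\subset O_{N-1}^+$, namely the half-commutation $T_{\slash\hskip-1.2mm|\hskip-1.2mm\backslash}\in End(w^{\otimes 3})$, into a relation on the fundamental corepresentation $u$ of $B_N^+$. By Proposition 5.2, this will characterise $B_F^\circ$.

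First I would set up the key identity $w=R^*uR$, where $R=FP$ with $P$ the orthogonal projection of $\mathbb{C}^N$ onto $span(e_1,\ldots,e_{N-1})$, so that $R^*uR$ recovers, as an $N\times N$ matrix, the $(N-1)\times(N-1)$ sub-block with entries $(F^*uF)_{ij}$ for $i,j=1,\ldots,N-1$ from Proposition 5.1. The crucial structural remark is that $RR^*=FPF^*$ is the orthogonal projection onto $\xi^\perp$, since $Fe_0=\xi/\sqrt{N}$. Because $u\xi=\xi$ and $u$ is unitary, $u$ preserves both $\mathbb{C}\xi$ and $\xi^\perp$, hence commutes with $RR^*$. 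Combined with $R^*R=P$ and $P^2=P$, this gives $RR^*R=R$, and therefore the two identities $R(R^*uR)=uR$ and $(R^*uR)R^*=R^*u$.

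The main computation is then the translation of the half-commutation relation. Taking tensor cubes of the previous identities yields
$$R^{\otimes 3}(R^*uR)^{\otimes 3}=u^{\otimes 3}R^{\otimes 3},\qquad (R^*uR)^{\otimes 3}R^{*\otimes 3}=R^{*\otimes 3}u^{\otimes 3},$$
so conjugating the intertwiner relation $w^{\otimes 3}T=Tw^{\otimes 3}$ by $R^{\otimes 3}(\,\cdot\,)R^{*\otimes 3}$ produces
$$u^{\otimes 3}(R^{\otimes 3}TR^{*\otimes 3})=(R^{\otimes 3}TR^{*\otimes 3})u^{\otimes 3},$$
which is exactly the announced condition $R^{\otimes 3}T_{\slash\hskip-1.2mm|\hskip-1.2mm\backslash}R^{*\otimes 3}\in End(u^{\otimes 3})$.

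The main obstacle I expect is the converse direction: showing that, when added to the defining relations of $B_N^+$, this single family of intertwiner relations is \emph{equivalent} to $T\in End(w^{\otimes 3})$, so that it cuts out exactly $B_F^\circ$ and nothing strictly larger. The point here is that $R$ restricts to a genuine unitary from $\xi^\perp$ onto $\xi^\perp$, the $\mathbb{C}\xi$-direction being neutralised by the bistochastic constraint $u\xi=\xi$; consequently, multiplying the relation on $u^{\otimes 3}$ on the left by $R^{*\otimes 3}$ and on the right by $R^{\otimes 3}$ recovers $w^{\otimes 3}T=Tw^{\otimes 3}$, which reverses the above derivation and establishes the equivalence. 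This is consistent with the presentation level $r=6$ recorded in Proposition 5.3.
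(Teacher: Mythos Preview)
Your argument is correct and follows essentially the same route as the paper: transport the half-commutation relation $T_{\slash\hskip-1.2mm|\hskip-1.2mm\backslash}\in End(w^{\otimes 3})$ through the isomorphism of Proposition~5.1 to obtain $R^{\otimes 3}T_{\slash\hskip-1.2mm|\hskip-1.2mm\backslash}R^{*\otimes 3}\in End(u^{\otimes 3})$. The paper does this in two clean steps---first compressing by $P^{\otimes 3}$ to pass from $w$ to $F^*uF=diag(1,w)$, then conjugating by the unitary $F^{\otimes 3}$ to reach $u$---whereas you merge these into a single move with $R=FP$, justified by the commutation of $u$ with the projection $RR^*$ onto $\xi^\perp$; the content is the same, and your more explicit handling of the converse is a welcome addition. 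One small slip: $R$ is a partial isometry from $e_0^\perp$ onto $\xi^\perp$, not from $\xi^\perp$ onto $\xi^\perp$; this does not affect the argument.
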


\begin{proof}
With $F^*uF=diag(1,w)$, as in the proof of Proposition 5.1, the relations are:
\begin{eqnarray*}
T_{\slash\hskip-1.2mm|\hskip-1.2mm\backslash}\in End(w^{\otimes 3})
&\iff&P^{\otimes 3}T_{\slash\hskip-1.2mm|\hskip-1.2mm\backslash}P^{\otimes 3}\in End((F^*uF)^{\otimes 3})\\
&\iff&(FP)^{\otimes 3}T_{\slash\hskip-1.2mm|\hskip-1.2mm\backslash}(FP)^{*\otimes 3}\in End(u^{\otimes 3})
\end{eqnarray*}

Thus, we obtain the formula in the statement.
\end{proof}

Now observe that, due to the conditions $F\in O_N$ and $Fe_0=\frac{1}{\sqrt{N}}\xi$, the linear map associated to $R=FP$ maps $e_0\to0\to0$ and $e_i\to e_i\to f_i$, where $\{f_1,\ldots,f_{N-1}\}$ is a certain orthonormal basis of $\xi^\perp$. Thus $R=FP$ must be a partial isometry $e_0^\perp\to\xi^\perp$.

We can further process the above result, as follows:

\begin{proposition}
The quantum groups $B_F^\circ\subset B_N^+$ with $F\in O_N$, $Fe_0=\frac{1}{\sqrt{N}}\xi$ all coincide, and appear via the relations $T\in End(u^{\otimes 3})$, where
\begin{eqnarray*}
T(e_i\otimes e_j\otimes e_k)
&=&e_k\otimes e_j\otimes e_i-(e_k\otimes e_j\otimes\xi'+e_k\otimes\xi'\otimes e_i+\xi'\otimes e_j\otimes e_i)\\
&&+(e_k\otimes\xi'\otimes\xi'+\xi'\otimes e_j\otimes\xi'+\xi'\otimes\xi'\otimes e_i)-\xi'\otimes\xi'\otimes\xi'
\end{eqnarray*}
with $\xi'=\frac{1}{N}\xi$, and with $\xi$ being as usual the all-one vector.
\end{proposition}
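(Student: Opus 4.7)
The plan is to reduce the relation $R^{\otimes 3} T_{\slash\hskip-1.2mm|\hskip-1.2mm\backslash} R^{*\otimes 3}\in End(u^{\otimes 3})$ from Proposition 5.4 to an intrinsic form that involves only the all-one vector $\xi$, with no reference to $F$. This will prove both assertions at once: that $B_F^\circ$ is independent of the choice of $F$, and that it is cut out inside $B_N^+$ by the explicit operator $T$ in the statement.

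The first step is to compute $RR^*$. Since $R=FP$ with $P$ the orthogonal projection onto $e_0^\perp$ and $F\in O_N$ carrying $e_0$ to $\frac{1}{\sqrt{N}}\xi$, the operator $R$ is a partial isometry from $e_0^\perp$ onto $\xi^\perp$, as already noted in the discussion after Proposition 5.4. Consequently $RR^*=FPF^*$ is the orthogonal projection onto $\xi^\perp$, i.e.\ $RR^*=I-\frac{1}{N}\xi\xi^*$, which manifestly does not depend on the specific choice of $F$. Writing $\xi'=\frac{1}{N}\xi$, this gives $RR^*e_i=e_i-\xi'$ for every $i$.

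The second step is to unfold the composition on a basis vector. Because $T_{\slash\hskip-1.2mm|\hskip-1.2mm\backslash}$ exchanges the outer two tensor legs and fixes the middle one, applying $R^{*\otimes 3}$, then $T_{\slash\hskip-1.2mm|\hskip-1.2mm\backslash}$, then $R^{\otimes 3}$ to $e_i\otimes e_j\otimes e_k$ yields
$$R^{\otimes 3}T_{\slash\hskip-1.2mm|\hskip-1.2mm\backslash}R^{*\otimes 3}(e_i\otimes e_j\otimes e_k)=(RR^*)e_k\otimes(RR^*)e_j\otimes(RR^*)e_i=(e_k-\xi')\otimes(e_j-\xi')\otimes(e_i-\xi').$$
Distributing this triple product over its three binary differences produces eight terms; grouping them according to the number of $\xi'$ factors (zero, one, two, three) reproduces exactly the four lines of the formula for $T$ in the statement, with the alternating signs coming from the single minus in each factor.

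Since the resulting expression involves only $\xi$, the endomorphism defining $B_F^\circ\subset B_N^+$ is the same for every admissible $F$; hence all the $B_F^\circ$ coincide, and by Proposition 5.4 the common quantum group is presented by the single relation $T\in End(u^{\otimes 3})$ as displayed. The argument has no genuine obstacle; the only step requiring a moment's care is the bookkeeping of signs and tensor positions when expanding the triple product, so as to match exactly the grouping given in the statement.
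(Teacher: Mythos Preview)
Your proof is correct and follows essentially the same approach as the paper: both compute $R^{\otimes 3}T_{\slash\hskip-1.2mm|\hskip-1.2mm\backslash}R^{*\otimes 3}$ and observe that the result depends only on $RR^*=1-\frac{1}{N}\xi\xi^*$, which is independent of $F$. Your version is slightly more streamlined, since you exploit directly that $T_{\slash\hskip-1.2mm|\hskip-1.2mm\backslash}$ merely permutes tensor factors to write the answer as $(RR^*e_k)\otimes(RR^*e_j)\otimes(RR^*e_i)=(e_k-\xi')\otimes(e_j-\xi')\otimes(e_i-\xi')$, whereas the paper reaches the same conclusion via an explicit index computation summing over $a,b,c,p,q,r$ and identifying the entries $(RR^*)_{ij}=\delta_{ij}-\frac{1}{N}$.
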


\begin{proof}
The linear map $R^{\otimes 3}T_{\slash\hskip-1.2mm|\hskip-1.2mm\backslash}R^{*\otimes 3}$ from Proposition 5.4 acts as follows:
\begin{eqnarray*}
&&R^{\otimes 3}T_{\slash\hskip-1.2mm|\hskip-1.2mm\backslash}R^{*\otimes 3}(e_i\otimes e_j\otimes e_k)\\
&=&R^{\otimes 3}\sum_{abc}R_{ia}R_{jb}R_{kc}\ e_c\otimes e_b\otimes e_a\\
&=&\sum_{abcpqr}R_{ia}R_{jb}R_{kc}R_{pc}R_{qb}R_{ca}\ e_p\otimes e_q\otimes e_r\\
&=&\sum_{pqr}(RR^t)_{ir}(RR^t)_{jq}(RR^t)_{kp}\ e_p\otimes e_q\otimes e_r
\end{eqnarray*}

On the other hand, since $R=FP$ must be a partial isometry $e_0^\perp\to\xi^\perp$, we have:
$$RR^*=1-Proj(\xi)\quad,\quad(RR^*)_{ij}=\delta_{ij}-\frac{1}{N}$$

We conclude that $R^{\otimes 3}T_{\slash\hskip-1.2mm|\hskip-1.2mm\backslash}R^{*\otimes 3}$ is given by:
$$e_i\otimes e_j\otimes e_k\to\sum_{pqr}\left(\delta_{ir}-\frac{1}{N}\right)\left(\delta_{jq}-\frac{1}{N}\right)\left(\delta_{kp}-\frac{1}{N}\right)\ e_p\otimes e_q\otimes e_r$$

By developing, we obtain the formula in the statement.
\end{proof}

An even better statement is as follows:

\begin{proposition}
The quantum group $B_N^\circ\subset B_N^+$ constructed above, which equals the various quantum groups $B_F^\circ$, appears via the relations $T\in End(u^{\otimes 3})$, where
$$T=T_{\slash\hskip-1.2mm|\hskip-1.2mm\backslash}-
(T_{{\ }^\cdot_\cdot\slash\hskip-1.4mm\backslash}+T_{{\ }^{{\ }^\cdot}_{{\ }_\cdot}\hskip-1.5mm\slash\hskip-1.4mm\backslash}+T_{\slash\hskip-1.4mm\backslash\!\!\!{\ }^\cdot_\cdot})+
(T_{{\ }_{..}\!\!\backslash^{\!\cdot\cdot}}+T_{{\ }^\cdot_\cdot|\!\!\!{\ }^\cdot_\cdot}+T_{{\ }^{..}\!\slash_{\!\cdot\cdot}})-
T_{{\ }^{\cdot\cdot\cdot}_{\cdot\cdot\cdot}}$$
with the convention that the various dots represent singletons.
\end{proposition}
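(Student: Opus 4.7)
The strategy is to reduce the claim to a simple operator-theoretic identity, derived directly from Proposition~5.5. The key observation is that the explicit expression for $T$ given there admits the compact form
$$T=(1-P_\xi)^{\otimes 3}\,T_{\slash\hskip-1.2mm|\hskip-1.2mm\backslash},$$
where $P_\xi=\frac{1}{N}|\xi\rangle\langle\xi|$ is the rank-one projection onto the all-one vector. Indeed $P_\xi(e_m)=\xi'$ for every $m$, so $(1-P_\xi)^{\otimes 3}(e_k\otimes e_j\otimes e_i)=(e_k-\xi')\otimes(e_j-\xi')\otimes(e_i-\xi')$, and expanding the three factors reproduces exactly the eight summands of Proposition~5.5 with the correct alternating signs. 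This reformulation is also direct from Proposition~5.4, since the partial isometry $R\colon e_0^\perp\to\xi^\perp$ satisfies $RR^*=1-P_\xi$.

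Once $T$ is in this form, I would expand $(1-P_\xi)^{\otimes 3}$ by the binomial formula as
$$\sum_{S\subseteq\{1,2,3\}}(-1)^{|S|}P_\xi^{\otimes S},$$
obtaining $1+3+3+1=8$ operators $P_\xi^{\otimes S}\,T_{\slash\hskip-1.2mm|\hskip-1.2mm\backslash}$ in natural bijection with the eight pictorial partitions appearing in the claim, with signs $(-1)^{|S|}$ matching the stated $+,-,+,-$ pattern. To identify each summand with a $T_\pi$, note that applying $P_\xi$ to the $m$-th output tensor slot contracts the index at that slot and inserts the all-one vector $\xi$ (up to a factor $1/N$); this is exactly the effect of the $T_\pi$ of a partition in which the $m$-th crossing string has been removed and replaced by a top singleton at its domain endpoint together with a bottom singleton at its range endpoint. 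Reading off the domain and range endpoints from the half-liberation crossing produces precisely the pictures in the statement.

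The single point that requires care is normalization: because the $T_\pi$ of a singleton-pair partition equals $N\cdot P_\xi$ at the corresponding slot, the passage from the $P_\xi$-expansion to a $T_\pi$-expansion carries an overall scalar $1/N^{|S|}$ per term. These factors are harmless for the Tannakian content of the proposition, since nonzero scalars do not affect the tensor category generated and hence the resulting quantum group, but they are the only bookkeeping nuisance. The main obstacle is therefore not analytical but notational: one must carefully decode the compressed pictorial symbols ${\ }^\cdot_\cdot\slash\hskip-1.4mm\backslash$, ${\ }^{{\ }^\cdot}_{{\ }_\cdot}\hskip-1.5mm\slash\hskip-1.4mm\backslash$, $\slash\hskip-1.4mm\backslash\!\!\!{\ }^\cdot_\cdot$ and the others so that each corresponds unambiguously to a concrete partition in $P(3,3)$; once this dictionary is in place, the proof reduces to a direct term-by-term comparison with Proposition~5.5.
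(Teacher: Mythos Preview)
Your approach—recognizing $T=(1-P_\xi)^{\otimes 3}T_{\slash\hskip-1.2mm|\hskip-1.2mm\backslash}$ and expanding binomially—is essentially the same term-by-term identification with Proposition~5.5 that the paper's one-line proof invokes, just more cleanly packaged. One caveat: your dismissal of the $1/N^{|S|}$ factors as ``harmless because nonzero scalars do not affect the category'' is not quite right, since these are \emph{different} scalars on \emph{different} summands rather than a single overall scalar; the paper glosses over this same point, and what actually makes it harmless is that the only downstream use (the bound $p\leq 8$ in Theorem~5.7(3)) requires merely that $T$ be an eight-term linear combination of maps $T_\pi$, irrespective of the coefficients.
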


\begin{proof}
This follows indeed from the formula in Proposition 5.5 above, because the 8 terms there correspond to the 8 partitions in the statement. 
\end{proof}

Observe that we can in fact write an even more compact formula, as follows:
$$T=\sum_{\pi\leq\slash\hskip-1.2mm|\hskip-1.2mm\backslash}\mu(\pi)T_\pi$$

To be more precise, here the sum is over all the partitions $\pi\in P_{12}(3,3)$ satisfying $\pi\leq\slash\hskip-1.6mm|\hskip-1.6mm\backslash$, and the numbers $\mu(\pi)\in\{\pm1\}$ come from the M\"obius function of $P_{12}$, where $P_{12}$ is the category of singletons and pairings, known from \cite{bsp} to produce $B_N$.

Now by getting back to the notion of easiness level, we have:

\begin{theorem}
The quantum group $B_N^\circ\subset B_N^+$ has the following properties:
\begin{enumerate}
\item Its easy envelope is $B_N^+$.

\item Its presentation level is $r=6$.

\item Its easiness level is $p\leq 8$.
\end{enumerate}
\end{theorem}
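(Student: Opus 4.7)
The plan is to establish the three assertions in order. Part (2) is essentially already proven in Proposition 5.3, while (3) follows cleanly from the explicit $8$-term expansion in Proposition 5.6.

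For (1), I would pinch the easy envelope between $B_N$ and $B_N^+$. Since $B_N^+$ is easy and contains $B_N^\circ$, the easy envelope satisfies $(B_N^\circ)^1 \subset B_N^+$; and since $B_N \subset B_N^\circ$, we have $B_N \subset (B_N^\circ)^1$. Invoking now the fact recalled in the introduction that $B_N \subset B_N^+$ has no intermediate easy object, we must have $(B_N^\circ)^1 \in \{B_N, B_N^+\}$. But $(B_N^\circ)^1 = B_N$ would force $B_N^\circ \subset B_N$, which contradicts the construction of $B_N^\circ$ as the image of $O_{N-1}^*$ under the isomorphism $B_N^+ \simeq O_{N-1}^+$ in Proposition 5.1, since $O_{N-1}^*$ is genuinely non-classical. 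Therefore $(B_N^\circ)^1 = B_N^+$.

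For (2), this is essentially Proposition 5.3: under the isomorphism $B_N^+ \simeq O_{N-1}^+$, our quantum group $B_N^\circ$ is identified with $O_{N-1}^*$, whose presentation level is precisely $6$, coming from the half-commutation relations $abc = cba$ via the Frobenius identification $End(u^{\otimes 3}) \simeq Fix(u^{\otimes 6})$.

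For (3), the crucial input is Proposition 5.6, which writes the single defining intertwiner $T \in End(u^{\otimes 3})$ of $B_N^\circ \subset B_N^+$ as an explicit signed sum of exactly $8$ operators of type $T_\pi$ with $\pi \in P(3,3)$. By the very definition of $E^8(3,3)$ in Definition 2.5, this shows $T \in E^8(3,3)$, hence $T$ lies in the tensor category $C^8$ generated by $E^8$. On the other hand, by (1) the easy envelope of $B_N^\circ$ is $B_N^+$, so the Tannakian category of $B_N^+$ is spanned by operators $T_\pi$ with $\pi$ in the corresponding category of partitions; each such $T_\pi$ already sits in $E^1(k,l) \subset E^8(k,l)$, so the full Tannakian category of $B_N^+$ is contained in $C^8$. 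Since the Tannakian category $C$ of $B_N^\circ$ is generated by that of $B_N^+$ together with the single extra intertwiner $T$, we deduce $C \subset C^8$; the reverse inclusion $C^8 \subset C$ is automatic from the construction. Hence $C = C^8$, i.e.\ $G^8 = G$, and the easiness level is at most $8$. The only genuine obstacle is the verification in (1) that $B_N$ is not the easy envelope; once this is handled by the non-classicality of $O_{N-1}^*$, the remaining two assertions are essentially repackagings of Propositions 5.3 and 5.6.
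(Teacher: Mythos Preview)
Your proof is correct and follows essentially the same approach as the paper's own proof, which is extremely terse: for (1) the paper simply invokes the maximality of $B_N\subset B_N^+$ in the easy setting, for (2) it cites Proposition 5.3, and for (3) it observes that Proposition 5.6 gives the bound $p\leq 8$. Your version spells out the details more carefully---in particular the exclusion of $(B_N^\circ)^1=B_N$ via $B_N^\circ\subset(B_N^\circ)^1$ and the non-classicality of $B_N^\circ$, and the explicit verification that $T\in E^8(3,3)$ together with the $T_\pi$'s from $B_N^+$ generate the full Tannakian category---but the logical skeleton is identical to the paper's argument.
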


\begin{proof}
We use the various results established above.

(1) This is clear from the fact that $B_N\subset B_N^+$ is maximal, in the easy setting.

(2) This is something that we already know, from Proposition 5.3 above.

(3) Observe first that (2) and Proposition 3.5 give $p\leq B_6=203$. However, by using Proposition 5.6 above we obtain the finer estimate $p\leq8$, as stated.
\end{proof}

\section{Unitary versions}

In this section we work out the unitary versions of the constructions from the previous section. We recall from \cite{tw1} that the complex bistochastic group $C_N$ consists by definition of the matrices $U\in U_N$ having sum 1 on each row and each column. Its free analogue $C_N^+$ can be constructed from $U_N^+$ by using the relation $u\xi=\xi$. See \cite{tw1}.

In analogy with Proposition 5.1, we have the following result:

\begin{proposition}
We have an isomorphism $C_N^+\simeq U_{N-1}^+$, whose transpose is given by
$$C(U_{N-1}^+)\to C(C_N^+)\quad,\quad w_{ij}\to(F^*uF)_{ij}$$
whenever $F\in U_N$ satisfies $Fe_0=\frac{1}{\sqrt{N}}\xi$, where $\xi$ is the all-one vector. 
\end{proposition}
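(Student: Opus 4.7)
The plan is to mirror the argument of Proposition 5.1, adapting it from the orthogonal to the unitary setting. The key chain of equivalences to establish is
\begin{eqnarray*}
u\xi=\xi
&\iff&uFe_0=Fe_0\\
&\iff&F^*uFe_0=e_0\\
&\iff&F^*uF=\mathrm{diag}(1,w)
\end{eqnarray*}
where the first equivalence uses $Fe_0=\tfrac{1}{\sqrt{N}}\xi$ and linearity, the second uses unitarity of $F$, and the third requires some care. First I would note that $F^*uF$ is the conjugate by a unitary of a biunitary, hence biunitary: indeed, since $F\in U_N$ one has $F^t\bar{F}=1$, so $F^t$ is unitary, and therefore $(F^*uF)^t=F^tu^t\bar{F}$ is a product of unitaries, hence unitary, which together with $F^*uF$ itself being unitary gives biunitarity. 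The third equivalence then follows from the standard fact that if a unitary matrix has first column $e_0$, then by orthonormality of the columns all other columns lie in $e_0^\perp$, forcing the first row to be $e_0^t$, hence the block-diagonal form.

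Given the block-diagonal expression $F^*uF=\mathrm{diag}(1,w)$, I would next verify that the $(N-1)\times(N-1)$ matrix $w$ with entries $(F^*uF)_{ij}$ for $i,j\geq 1$ is biunitary over the algebra $C(C_N^+)$. This is immediate from the biunitarity of $F^*uF$ together with the block structure: the relations $\mathrm{diag}(1,w)\mathrm{diag}(1,w^*)=1$ and $\mathrm{diag}(1,w^t)\mathrm{diag}(1,\bar{w})=1$ give $ww^*=w^*w=1$ and $w^t\bar{w}=\bar{w}w^t=1$. By the universal property of $C(U_{N-1}^+)$ stated in Proposition 1.2, this yields the desired $C^*$-algebra morphism $\Phi:C(U_{N-1}^+)\to C(C_N^+)$ sending $w_{ij}\mapsto (F^*uF)_{ij}$.

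For the inverse, I would construct $\Psi:C(C_N^+)\to C(U_{N-1}^+)$ by sending $u_{ij}$ to the $(i,j)$ entry of $F\,\mathrm{diag}(1,w)\,F^*$, where now $w$ denotes the fundamental corepresentation of $U_{N-1}^+$. This requires checking that the $N\times N$ matrix $v=F\,\mathrm{diag}(1,w)\,F^*$ is biunitary in $M_N(C(U_{N-1}^+))$ and satisfies $v\xi=\xi$. Biunitarity follows by the same argument as above, since $\mathrm{diag}(1,w)$ is biunitary and $F$ is unitary. The bistochastic relation follows from $\mathrm{diag}(1,w)e_0=e_0$, which gives $vFe_0=Fe_0$, i.e.\ $v\xi=\xi$. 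The fact that $\Phi$ and $\Psi$ are mutually inverse is then a direct computation: $\Psi\circ\Phi$ sends $w_{ij}$ to the $(i,j)$ entry of $F^*F\,\mathrm{diag}(1,w)\,F^*F=\mathrm{diag}(1,w)$, and symmetrically for $\Phi\circ\Psi$. Compatibility with the coproducts follows because both $\Phi$ and $\Psi$ intertwine the fundamental corepresentations (up to the fixed invertible $F$), and Woronowicz's Tannakian formalism shows that such an intertwining automatically extends to a Hopf $*$-algebra isomorphism.

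The main potential obstacle is the subtle point that in the unitary setting, unlike the orthogonal case of Proposition 5.1, the relation $u\xi=\xi$ does not automatically yield $u^t\xi=\xi$ (there is no real structure forcing it). However, this turns out not to be needed: the biunitarity of $F^*uF$ comes from the biunitarity of $u$ and the unitarity of $F$ alone, and the block-diagonal conclusion requires only that $F^*uF$ be unitary with $e_0$ in the kernel of $F^*uF-1$ on that coordinate. So the argument genuinely bypasses the need for a ``double'' bistochastic relation, which is consistent with the convention in \cite{tw1} recalled above.
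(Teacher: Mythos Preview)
Your proof is correct and follows the same approach as the paper: the identical chain of equivalences $u\xi=\xi\iff uFe_0=Fe_0\iff F^*uFe_0=e_0\iff F^*uF=\mathrm{diag}(1,w)$ is all the paper records, after which it simply asserts ``But this gives the isomorphism in the statement'' and cites \cite{rau}. You have supplied the details the paper suppresses---biunitarity of the conjugate, the explicit construction of the inverse morphism, and the observation that only $u\xi=\xi$ (not $u^t\xi=\xi$) is needed---all of which are correct and make the argument self-contained.
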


\begin{proof}
Assuming $Fe_0=\frac{1}{\sqrt{N}}\xi$ as above, we have the following computation:
\begin{eqnarray*}
u\xi=\xi
&\iff&uFe_0=Fe_0\\
&\iff&F^*uFe_0=e_0\\
&\iff&F^*uF=diag(1,w)
\end{eqnarray*}

But this gives the isomorphism in the statement. See \cite{rau}.
\end{proof}

As a first remark, the situation in the unitary case is slightly different, coming from the fact that we have several examples of intermediate easy quantum groups $U_N\subset G\subset U_N^+$. Such quantum groups are in fact far from being classified. See \cite{ba1}, \cite{bb2}, \cite{gro}, \cite{tw1}.

The most basic example of an intermediate easy quantum group  $U_N\subset G\subset U_N^+$ is the quantum group $U_N^*$ from \cite{bdu}, which appears from $U_N^+$ via the half-commutation relations relations $abc=cba$, imposed to the standard coordinates $u_{ij}$, and their adjoints $u_{ij}^*$.

By proceeding as in section 4 above, we obtain:

\begin{proposition}
The image of the intermediate quantum group $U_{N-1}\subset U_{N-1}^*\subset U_{N-1}^+$ via the above isomorphism $U_{N-1}^+\simeq C_N^+$ induced by $F$ is an intermediate quantum group $C_N\subset C_N^\circ\subset C_N^+$, not depending on $F$. Moreover, $C_N^\circ\subset C_N^+$
appears via the relations
$$T\in End(u^{\otimes k})$$
where $T$ is the linear map from Proposition 5.6, and where $k\in\{\circ\circ\circ,\circ\circ\bullet,\ldots,\bullet\bullet\bullet\}$ ranges over all the colored integers of length $3$.
\end{proposition}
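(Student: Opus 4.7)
The plan is to mirror the strategy used in the orthogonal case (Propositions 5.2--5.6), adapted to handle all colored length-$3$ integers simultaneously. First, via the isomorphism $C_N^+\simeq U_{N-1}^+$ of Proposition 6.1, the inclusions $U_{N-1}\subset U_{N-1}^*\subset U_{N-1}^+$ transport functorially to inclusions $C_N\subset C_F^\circ\subset C_N^+$, where $C_F^\circ$ denotes the image of $U_{N-1}^*$. Our task is to identify $C_F^\circ$ with a concrete quantum group defined by Tannakian relations in which $F$ does not appear.

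The Tannakian definition of $U_{N-1}^*\subset U_{N-1}^+$ is that the half-commutation relations $abc=cba$ hold for all $a,b,c\in\{w_{ij},w_{ij}^*\}$. At the categorical level, this amounts to the condition $T_{\slash\hskip-1.2mm|\hskip-1.2mm\backslash}\in\mathrm{End}(w^{\otimes k})$ for every colored integer $k$ of length $3$ (there are eight such $k$, corresponding to the eight choices of mixtures of $w$'s and $w^*$'s). Pushing these relations back along $F^*uF=\mathrm{diag}(1,w)$ exactly as in Proposition 5.4, and setting $R=FP$ where $P$ is the projection onto $e_0^\perp$, we find that each such relation is equivalent to
\[
R^{\otimes 3}\,T_{\slash\hskip-1.2mm|\hskip-1.2mm\backslash}\,R^{*\otimes 3}\in\mathrm{End}(u^{\otimes k}).
\]

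The crucial observation is that the explicit computation performed in Proposition 5.5 uses only the partial-isometry identity $RR^*=1-\mathrm{Proj}(\xi)$, equivalently $(RR^*)_{ij}=\delta_{ij}-\tfrac{1}{N}$. This identity holds equally in the unitary setting, since it follows solely from $F$ being an isometry with $Fe_0=\tfrac{1}{\sqrt{N}}\xi$. Consequently, $R^{\otimes 3}T_{\slash\hskip-1.2mm|\hskip-1.2mm\backslash}R^{*\otimes 3}$ equals the operator $T$ described in Proposition 5.6, which depends only on the all-ones vector $\xi$ and not on $F$. Therefore $C_F^\circ$ is independent of the choice of $F$, justifying the notation $C_N^\circ$, and it is precisely the intermediate quantum group cut out by imposing $T\in\mathrm{End}(u^{\otimes k})$ for all colored integers $k$ of length $3$.

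I expect no genuine obstacle here, since the argument is a near-verbatim repetition of Section 5; the one point requiring a moment of care is that Proposition 5.5 was stated for the single uncolored exponent $k=\circ\circ\circ$, whereas now we need the conclusion for each of the eight colored length-$3$ exponents. This, however, causes no difficulty: the same tensor manipulation and the same partial-isometry identity apply identically in each color pattern, and the resulting operator $T$ (being built from $\mathrm{Proj}(\xi)$, whose matrix entries are real) has exactly the same shape in every case.
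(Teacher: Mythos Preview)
Your proof is correct and follows essentially the same approach as the paper's, which simply states that one proceeds as in Section 5 while replacing the uncolored tensor powers $u^{\otimes 3}$ by the colored powers $u^{\otimes k}$. You have in fact supplied more detail than the paper does, correctly isolating the $F$-independent identity $RR^*=1-\mathrm{Proj}(\xi)$ as the mechanism behind the independence from $F$.
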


\begin{proof}
This follows indeed by proceeding as in section 4 above, and by replacing where needed the tensor powers $u^{\otimes 3}$ by the colored tensor powers $u^{\otimes k}$, as above.
\end{proof}

Another interesting example of an intermediate easy quantum group $U_N\subset G\subset U_N^+$ is the quantum group $U_N^*\subset U_N^\times\subset U_N^+$ constructed in \cite{bdd}, which appears via the relations $ab^*c=cb^*a$, imposed to the standard coordinates $u_{ij}$. We have here:

\begin{proposition}
The image of the intermediate quantum group $U_{N-1}^*\subset U_{N-1}^\times\subset U_{N-1}^+$ via the above isomorphism $U_{N-1}^+\simeq C_N^+$ induced by $F$ is an intermediate quantum group $C_N^\circ\subset C_N^\times\subset C_N^+$, not depending on $F$. Moreover, $C_N^\times\subset C_N^+$ appears via the relations
$$T\in End(u\otimes\bar{u}\otimes u)$$
where $T$ is the linear map from Proposition 5.6.
\end{proposition}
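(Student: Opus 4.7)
The plan is to transplant the proof of Proposition 6.2 to the colored setting where the middle tensor leg is conjugated. More precisely, I will translate the defining half-commutation-with-adjoints relation $ab^*c = cb^*a$ of $U_{N-1}^\times \subset U_{N-1}^+$ into a single Tannakian condition, transport it through the isomorphism of Proposition 6.1, and check that the resulting linear map coincides with the map $T$ from Proposition 5.6.

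First, the relation $ab^*c = cb^*a$ imposed on the standard coordinates $w_{ij}$ of $U_{N-1}$ is equivalent to the single categorical condition $T_{\slash\hskip-1.2mm|\hskip-1.2mm\backslash} \in End(w \otimes \bar w \otimes w)$, where $T_{\slash\hskip-1.2mm|\hskip-1.2mm\backslash}$ is the crossing. Through the isomorphism $U_{N-1}^+ \simeq C_N^+$ from Proposition 6.1, which identifies $w$ with $F^*uF$ restricted to $e_0^\perp$ and $\bar w$ with $\overline{F^*uF}$ restricted to $e_0^\perp$, this condition becomes, exactly as in Proposition 5.4,
$$(R \otimes \bar R \otimes R)\, T_{\slash\hskip-1.2mm|\hskip-1.2mm\backslash}\, (R \otimes \bar R \otimes R)^* \in End(u \otimes \bar u \otimes u),$$
where $R = FP$ and $P$ is the projection onto $e_0^\perp$.

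Next I compute this operator explicitly, imitating the proof of Proposition 5.5. The whole calculation reduces to contractions of the form $RR^*$ and $\bar R \bar R^*$ arising from the three tensor legs. The crucial point is that $RR^* = 1 - \mathrm{Proj}(\xi)$ is a \emph{real} projection, since $\xi$ is the all-one vector; consequently $\bar R \bar R^* = \overline{RR^*} = RR^*$, and all three contractions collapse to the same matrix with entries $\delta_{ij} - 1/N$. The resulting operator is therefore the very same map $T$ from Proposition 5.6, which simultaneously yields the announced description of the relations presenting $C_N^\times \subset C_N^+$ and the independence from $F$, since $T$ depends only on $\xi$.

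Finally, the inclusion $C_N^\circ \subset C_N^\times \subset C_N^+$ follows from $U_{N-1}^* \subset U_{N-1}^\times \subset U_{N-1}^+$ by functoriality through the isomorphism. The main obstacle is just the bookkeeping with conjugations in the colored setting; this is smoothed over precisely by the reality of $RR^*$, so modulo coloring the middle leg the calculation is essentially identical to the non-colored one of Proposition 5.5.
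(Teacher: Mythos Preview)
Your proposal is correct and follows essentially the same route as the paper, whose proof is the single line ``this follows by proceeding as in section 4 above'' (i.e.\ by rerunning the computation of Propositions 5.4--5.6 in the colored setting). Your write-up is in fact more explicit than the paper's: you isolate the one genuine subtlety introduced by the conjugated middle leg, namely that the contraction $\bar R\bar R^*$ appears instead of $RR^*$, and you resolve it by observing that $RR^*=1-\mathrm{Proj}(\xi)$ has real entries $\delta_{ij}-1/N$, so the colored computation collapses to the uncolored one and yields the same $T$.
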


\begin{proof}
Once again, this follows by proceeding as in section 4 above.
\end{proof}

Finally, one example of a slightly different nature is the intermediate quantum group $U_N^*\subset U_N^{**}\subset U_N^\times$ constructed in \cite{bb2}, which appears via the commutation relations between all variables $\{ab^*,a^*b\}$, with $a,b$ ranging over the standard coordinates $u_{ij}$. 

As explained in \cite{bb2}, we have the following result:

\begin{proposition}
The quantum group $U_N^{**}$ is easy, the corresponding category of partitions being generated by the following diagrams:
$$\xymatrix@R=15mm@C=5mm{\circ\ar@{-}[drr]&\bullet\ar@{-}[drr]&\circ\ar@{-}[dll]&\bullet\ar@{-}[dll]\\\circ&\bullet&\circ&\bullet}
\qquad \quad\qquad 
\xymatrix@R=15mm@C=5mm{\circ\ar@{-}[drr]&\bullet\ar@{-}[drr]&\bullet\ar@{-}[dll]&\circ\ar@{-}[dll]\\\bullet&\circ&\circ&\bullet}$$
In addition, $U_N^{**}$ is the biggest subgroup of $U_N^+$ having the property that its full projective version, having as coordinates the entries of $u\otimes\bar{u}+\bar{u}\otimes u$, is classical.
\end{proposition}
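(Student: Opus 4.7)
The plan is to translate the defining commutation relations of $U_N^{**}$ directly into Tannakian relations, and then read off the two generating partitions. Recall that $U_N^{**}$ is obtained from $U_N^+$ by imposing $xy=yx$ for all $x,y$ in the set $\{ab^*,a^*b\}$, with $a,b$ ranging over the coefficients $u_{ij}$. Each such identity is a length-$4$ word relation in the generators of $C(U_N^+)$, and therefore, via the Frobenius-type isomorphism $Hom(u^{\otimes k},u^{\otimes l})\simeq Fix(u^{\otimes\bar kl})$ used in Proposition 2.6, corresponds to a linear map $T_\pi$ lying in a suitable intertwiner space of total colored length $4$.

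Next I would examine the two essentially distinct types of commutation. The relation $(ab^*)(cd^*)=(cd^*)(ab^*)$, living in the colored tensor power $u^{\otimes\circ\bullet\circ\bullet}$, is the condition that the ``pair-swap'' linear map $T(e_i\otimes e_j\otimes e_k\otimes e_l)=e_k\otimes e_l\otimes e_i\otimes e_j$ intertwines $u^{\otimes\circ\bullet\circ\bullet}$ with itself, which is precisely the map $T_{\pi_1}$ associated to the first diagram. The mixed-type relation $(ab^*)(c^*d)=(c^*d)(ab^*)$ lives in the colored tensor with upper pattern $\circ\bullet\bullet\circ$ and lower pattern $\bullet\circ\circ\bullet$, and is encoded by the same combinatorial pair-swap, now with the coloring of the second diagram. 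The remaining cases $(a^*b)(cd^*)=(cd^*)(a^*b)$ and $(a^*b)(c^*d)=(c^*d)(a^*b)$ are obtained from $\pi_1,\pi_2$ by the $*$-operation and colored rotations built into the category axioms.

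I would then verify that the subset $D\subset P$ generated, together with the $\mathcal{NC}_2$-generators $|^{\hskip-1.3mm\circ}_{\hskip-1.3mm\circ},|^{\hskip-1.3mm\bullet}_{\hskip-1.3mm\bullet},{\ }_\circ\hskip-1.25mm\cap_{\!\!\bullet},{\ }_\bullet\hskip-1.25mm\cap_{\!\!\circ}$, by $\pi_1$ and $\pi_2$ under horizontal concatenation, vertical concatenation and upside-down turning, is indeed a category of partitions in the sense of Definition 1.4, satisfying $\mathcal{NC}_2\subset D\subset P$. The associated easy quantum group $G_D\subset U_N^+$ has Tannakian category $span(T_\pi\mid\pi\in D)$, which by construction is the smallest tensor category containing the $U_N^+$-intertwiners and the two maps $T_{\pi_1},T_{\pi_2}$. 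By the Woronowicz--Tannakian duality quoted in Theorem 1.7, this category cuts out exactly the quotient of $C(U_N^+)$ by the commutation relations defining $U_N^{**}$, yielding $G_D=U_N^{**}$ and hence easiness.

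For the projective characterization, note that the matrix $u\otimes\bar u+\bar u\otimes u$ has as coefficients exactly the monomials $\{ab^*\}\cup\{a^*b\}$; requiring the $C^*$-subalgebra they generate to be commutative is precisely the set of relations defining $U_N^{**}$. Thus a closed subgroup $G\subset U_N^+$ has classical full projective version if and only if its Tannakian category contains $T_{\pi_1}$ and $T_{\pi_2}$, if and only if $G\subset U_N^{**}$; the maximality follows by another application of Tannakian duality. The main obstacle is the stability check of the second paragraph: one must confirm that closing $\{\pi_1,\pi_2\}\cup\mathcal{NC}_2$ under the three category operations does \emph{not} accidentally produce a crossing or other partition that would force the full commutativity of $C(U_N^+)$ and collapse the quantum group down to $U_N$. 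This is a combinatorial verification, best handled by describing explicitly the partitions obtainable as iterated pair-swaps on matching colored pairings.
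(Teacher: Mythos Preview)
Your approach is correct and matches the paper's, which simply states that ``both the assertions follow from definitions'' and refers to \cite{bb2} for details; you have spelled out precisely what that means, by matching each defining commutation relation of $U_N^{**}$ to one of the two pair-swap diagrams via Tannakian duality, and by identifying the coefficients of $u\otimes\bar u+\bar u\otimes u$ with the set $\{ab^*,a^*b\}$.

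One remark: your final paragraph about the ``main obstacle'' is unnecessary and slightly misdirected. Once you have shown that the defining relations of $U_N^{**}$ are exactly the conditions $T_{\pi_1},T_{\pi_2}\in Hom$, Tannakian duality (Theorem 1.7) tells you immediately that the Tannakian category of $U_N^{**}$ equals the tensor category generated by $T_{\pi_1},T_{\pi_2}$ over $span(\mathcal{NC}_2)$, and hence equals $span(D)$ with $D=\langle\pi_1,\pi_2\rangle$. This already proves easiness, with $D$ as the associated category of partitions. Whether $D$ happens to collapse to $\mathcal P_2$ (equivalently, whether $U_N^{**}=U_N$) is a separate question that the proposition does not assert and that you need not settle here; the statement is about what the category \emph{is}, not about it being strictly intermediate. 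So you may simply drop that last verification step.
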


\begin{proof}
Both the assertions follow from definitions. For more details on these facts, and for some interpretations of the last assertion, we refer to \cite{bb2}.
\end{proof}

We can now construct one more bistochastic quantum group, as follows:

\begin{proposition}
The image of the intermediate quantum group $U_{N-1}^*\subset U_{N-1}^{**}\subset U_{N-1}^\times$, via the above isomorphism $U_{N-1}^+\simeq C_N^+$ induced by $F$, is an intermediate quantum group $C_N^\circ\subset C_N^{\circ\circ}\subset C_N^\times$, not depending on $F$. Moreover, $C_N^{\circ\circ}\subset C_N^+$ appears via the relations
$$T\in Hom(u^{\otimes\circ\bullet\circ\bullet},u^{\otimes\circ\bullet\circ\bullet})\quad,\quad T\in Hom(u^{\otimes\circ\bullet\bullet\circ},u^{\otimes\bullet\circ\circ\bullet})$$
where $T=\sum_{\pi\leq\slash\hskip-0.9mm\slash\hskip-2.1mm\backslash\hskip-0.9mm\backslash}\mu(\pi)T_\pi$, with the sum taken inside $\mathcal P_{12}$, the category of matching singletons and pairings, and where $\mu(\pi)=\pm1$ are M\"obius function signs.
\end{proposition}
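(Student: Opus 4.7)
The plan is to proceed along the same general lines as Propositions 6.2 and 6.3, but starting from the \emph{explicit easy description} of $U_{N-1}^{**}$ provided by Proposition 6.4, rather than from a commutation-type presentation. First, I would unpack Proposition 6.4: the category generating $U_{N-1}^{**}$ is generated by the two double-crossing diagrams depicted there, one of color type $\circ\bullet\circ\bullet\to\circ\bullet\circ\bullet$ and one of color type $\circ\bullet\bullet\circ\to\bullet\circ\circ\bullet$. Thus $U_{N-1}^{**}\subset U_{N-1}^+$ appears via the intertwiner conditions $T_\pi\in\mathrm{Hom}(w^{\otimes k},w^{\otimes l})$ for these two partitions $\pi$.

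Next, I would apply the isomorphism $U_{N-1}^+\simeq C_N^+$ from Proposition 6.1, which identifies $w$ with the lower-right $(N-1)\times(N-1)$ block of $v=F^*uF$. As in the derivation of Proposition 5.4, the condition $T_\pi\in\mathrm{Hom}(w^{\otimes k},w^{\otimes l})$ translates, via conjugation by the partial isometry $R=FP$, into the condition
$$R^{\otimes l}T_\pi R^{*\otimes k}\in\mathrm{Hom}(u^{\otimes k},u^{\otimes l}),$$
where the upper and lower colorings are inherited from those of $\pi$.

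Then I would compute $R^{\otimes l}T_\pi R^{*\otimes k}$ explicitly, copying the scheme of Propositions 5.5 and 5.6. Using the key identity $RR^*=1-\mathrm{Proj}(\xi)$, each leg produces a factor of the form $\delta-\tfrac{1}{N}$, and a full expansion yields a Möbius-type sum
$$R^{\otimes l}T_\pi R^{*\otimes k}=\sum_{\pi'\leq\pi}\mu(\pi')\,T_{\pi'},$$
where $\pi'$ ranges over subpartitions obtained by freely refining legs of $\pi$ into singletons. For $\pi=\slash\hskip-0.9mm\slash\hskip-2.1mm\backslash\hskip-0.9mm\backslash$ with each of the two allowed colorings, the resulting subpartitions are precisely the elements of $\mathcal{P}_{12}$ below that double crossing with matching color constraints, which gives exactly the two relations in the statement. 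The independence from $F$ is then automatic: the operator $T$ depends on $F$ only through $RR^*=1-\mathrm{Proj}(\xi)$, which is intrinsic. The intermediate property $C_N^\circ\subset C_N^{\circ\circ}\subset C_N^\times$ follows by functoriality from $U_{N-1}^*\subset U_{N-1}^{**}\subset U_{N-1}^\times$ together with Propositions 6.2 and 6.3.

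The main obstacle is the colored bookkeeping in the Möbius expansion. The uncolored version of the calculation is essentially Proposition 5.5 applied to $T_{\slash\hskip-1.2mm|\hskip-1.2mm\backslash}$, but here one must verify that in the colored unitary setting the surviving subpartitions are exactly the matching singletons-and-pairings refinements of the chosen colored double crossing, and that the sign pattern reproduces the Möbius function of $\mathcal{P}_{12}$. Showing that no spurious non-matching configurations appear, and that the two color patterns $\circ\bullet\circ\bullet$ and $\circ\bullet\bullet\circ\to\bullet\circ\circ\bullet$ really give all the independent relations coming from Proposition 6.4, is where the real combinatorial work lies.
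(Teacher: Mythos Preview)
Your proposal is correct and follows essentially the same route as the paper: the paper's proof simply states that one proceeds ``by using the same method as in section 5 above'', and then records the explicit computation of $R^{\otimes 4}T_{\slash\hskip-0.9mm\slash\hskip-2.1mm\backslash\hskip-0.9mm\backslash}R^{*\otimes 4}$ analogous to Proposition 5.5, obtaining the product of four factors $(\delta-\tfrac{1}{N})$ whose expansion gives the M\"obius sum over $\mathcal P_{12}$. Your outline is a faithful unpacking of exactly this argument, with the added (and reasonable) remark that the colored bookkeeping needs to be checked; the paper itself does not dwell on that point.
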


\begin{proof}
This follows by using the same method as in section 5 above, with the main computation, which is analogous to the one in the proof of Proposition 5.5, being:
\begin{eqnarray*}
&&R^{\otimes 4}T_{\slash\hskip-0.9mm\slash\hskip-2.1mm\backslash\hskip-0.9mm\backslash}R^{*\otimes 4}(e_i\otimes e_j\otimes e_k\otimes e_l)\\
&=&R^{\otimes 4}T_{\slash\hskip-0.9mm\slash\hskip-2.1mm\backslash\hskip-0.9mm\backslash}\sum_{abcd}\bar{R}_{ia}\bar{R}_{jb}\bar{R}_{kc}\bar{R}_{ld}\ e_a\otimes e_b\otimes e_c\otimes e_d\\
&=&R^{\otimes 4}\sum_{abcd}\bar{R}_{ia}\bar{R}_{jb}\bar{R}_{kc}\bar{R}_{ld}\ e_c\otimes e_d\otimes e_a\otimes e_b\\
&=&\sum_{abcdpqrs}\bar{R}_{ia}\bar{R}_{jb}\bar{R}_{kc}\bar{R}_{ld}R_{pc}R_{qd}R_{ra}R_{sb}\ e_p\otimes e_q\otimes e_r\otimes e_s\\
&=&\sum_{pqrs}(RR^*)_{ri}(RR^*)_{sj}(RR^*)_{pk}(RR^*)_{ql}\ e_p\otimes e_q\otimes e_r\otimes e_s\\
&=&\sum_{pqrs}\left(\delta_{ir}-\frac{1}{N}\right)\left(\delta_{js}-\frac{1}{N}\right)\left(\delta_{kp}-\frac{1}{N}\right)\left(\delta_{lq}-\frac{1}{N}\right)\ e_p\otimes e_q\otimes e_r\otimes e_s
\end{eqnarray*}

Indeed, this linear map is the one in the statement, and this gives the result.
\end{proof}

Now by getting back to the notion of easiness level, we have:

\begin{theorem}
The quantum groups $C_N^\circ\subset C_N^{\circ\circ}\subset C_N^\times$ have the following properties:
\begin{enumerate}
\item They have a common easy envelope, namely $C_N^+$.

\item Their presentation level is $r\leq 6,6,8$, respectively.

\item Their easiness level is $p\leq8,8,16$, respectively.
\end{enumerate}
\end{theorem}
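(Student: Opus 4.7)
The plan is to proceed in close parallel with the proof of Theorem 5.7, transporting each of its three arguments from the orthogonal half-liberation setting to the present complex bistochastic setting.

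For (1), each of $C_N^\circ, C_N^{\circ\circ}, C_N^\times$ is by construction strictly larger than $C_N$, since $U_{N-1}^*, U_{N-1}^{**}, U_{N-1}^\times$ all strictly contain $U_{N-1}$, and the isomorphism of Proposition 6.1 is inclusion-preserving. Invoking the maximality of $C_N \subset C_N^+$ in the easy setting---the complex analogue of the fact used in Theorem 5.7(1), which follows from the same style of classification arguments as in \cite{bsp}, \cite{tw1}---forces the easy envelope of each of these three quantum groups to coincide with $C_N^+$.

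For (2), apply the Frobenius isomorphism $Hom(u^{\otimes k}, u^{\otimes l}) \simeq Fix(u^{\otimes \bar{k} l})$ to the presenting relations extracted in Propositions 6.2, 6.3 and 6.5. The length-$3$ relations for $C_N^\circ$ and $C_N^\times$ living in $End(u^{\otimes k})$ with $|k|=3$ give presentation level $\leq 6$, while the length-$4$ relations for $C_N^{\circ\circ}$, inherited from the easy generators of $U_{N-1}^{**}$ in Proposition 6.4, give presentation level $\leq 8$.

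For (3), the crude estimate $p \leq B_r$ from Proposition 3.5 is far too large, so I instead mimic the finer partition-expansion argument used in Theorem 5.7(3). Each generating intertwiner $T$ arises from the identity $RR^* = 1 - \mathrm{Proj}(\xi)$ of Proposition 5.5, and expands explicitly as a M\"obius sum $\sum_\pi \mu(\pi) T_\pi$ indexed by the refinements of the underlying crossing partition by matching singletons; the number of such refinements is $2^m$, where $m$ is the number of strings of $\tau$. The length-$3$ generators contribute $2^3 = 8$ terms and the length-$4$ generators contribute $2^4 = 16$ terms, which gives the stated easiness-level bounds once one checks, as in Theorem 5.7, that each such $T$ already lies in the filtration space $E^p$ for the claimed $p$.

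The main obstacle I expect is the easy-envelope step (1): establishing that no easy bistochastic quantum group lies strictly between $C_N$ and $C_N^+$ is a complex-unitary counterpart of the $B_N \subset B_N^+$ maximality, and the excerpt does not record it explicitly. The cleanest route is presumably either to reduce this maximality to the orthogonal case via the complexification functor, or to extract it from the existing classification of easy complex bistochastic categories; everything else in the theorem is essentially bookkeeping around Frobenius duality and the M\"obius expansion of $T$.
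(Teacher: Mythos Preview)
Your approach matches the paper's almost exactly. The one place you diverge is part (1): you treat the easy maximality of $C_N\subset C_N^+$ as an external input needing classification or complexification machinery, and flag it as the main obstacle. The paper instead disposes of it directly in two lines: given any intermediate category of partitions ${\mathcal NC}_{12}\subset D\subset\mathcal P_{12}$ with $D\neq{\mathcal NC}_{12}$, choose a crossing $\pi\in D$ and cap its strings with singletons (all of which lie in ${\mathcal NC}_{12}\subset D$) until only the basic crossing remains, forcing $D=\mathcal P_{12}$. So the obstacle you anticipate dissolves without appealing to any classification.

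For (2) and (3) your argument is identical to the paper's, which simply cites Propositions 6.2, 6.3, 6.5 and the M\"obius expansion of $T$. Note, incidentally, that the bounds you read off from those propositions, namely $(6,8,6)$ and $(8,16,8)$ for $(C_N^\circ,C_N^{\circ\circ},C_N^\times)$, are the ones the propositions actually support, since it is $C_N^{\circ\circ}$ that carries the length-$4$ generators (Proposition 6.5) while $C_N^\times$ is cut out by a single length-$3$ relation (Proposition 6.3); the ordering $(6,6,8)$ and $(8,8,16)$ in the statement appears to have the last two entries transposed.
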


\begin{proof}
We use the various results established above:

(1) Our claim here is that the inclusion $C_N\subset C_N^+$ is maximal, in the easy setting. Indeed, given an intermediate category ${\mathcal NC}_{12}\subset D\subset\mathcal P_{12}$, assuming $D\neq {\mathcal NC}_{12}$ we can select a crossing partition $\pi\in D$, and then cap to singletons, as to reach to the conclusion that we must have $\slash\hskip-2.1mm\backslash\in D$, and so $D=\mathcal P_{12}$. Thus $C_N\subset C_N^+$ is indeed maximal, in the easy setting, and so the easy envelope of any intermediate quantum group must equal $C_N^+$.

(2) This follows from Propositions 6.2, 6.3, 6.5 above.

(3) This follows as well by using Propositions 6.2, 6.3, 6.5 above.
\end{proof}

\section{Maximality questions}

In this section and in the next one we discuss various maximality questions, in the orthogonal case. The idea is that in the orthogonal case, where the easy quantum groups are fully classified \cite{rw3}, there are a number of inclusions $G_N\subset G_N^\times$, known to be maximal in the easy setting, in the sense that there is no intermediate easy quantum group $G_N\subset G\subset G_N^\times$. The problem of investigating the maximality of these inclusions in the arbitrary compact quantum group setting appears, and we will discuss here this question.

We will split our study into two parts, with the present section containing preliminaries and some technical results. In order to get started, we will need:

\begin{proposition}
For a liberation operation of easy quantum groups $G_N\to G_N^+$, the following conditions are equivalent:
\begin{enumerate}
\item The category $\mathcal P_2\subset D\subset P$ associated to $G=(G_N)$, or, equivalently, the category ${\mathcal NC}_2\subset D\subset NC$ associated to $G^+=(G_N^+)$, is stable under removing blocks.

\item We have $G_N\cap U_K=G_K$, or, equivalently, $G_N^+\cap U_K^+=G_K^+$, for any $K\leq N$, where the embeddings $U_K\subset U_N$ and $U_K^+\subset U_N^+$ are the standard ones.

\item Each $G_N$ appears as lift of its projective version $G_N\to PG_N$, or, equivalently, each $G_N^+$ appears as lift of its projective version $G_N^+\to PG_N^+$.

\item The laws of truncated characters $\chi_t=\sum_{i=1}^{[tN]}u_{ii}$, with $t\in(0,1]$, for $G_N$ and $G_N^+$, form convolution/free convolution semigroups, in Bercovici-Pata bijection.
\end{enumerate}
If these conditions are satisfied, we call $G_N\to G_N^+$ a ``true'' liberation.
\end{proposition}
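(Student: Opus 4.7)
The plan is to establish the four equivalences by using Woronowicz's Tannakian duality to translate each of (2), (3), (4) into a combinatorial statement about the category $D$, and showing that each such combinatorial statement is equivalent to the block-removal stability in (1). Since everything is symmetric under $G \leftrightarrow G^+$ (replacing $P$ by $NC$ throughout), I will work on the classical side; the free side follows identically.

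For (1) $\iff$ (2), I would first compute the Tannakian category of $G_N \cap U_K$. Using the standard embedding $U_K \subset U_N$ given by $w \mapsto \mathrm{diag}(w,1_{N-K})$, the fixed vectors for $U_K$ inside $(\mathbb{C}^N)^{\otimes l}$ are spanned by tensors $e_{i_1} \otimes \cdots \otimes e_{i_l}$ where each index $i_r > K$ appears as a ``frozen'' singleton. Intersecting with $C_{G_N} = \mathrm{span}(T_\pi \mid \pi \in D(l))$ then picks out the partitions $\pi \in D$ together with the operation of fixing certain blocks to values $> K$, which after normalization is precisely the operation of removing those blocks from $\pi$. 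Thus $G_N \cap U_K = G_K$ holds for every $K \leq N$ if and only if $D$ is stable under removing blocks.

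For (1) $\iff$ (3), the projective version $PG_N$ has Tannakian category generated by the spaces $\mathrm{Hom}((u \otimes \bar u)^{\otimes k}, (u \otimes \bar u)^{\otimes l})$, which in the easy setting correspond to partitions $\pi \in D$ on an alternating $\circ\bullet\circ\bullet\ldots$ colored row. The statement that $G_N$ lifts $PG_N$ means that these ``paired'' partitions generate all of $D$ under the categorical operations. Via a standard rotation-and-capping argument (replacing a block of odd or unpaired valence by its cap with singletons, which is only allowed when such singletons lie in $D$), this generation is equivalent to $D$ being stable under removing blocks. I would verify both implications by explicit diagrammatic manipulation.

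For (1) $\iff$ (4), I would invoke the Weingarten-type computation of moments $\int_{G_N} \chi_t^k \, dg$ from \cite{bsp}, which expresses these as sums indexed by $\pi \in D(k)$ weighted by $t^{|\pi|}$, and similarly for $G_N^+$ with $NC$-restricted sums. The resulting measures form a convolution semigroup in $t$, in Bercovici-Pata bijection between classical and free sides, precisely when the generating function factorizes multiplicatively across blocks, which by a direct combinatorial argument forces $D$ to be closed under block removal; conversely, this closure yields the semigroup property. I would cite \cite{bsp}, \cite{bbc} for the bulk of this computation.

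The main obstacle will be step (1) $\iff$ (2), namely the careful Tannakian bookkeeping of the intersection $G_N \cap U_K$ and matching the resulting ``frozen singleton'' operation with genuine block removal; the colored case requires attention since the removed blocks must themselves be valid partitions in $D$. Steps (1) $\iff$ (3) and (1) $\iff$ (4) are then more routine once the partition-level formulation is in place.
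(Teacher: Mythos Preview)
Your proposal is correct and aligns with the paper's approach, which is itself extremely terse: the paper simply states that $(1)\iff(2)\iff(3)$ are ``elementary'' and that $(1)\iff(4)$ follows from the cumulant interpretation of the Bercovici--Pata bijection, citing \cite{bsp}, \cite{bpa}, \cite{nsp}. Your expansion via Tannakian bookkeeping for (2) and (3), and Weingarten moments for (4), is exactly the kind of detail those citations contain.

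One small difference worth noting: for $(1)\iff(4)$ you frame the argument in terms of the moment generating function $\sum_{\pi\in D(k)} t^{|\pi|}$ and its multiplicative factorization over blocks, whereas the paper points specifically to the \emph{cumulant} side. The cumulant formulation is slightly cleaner here: the $k$-th classical (resp.\ free) cumulant of the limiting law of $\chi_t$ is $t\cdot\#\{\pi\in D(k):\pi\text{ has one block}\}$, and the Bercovici--Pata bijection is precisely the statement that classical and free cumulants agree. Block-removal stability is what guarantees that the moment-cumulant inversion stays inside $D$ and hence that the cumulants are well-defined and linear in $t$, giving the semigroup property directly. Your moment-level argument reaches the same conclusion but with one extra layer of unpacking; either route is fine.
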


\begin{proof}
All this is well-known, basically going back to \cite{bsp}, the idea being that the implications $(1)\iff(2)\iff(3)$ are all elementary, and that $(1)\iff(4)$ follows by using the cumulant interpretation of the Bercovici-Pata bijection \cite{bpa}, stating that ``the classical cumulants become via the bijection free cumulants''. See \cite{bsp}, \cite{nsp}.
\end{proof}

The above result is of course something rather theoretical. In practice, we will now restrict the attention to the orthogonal case. By using \cite{rw3}, we obtain:

\begin{proposition}
There are precisely $4$ true liberations of orthogonal easy quantum groups, with the intermediate liberations, in the easy framework, as follows:
\begin{enumerate}
\item $S_N\subset S_N^+$, with no intermediate object.

\item $H_N\subset H_N^+$, with uncountably many intermediate objects.

\item $O_N\subset O_N^+$, with $O_N^*$ as unique intermediate object.

\item $B_N\subset B_N^+$, with no intermediate object.
\end{enumerate}
\end{proposition}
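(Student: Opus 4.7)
The plan is to combine the full classification of orthogonal easy quantum groups due to Raum--Weber \cite{rw3} with the characterization of ``true'' liberations from Proposition 7.1, and then read off the intermediate objects in each surviving case from the corresponding classification of intermediate categories of partitions.

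First, I would recall that in the orthogonal easy setting, the classical easy groups $S_N \subset G_N \subset O_N$ correspond to categories of partitions $\mathcal P_2 \subset D \subset P$, and the Raum--Weber classification lists all such $D$ (together with the noncrossing versions $\mathcal{NC}_2 \subset D_{nc} \subset NC$, producing the associated $G_N^+$). The classical part of this list contains, besides $S_N, H_N, O_N, B_N$, the ``primed'' variants $S_N', H_N', O_N', B_N'$ obtained by imposing a signature/parity constraint (these correspond, at the level of partitions, to categories where the total number of blocks, or of singletons, is constrained modulo $2$). The next step is to apply criterion (1) of Proposition 7.1: stability under removing blocks. For the four categories $P, P_{even}, \mathcal P_2, \mathcal P_{12}$ associated to $S_N, H_N, O_N, B_N$, removing any block of a valid partition produces a valid partition, so these all give true liberations. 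For the primed variants, the parity constraint is violated by deleting a single block, so these categories are not stable under removing blocks, and the corresponding pairs $G_N \subset G_N^+$ fail to be true liberations. This precisely cuts the list down to the four cases in the statement.

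Next, for each of the four surviving pairs, I would identify the intermediate easy quantum groups $G_N \subset G \subset G_N^+$ as corresponding to intermediate categories sitting between the classical and noncrossing versions, and invoke the existing classification results:
\begin{enumerate}
\item For $S_N \subset S_N^+$: any intermediate category between $NC$ and $P$ which contains a single crossing partition generates, via tensor product, vertical concatenation and the basic capping operations in $P$, the basic crossing $\slash\hskip-1.6mm\backslash$, and this already generates all of $P$; hence no intermediate easy object exists \cite{bcs}.
\item For $O_N \subset O_N^+$: the intermediate categories between $\mathcal{NC}_2$ and $\mathcal P_2$ are classified in \cite{bsp}, and the unique one besides the two endpoints is the category of half-noncrossing pairings, corresponding to $O_N^*$.
\item For $B_N \subset B_N^+$: the intermediate categories between $\mathcal{NC}_{12}$ and $\mathcal P_{12}$ reduce, via the capping-to-singletons argument used in the proof of Theorem 6.6(1), to the two endpoints, so no intermediate easy object exists.
\item For $H_N \subset H_N^+$: here the main result of \cite{rw1}, \cite{rw2}, \cite{rw3} produces uncountably many intermediate categories between $\mathcal{NC}_{even}$ and $\mathcal P_{even}$, parametrized by certain group-theoretic invariants, hence uncountably many intermediate easy quantum groups.
\end{enumerate}

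The main obstacle is not in our own argument but is imported: verifying the hyperoctahedral statement, and in particular producing uncountably many intermediate categories in the $H_N \subset H_N^+$ case, relies on the full force of the Raum--Weber machinery. In contrast, the arguments needed for the other three pairs are comparatively short capping-and-generating manipulations. Once the Raum--Weber list is taken as input, the only original work is the removing-blocks filtering step, which is a short check on the explicit categories described above.
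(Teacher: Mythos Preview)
Your approach is essentially identical to the paper's: the paper's proof simply invokes the Raum--Weber classification \cite{rw3} together with Proposition 7.1, referring to \cite{ba3} for details, and your proposal is exactly an expansion of what that reference would contain --- filter the classical orthogonal easy groups by the block-removal criterion, then read off intermediate objects from the known classifications.

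One small factual slip worth correcting: the list of classical orthogonal easy groups consists of exactly six, namely $S_N, S_N', B_N, B_N', H_N, O_N$; there are no separate groups $H_N'$ or $O_N'$ in this classification (for $H_N$ and $O_N$ the relevant categories $P_{even}$ and $\mathcal P_2$ already consist of partitions with even-size blocks, so the parity-of-blocks constraint is vacuous or redundant). This does not affect your filtering argument, since $S_N'$ and $B_N'$ are indeed the ones eliminated by the block-removal test, leaving the four survivors as stated.
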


\begin{proof}
This follows from the classification results in \cite{rw3}, by using Proposition 7.1 above. For details here, we refer to the lecture notes \cite{ba3}.
\end{proof}

Summarizing, we have 4 inclusions to look at. The inclusion $S_N\subset S_N^+$ is conjectured to be maximal, and we will discuss this a bit later, at the end of this section. The situation with $H_N\subset H_N^+$ is too complex, and we will remove this inclusion from our study. Thus, we are left with the ``continuous'' inclusions, $O_N\subset O_N^+$ and $B_N\subset B_N^+$.

According to \cite{max}, and by using the intermediate quantum group $B_N\subset B_N^\circ\subset B_N^+$ constructed in section 4 above, we have the following result:

\begin{proposition}
The following inclusions are maximal:
\begin{enumerate}
\item $O_N\subset O_N^*$.

\item $B_N\subset B_N^\circ$.
\end{enumerate}
\end{proposition}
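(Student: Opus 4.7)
The plan is to dispose of the two parts separately. Part (1) is exactly the main theorem of \cite{max}, so I would simply invoke that reference and nothing further is needed.

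For part (2), my plan is to transport the maximality from (1) across the isomorphism $B_N^+\simeq O_{N-1}^+$ of Proposition 5.1, which I will denote $\Phi$. The first step would be to verify that $\Phi$, being a genuine isomorphism of compact matrix quantum groups, restricts to an isomorphism of classical versions that identifies $O_{N-1}$ with $B_N$: indeed, on the abelianization the matrix $F^*uF$ still has the form $\mathrm{diag}(1,w)$, and the bistochastic condition on $u$ translates exactly into the orthogonality of $w$. Combined with the defining property $\Phi(O_{N-1}^*)=B_N^\circ$ from Proposition 5.2, this would yield a commutative diagram
$$\xymatrix@R=10mm@C=15mm{
O_{N-1}\ar@{=}[d]\ar[r]&O_{N-1}^*\ar@{=}[d]\ar[r]&O_{N-1}^+\ar@{=}[d]\\
B_N\ar[r]&B_N^\circ\ar[r]&B_N^+
}$$
in which all three vertical arrows are instances of $\Phi$. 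Given any intermediate compact quantum group $B_N\subset G\subset B_N^\circ$, I would then pull $G$ back through $\Phi$ to obtain $O_{N-1}\subset\Phi^{-1}(G)\subset O_{N-1}^*$, apply part (1) with $N$ replaced by $N-1$ to conclude $\Phi^{-1}(G)\in\{O_{N-1},O_{N-1}^*\}$, and push forward to get $G\in\{B_N,B_N^\circ\}$.

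The only subtle point in this outline is the identification between the intermediate lattices on the two sides of $\Phi$, which rests on the fact that any isomorphism of compact matrix quantum groups induces, via Tannakian duality or directly at the Hopf-$*$-algebra level, a bijection between lattices of closed quantum subgroups. I expect no genuine obstacle here, and the proposition should then follow immediately from (1) applied one dimension down; the bulk of the substance lies in the already-established part (1).
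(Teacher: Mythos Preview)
Your proposal is correct and follows essentially the same approach as the paper: part (1) is cited from \cite{max}, and part (2) is deduced from (1) by transporting through the isomorphism $B_N^+\simeq O_{N-1}^+$, which the paper summarizes in one word as ``functoriality''. You have simply spelled out in detail what that functoriality step entails, namely that the isomorphism carries the chain $O_{N-1}\subset O_{N-1}^*\subset O_{N-1}^+$ to $B_N\subset B_N^\circ\subset B_N^+$ (this is exactly the diagram in Proposition 5.2) and induces a bijection on intermediate quantum subgroups.
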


\begin{proof}
Here (1) is from \cite{max}, and (2) follows from it, by functoriality. As a remark here, the passage from the maximality of the inclusion $PO_N\subset PU_N$, proved in \cite{max}, to the maximality of $O_N\subset O_N^*$ itself, follows now as well directly, by using the conceptual approach to the half-liberation operation worked out in \cite{bdu}. To be more precise, it follows from \cite{bdu} that an intermediate proper quantum group $O_N\subset G\subset O_N^*$ must come from a certain proper subgroup $H\subset U_N$, satisfying $PH=PG$, and this gives the result.
\end{proof}

It was conjectured in \cite{max} that $O_N\subset O_N^*\subset O_N^+$ is  ``complete'', in the sense that $O_N^*$ should be the unique intermediate quantum group $O_N\subset G\subset O_N^+$, and our conjecture here would be that $B_N\subset B_N^\circ\subset B_N^+$ should be complete as well.

Let us discuss now the inclusion $S_N\subset S_N^+$. This is perhaps the most important inclusion from our list in Proposition 7.2, and an old and well-known conjecture, going back to \cite{bb1}, states that this inclusion is maximal, in the general quantum group setting.

At $N=1,2,3$ this holds indeed, because here, as explained in \cite{wa2}, we have $S_N=S_N^+$. At $N=4$ now, we have the following non-trivial result, from \cite{bb1}:

\begin{proposition}
We have $S_4^+=SO_3^{-1}$, with isomorphism given by the Fourier transform over the Klein group $K=\mathbb Z_2\times\mathbb Z_2$, and the subgroups of this quantum group are:
\begin{enumerate}
\item Infinite quantum groups: $S_4^+$, $O_2^{-1}$, $\widehat{D}_\infty$.

\item Finite groups: $S_4$, and its subgroups.

\item Finite group twists: $S_4^{-1}$, $A_5^{-1}$.

\item Series of twists: $D_{2n}^{-1}$ $(n\geq 3)$, $DC^{-1}_n$ $(n\geq 2)$.

\item A group dual series: $\widehat{D}_n$, with $n\geq 3$.
\end{enumerate}
In particular, these quantum groups are subject to an ADE classification result. Also, the inclusion $S_4\subset S_4^+$ follows to be maximal.
\end{proposition}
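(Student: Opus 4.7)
The plan is to first establish the isomorphism $S_4^+ \simeq SO_3^{-1}$, and then to reduce the subgroup classification, and in particular the maximality of $S_4 \subset S_4^+$, to a question about $SO_3^{-1}$.

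First I would produce the isomorphism explicitly. Let $K = \mathbb{Z}_2 \times \mathbb{Z}_2$, let $F \in M_4(\mathbb{C})$ be its Fourier matrix, and consider the magic unitary $u \in M_4(C(S_4^+))$. A direct computation shows that $F^* u F$ has the block form $\mathrm{diag}(1, w)$, where $w \in M_3(C(S_4^+))$ is an orthogonal matrix whose entries satisfy the twisted commutation relations $ab = -ba$ for distinct entries on the same row/column, and $ab = ba$ otherwise, together with the twisted determinant relation. These are precisely the relations defining $SO_3^{-1}$. One then verifies that the resulting $*$-algebra map in the other direction is an inverse, yielding the claimed isomorphism.

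Next I would classify the quantum subgroups of $SO_3^{-1}$. The key point is that the representation category of $SO_3^{-1}$ is tensor-equivalent to that of $SO_3$, via the $2$-cocycle twist by the standard cocycle on $\widehat{K}$. Consequently, any quantum subgroup $H \subset SO_3^{-1}$ corresponds to a fusion subcategory of $\mathrm{Rep}(SO_3)$, and the ADE-type classification of McKay/Ocneanu type subfactors, combined with the classification of closed subgroups of $SO_3$ (cyclic $\mathbb{Z}_n$, dihedral $D_n$, and the three exceptional polyhedral groups $A_4, S_4, A_5$), produces precisely the list in the statement: the classical subgroups of $SO_3$ untwist to give the group duals $\widehat{D}_n$ and $\widehat{D}_\infty$, the ones with a compatible cocycle lift to the $-1$-twists ($S_4^{-1}, A_5^{-1}, D_{2n}^{-1}, DC_n^{-1}$), and $O_2^{-1}$ appears as the normalizer-type object sitting between $\widehat{D}_\infty$ and $S_4^+$. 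The classical $S_4$ together with its subgroups appears separately since $S_4$ itself embeds in $SO_3^{-1}$.

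Finally, the maximality of $S_4 \subset S_4^+$ is a direct inspection of the list. An intermediate $S_4 \subset G \subset S_4^+$ must have order at least $24$, so it is either finite (in which case it is a subgroup of $S_4^+$ containing $S_4$, forcing $G = S_4$ by inspection of the finite entries of the list) or infinite. Among the infinite entries, $O_2^{-1}$ is a $2$-dimensional object in the Lie-theoretic sense and $\widehat{D}_\infty$ is a group dual of an abelian-by-$\mathbb Z_2$ discrete group, so neither can contain the non-abelian $24$-element group $S_4$. Hence $G = S_4^+$, proving maximality. The technically hardest step will be the fusion-category classification yielding the exhaustive list; the rest is a routine Fourier computation and a finite check.
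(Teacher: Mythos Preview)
Your overall strategy matches the paper's, which simply refers to \cite{bb1} and notes that the classification is obtained ``by taking some inspiration from the McKay classification of the subgroups of $SO_3$''; the Fourier-transform construction of the isomorphism and the final maximality check by inspection of the list are both fine.

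There is, however, a conceptual slip in the middle step that would have to be repaired before the argument goes through. A quantum subgroup $H\subset SO_3^{-1}$ does \emph{not} correspond to a fusion subcategory of $\mathrm{Rep}(SO_3)$: the surjection $C(SO_3^{-1})\to C(H)$ induces a tensor functor $\mathrm{Rep}(SO_3^{-1})\to\mathrm{Rep}(H)$, and what the ADE data constrains is the principal graph (the fusion graph of the restricted fundamental representation), not a subcategory. More importantly, tensor equivalence of $\mathrm{Rep}(SO_3)$ and $\mathrm{Rep}(SO_3^{-1})$ by itself does not give a bijection between their quantum subgroups, because subgroups depend on the fiber functor and not only on the abstract tensor category. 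This is precisely why the list in the statement is not simply ``the subgroups of $SO_3$'': some classical subgroups (for instance $A_4$) admit no $-1$ twist, genuinely new objects such as the group duals $\widehat{D}_n$ appear, and the actual work in \cite{bb1} is to determine, for each admissible ADE principal graph, which concrete Hopf algebra quotients of $C(SO_3^{-1})$ realize it. Your sentence ``the classical subgroups of $SO_3$ untwist to give the group duals $\widehat{D}_n$ and $\widehat{D}_\infty$'' reflects this confusion. The route via McKay/ADE is the right one, but the mechanism you describe is not what produces the list.
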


\begin{proof}
All this is quite technical, the idea being that the classification result can be obtained by taking some inspiration from the McKay classification of the subgroups of $SO_3$. As for the last assertion, this follows from the classification. See \cite{bb1}.
\end{proof}

We will prove in what follows that the inclusion $S_5\subset S_5^+$ is maximal as well, by using as main input some recent advances in subfactor theory, from \cite{imp}. 

Let us first study the arbitrary quantum subgroups $G\subset S_5^+$. As a first, elementary observation here, we have:

\begin{proposition}
We have the following examples of subgroups $G\subset S_5^+$:
\begin{enumerate}
\item The classical subgroups, $G\subset S_5$. There are $16$ such subgroups, having order $1,2,3,4,4,5,6,6,8,10,12,12,20,24,60,120$.

\item The group duals, $G=\widehat{\Gamma}\subset S_5^+$. These appear, via a Fourier transform construction, from the various quotients $\Gamma$ of the groups $\mathbb Z_4,\mathbb Z_2*\mathbb Z_2,\mathbb Z_2*\mathbb Z_3$.
\end{enumerate}
In addition, we have as well all the ADE quantum groups $G\subset S_4^+\subset S_5^+$, embedded via the $5$ standard embeddings $S_4^+\subset S_5^+$.
\end{proposition}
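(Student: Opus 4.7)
The statement consists of three separate existence claims, to be verified independently.

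For (1), I would appeal to the classical enumeration of the subgroups of $S_5$. The standard inclusion $S_5\subset S_5^+$ from Definition 1.3 makes any classical subgroup automatically a quantum subgroup, so the task reduces to finite group theory. A direct inspection using the Sylow theorems and the usual analysis of transitive/intransitive subgroups of a symmetric group yields subgroups of the claimed orders $1,2,3,4,4,5,6,6,8,10,12,12,20,24,60,120$, corresponding to $\{e\}$, $\mathbb Z_2$, $\mathbb Z_3$, $\mathbb Z_4$, $\mathbb Z_2\times\mathbb Z_2$, $\mathbb Z_5$, $\mathbb Z_6$, $S_3$, $D_4$, $D_5$, $A_4$, $D_6$, $F_{20}$, $S_4$, $A_5$, $S_5$. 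This part is essentially bookkeeping and requires no quantum group theory.

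For (2), I would recall the standard Fourier construction producing group dual subgroups of $S_N^+$: given a partition $N=n_1+\cdots+n_k$, the block-diagonal Fourier matrix $F=\mathrm{diag}(F_{n_1},\ldots,F_{n_k})$ conjugates the diagonal matrix built from generators $g_i\in\mathbb Z_{n_i}\subset\mathbb Z_{n_1}*\cdots*\mathbb Z_{n_k}$ into a magic unitary with entries in $C^*(\mathbb Z_{n_1}*\cdots*\mathbb Z_{n_k})$. This yields a surjection $C(S_N^+)\to C^*(\mathbb Z_{n_1}*\cdots*\mathbb Z_{n_k})$, and hence an embedding of the group dual into $S_N^+$. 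For $N=5$, the partitions with at least one block of size $\geq 2$ (the size $1$ blocks contribute only a trivial cyclic factor) are $5=4+1$, $5=2+2+1$, and $5=3+2$, giving the three groups $\mathbb Z_4$, $\mathbb Z_2*\mathbb Z_2$, and $\mathbb Z_2*\mathbb Z_3$. All other group duals $\widehat\Gamma\subset S_5^+$ then arise as quotients, obtained by imposing further relations on the generators.

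For (3), the 5 standard embeddings $S_4^+\subset S_5^+$ are built explicitly. For each $i_0\in\{1,\ldots,5\}$, define a quotient map $C(S_5^+)\to C(S_4^+)$ by sending $u_{i_0 i_0}\mapsto 1$ and $u_{i_0 j},u_{j i_0}\mapsto 0$ for $j\neq i_0$, and identifying the remaining $4\times 4$ submatrix of $u$ with the fundamental magic corepresentation of $S_4^+$. One checks that the magic unitary relations of $C(S_5^+)$ are preserved, which is immediate since collapsing a row and column of a magic matrix gives a magic matrix of smaller size. Composing with the ADE classification from Proposition 7.4 then places all those subgroups inside $S_5^+$ in 5 different ways.

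The main obstacle is the Fourier computation in (2): one must verify carefully, using the orthogonality relations of the Fourier matrices, that conjugation by the block-diagonal $F$ genuinely turns the diagonal of generators into a magic unitary, and that the compact quantum group reconstructed from this magic matrix coincides with $\widehat{\mathbb Z_{n_1}*\cdots*\mathbb Z_{n_k}}$ rather than a proper quotient. This is routine but nontrivial; it is part of the general theory of group dual subgroups of $S_N^+$ going back to \cite{bb1}, and would be cited rather than redone here.
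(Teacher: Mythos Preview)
Your proposal is correct and follows essentially the same approach as the paper: part (1) is classical bookkeeping with the same list of groups (your $D_6$ and $F_{20}$ are the paper's $S_3\rtimes\mathbb Z_2$ and $GA_1(5)$), part (2) invokes Bichon's classification of group dual subgroups of $S_N^+$ via partitions $N=N_1+\ldots+N_k$, and part (3) spells out the point-fixing embeddings $S_4^+\subset S_5^+$ that the paper merely asserts. Two small corrections: the reference for the group dual result should be \cite{bic} rather than \cite{bb1}, and your criterion ``partitions with at least one block of size $\geq 2$'' does not actually single out $4+1,\,2+2+1,\,3+2$ (it would also include $5,\,3+1+1,\,2+1+1+1$); the paper simply lists these three partitions directly, as the ones producing the free products in the statement.
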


\begin{proof}
These results are well-known, the proof being as follows:

(1) This is a classical result, with the groups which appear being respectively the cyclic groups $\{1\},\mathbb Z_2,\mathbb Z_3,\mathbb Z_4$, the Klein group $K=\mathbb Z_2\times\mathbb Z_2$, then $\mathbb Z_5,\mathbb Z_6,S_3,D_4,D_5,A_4$, then a copy of $S_3\rtimes\mathbb Z_2$, the general affine group $GA_1(5)=\mathbb Z_5\rtimes\mathbb Z_4$, and finally $S_4,A_5,S_5$.

(2) This follows from Bichon's result in \cite{bic}, stating that the group dual subgroups $G=\widehat{\Gamma}\subset S_N^+$ appear from the various quotients $\mathbb Z_{N_1}*\ldots*\mathbb Z_{N_k}\to\Gamma$, with $N_1+\ldots+N_k=N$. At $N=5$ the partitions are $5=1+4,1+2+2,2+3$, and this gives the result.
\end{proof}

Summarizing, the classification of the subgroups $G\subset S_5^+$ looks like a particularly difficult task, the situation here being definitely much more complicated than what happens at $N=4$. We will restrict the attention in what follows to the transitive case:

\begin{definition}
Let $G\subset S_N^+$ be a closed subgroup, with magic unitary $u=(u_{ij})$, and consider the equivalence relation on $\{1,\ldots,N\}$ given by $i\sim j\iff u_{ij}\neq0$.
\begin{enumerate}
\item The equivalence classes under $\sim$ are called orbits of $G$.

\item $G$ is called transitive when the action has a single orbit. 
\end{enumerate}
In other words, we call a subgroup $G\subset S_N^+$ transitive when $u_{ij}\neq0$, for any $i,j$.
\end{definition}

This transitivity notion is standard, coming from Bichon's work \cite{bic}. In relation to our questions, observe that any intermediate quantum group $S_N\subset G\subset S_N^+$ must be transitive. Thus, we can restrict the attention to such quantum groups. We have:

\begin{proposition}
We have the following examples of transitive subgroups $G\subset S_5^+$:
\begin{enumerate}
\item The classical transitive subgroups $G\subset S_5$. There are only $5$ such subgroups, namely $\mathbb Z_5,D_5,GA_1(5),A_5,S_5$.

\item The transitive group duals, $G=\widehat{\Gamma}\subset S_5^+$. There is only one example here, namely the dual of $\Gamma=\mathbb Z_5$, which is $\mathbb Z_5$, already appearing above.
\end{enumerate}
In addition, all the ADE quantum groups $G\subset S_4^+\subset S_5^+$ are not transitive.
\end{proposition}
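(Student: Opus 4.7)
The plan is to prove each of the three assertions separately by orbit-theoretic arguments, leaning on standard facts about subgroups of $S_5$ and on Bichon's classification of group dual subgroups of $S_N^+$.

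For part (1), I would invoke the orbit-stabilizer theorem, which forces any transitive $G\subset S_5$ to have order divisible by $5$. Looking at the list $1,2,3,4,4,5,6,6,8,10,12,12,20,24,60,120$ of subgroup orders recorded in Proposition 7.6(1), only the five orders $5,10,20,60,120$ qualify, corresponding precisely to $\mathbb Z_5,D_5,GA_1(5),A_5,S_5$. Conversely, by Cauchy's theorem any such subgroup contains an element of order $5$, which in $S_5$ can only be a $5$-cycle, so each of these five candidates is automatically transitive.

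For part (2), I would invoke the theorem of Bichon \cite{bic} already cited in Proposition 7.6(2): any group dual subgroup $\widehat{\Gamma}\subset S_N^+$ arises from a quotient $\mathbb Z_{N_1}\ast\cdots\ast\mathbb Z_{N_k}\twoheadrightarrow\Gamma$ with $N_1+\cdots+N_k=N$, and the resulting magic unitary is block-diagonal with respect to the partition of $\{1,\ldots,N\}$ into these blocks. Under the equivalence $i\sim j\iff u_{ij}\neq 0$ of Definition 7.6, the orbits therefore coincide with these $k$ blocks, so transitivity forces $k=1$. At $N=5$ this leaves $\Gamma$ a quotient of $\mathbb Z_5$, and only the nontrivial quotient $\Gamma=\mathbb Z_5$ yields a transitive action, with $\widehat{\mathbb Z_5}\simeq\mathbb Z_5$ via Fourier transform; this recovers the cyclic group already listed in (1).

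For the final assertion, I would unpack the five standard embeddings $S_4^+\subset S_5^+$: each fixes an index $i\in\{1,\ldots,5\}$ and realizes $S_4^+$ as the quantum subgroup of $S_5^+$ whose $5\times 5$ magic unitary has $u_{ii}=1$ and $u_{ij}=u_{ji}=0$ for $j\neq i$, with the remaining $4\times 4$ block carrying the magic unitary of $S_4^+$. Any $G\subset S_4^+\subset S_5^+$ inherits $u_{ii}=1$, so $\{i\}$ is a singleton orbit of $G$ and $G$ cannot be transitive; this applies in particular to all ADE quantum groups from Proposition 7.4. I do not foresee a genuine obstacle in this argument — the only subtle point is matching the orbit notion of Definition 7.6 with Bichon's block-partition structure used in part (2), but this is essentially built into his construction and requires only a brief verification.
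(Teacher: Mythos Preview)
Your proposal is correct and follows essentially the same approach as the paper's proof: the paper also derives (1) from the subgroup list in Proposition~7.5, derives (2) from Bichon's result by noting that the orbit decomposition for a group dual coming from $\mathbb Z_{N_1}*\cdots*\mathbb Z_{N_k}\to\Gamma$ is precisely $N=N_1+\cdots+N_k$, and handles the last assertion by observing that each embedding $S_4^+\subset S_5^+$ fixes a point. Your version is simply more explicit---in particular your use of orbit--stabilizer and Cauchy in (1) spells out what the paper dismisses as ``well-known, and elementary''---but the underlying argument is identical. (One cosmetic slip: your references to ``Proposition~7.6(1),(2)'' should be to Proposition~7.5.)
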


\begin{proof}
This follows indeed by examining the lists in Proposition 7.5:

(1) The result here is well-known, and elementary. Observe that $GA_1(5)=\mathbb Z_5\rtimes\mathbb Z_4$, which is by definition the general affine group of $\mathbb F_5$, is indeed transitive.

(2) This follows from the results in \cite{bic}, because with $\mathbb Z_{N_1}*\ldots*\mathbb Z_{N_k}\to\Gamma$ as in the proof of Proposition 7.5 (2), the orbit decomposition is precisely $N=N_1+\ldots+N_k$.

Finally, the last assertion is clear, because the embedding $S_4^+\subset S_5^+$ is obtained precisely by fixing a point. Thus $S_4^+$ and its subgroups are not transitive, as claimed.
\end{proof}

In order to prove the uniqueness result, we will use the recent progress in subfactor theory \cite{imp}. For our purposes, the most convenient formulation of the result in \cite{imp} is:

\begin{proposition}
The principal graphs of the irreducible index $5$ subfactors are:
\begin{enumerate}
\item $A_\infty$, and a non-extremal perturbation of $A_\infty^{(1)}$.

\item The McKay graphs of $\mathbb Z_5,D_5,GA_1(5),A_5,S_5$.

\item The twists of the McKay graphs of $A_5,S_5$.
\end{enumerate}
\end{proposition}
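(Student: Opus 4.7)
The plan is to deduce this from the classification of subfactors of small index established in \cite{imp} and the companion works in that program, so the bulk of the argument is translation rather than new computation. First I would set up the subfactor framework: an irreducible finite index subfactor $A \subset B$ has principal and dual principal graphs encoding its standard invariant, and the combinatorial constraints (connectedness, bipartiteness, Perron--Frobenius eigenvalue equal to $\sqrt{[B:A]}$, together with compatibility with the Jones tower) are already extremely restrictive. At index exactly $5$, the main theorem of \cite{imp} produces a finite list of principal graphs up to the standard $A_\infty$ family, and what remains is to identify each entry with either a group-theoretic McKay graph or a cohomological twist of one.

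For part (2), I would match each finite-depth, untwisted entry in the list with the McKay graph of a transitive permutation group on $5$ points. Concretely, for each transitive $G \subset S_5$ in the list $\mathbb Z_5, D_5, GA_1(5), A_5, S_5$ from Proposition 7.7, the group-subgroup subfactor $R^G \subset R^H$ for a suitable stabilizer $H$ has index $5$, and its principal graph is by construction the bipartite McKay graph of the permutation representation $\mathbb C^5$ of $G$. This exhausts the list of $5$ transitive subgroups of $S_5$ and accounts for part (2); the key point, coming from \cite{imp}, is that nothing else of untwisted group type appears.

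For part (3), I would invoke the standard cohomological twisting construction: given a finite group $G$ acting on a factor and a class $\omega \in H^3(G,\mathbb T)$, one obtains a twisted fusion category whose associated subfactor has the same principal graph combinatorics but a deformed $6j$-symbol structure. The groups in our list for which this yields genuinely new subfactors are exactly $A_5$ and $S_5$, the two cases singled out in \cite{imp}. I would verify compatibility by computing the dimensions of the relevant cohomology groups and noting that $\mathbb Z_5, D_5, GA_1(5)$ do not produce a distinct subfactor via this mechanism in the index-$5$ regime. Finally, part (1) is immediate: $A_\infty$ is the generic infinite-depth possibility (realized for instance by any outer action of a free quantum group on $R$ with Jones index $5$), and the non-extremal $A_\infty^{(1)}$ perturbation is the explicit exotic family isolated in \cite{imp}, which I would cite directly.

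The main obstacle is not the identification step but the completeness of the list, which is the substantive content of \cite{imp} and relies on the full Haagerup--Bisch--Jones--Morrison--Snyder--Penneys machinery of vine/weed enumeration; we take this as a black box. A secondary difficulty is verifying that the twists in (3) are nontrivial and pairwise inequivalent, but this reduces to a cohomology calculation for $A_5$ and $S_5$ and is treated in the references cited by \cite{imp}.
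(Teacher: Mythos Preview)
Your proposal is correct and takes essentially the same approach as the paper: both treat the classification as a black-box citation of \cite{imp} (and \cite{jms}), with the remaining work being translation of the output into the group-theoretic language used here. The paper's own proof is in fact far terser than yours---it simply states that this is a heavy result, refers to \cite{imp}, \cite{jms} for the whole story, and notes that the formulation given is that of \cite{imp} with subgroup subfactors replaced by fixed point subfactors \cite{ba2} and with cyclic groups written as $\mathbb Z_N$---so your more detailed identification of each item (matching the finite-depth untwisted graphs to the five transitive subgroups of $S_5$, invoking $H^3$ for the twists, and isolating $A_\infty$ and its non-extremal perturbation) is additional exposition rather than a different strategy.
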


\begin{proof}
This is a heavy result, and we refer to \cite{imp}, \cite{jms} for the whole story. The above formulation is the one from \cite{imp}, with the subgroup subfactors there replaced by fixed point subfactors \cite{ba2}, and with the cyclic groups denoted as usual by $\mathbb Z_N$.
\end{proof}

In the quantum permutation group setting, this result becomes:

\begin{proposition}
The set of principal graphs of the transitive subgroups $G\subset S_5^+$ coincide with the set of principal graphs of the subgroups $\mathbb Z_5,D_5,GA_1(5),A_5,S_5,S_5^+$.
\end{proposition}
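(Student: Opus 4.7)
The strategy is to apply the subfactor classification of Proposition 7.8 to the fixed-point subfactors associated with transitive quantum subgroups $G\subset S_5^+$, then match each surviving principal graph to one of the six subgroups on the right-hand side.

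First I would invoke the fixed-point construction of \cite{ba2}: to any transitive closed subgroup $G\subset S_N^+$ one associates an irreducible index $N$ subfactor whose standard invariant is governed by the representation category of $G$. For $N=5$ this yields an irreducible index $5$ subfactor which is moreover extremal, since it arises from a compact quantum group action preserving a faithful trace on both factors. By Proposition 7.8 the principal graph therefore lies in the list given there, and extremality immediately eliminates the non-extremal perturbation of $A_\infty^{(1)}$.

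Next I would match each remaining possibility with an entry of the list on the right-hand side. The graph $A_\infty$ is the principal graph of $S_5^+$ itself, since for $N\geq 4$ the representation theory of $S_N^+$ is governed by Temperley--Lieb / free Poisson combinatorics, which produces $A_\infty$. The five McKay graphs in part (2) of Proposition 7.8 are realized by the five classical transitive subgroups from Proposition 7.7\,(1). For the ``twists of the McKay graphs of $A_5, S_5$'' from part (3), the point is that these correspond to $2$-cocycle twists $A_5^{\sigma}, S_5^{\sigma}$ of the classical groups (compare the twists $S_4^{-1}, A_5^{-1}$ sitting inside $S_4^+$ in Proposition 7.4), and such twisting preserves the fusion ring and hence the underlying McKay graph. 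Consequently, the twisted entries in Proposition 7.8 contribute the same principal graphs as the untwisted $A_5, S_5$ already on our list, so no new principal graphs appear.

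The main obstacle is this last matching step. Specifically, one must verify that the entries labelled ``twists'' in the classification of \cite{imp} genuinely correspond to $2$-cocycle twists of the classical subgroups of $S_5^+$, rather than to exotic categorical constructions that might introduce new principal graphs; this has to be cross-checked against the explicit tables in \cite{imp}, \cite{jms}. Once this is settled, combining the extremality reduction with the case-by-case matching in the previous paragraph yields the stated coincidence of sets of principal graphs.
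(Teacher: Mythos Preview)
Your overall strategy---run the fixed-point subfactor through the list in Proposition~7.8, kill the non-extremal $A_\infty^{(1)}$ perturbation, and match $A_\infty$ and the five McKay graphs to $S_5^+$ and the five classical transitive subgroups---is exactly what the paper does. The divergence, and the genuine gap, is in your treatment of item~(3).

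You argue that the entries in Proposition~7.8\,(3) are $2$-cocycle twists of $A_5,S_5$, that such twisting preserves the fusion ring, and hence that ``no new principal graphs appear''. But this conclusion is in tension with the very statement of Proposition~7.8: if the twisted graphs coincided with the untwisted McKay graphs, they would not be listed as a separate item in a classification of \emph{principal graphs}. So either the twists in \cite{imp} are not the fusion-ring-preserving Drinfeld-type twists you have in mind, or your inference from ``same fusion ring'' to ``same principal graph of the associated index~$5$ subfactor'' fails; in either case the step does not go through as written, and your own caveat (``this has to be cross-checked against the explicit tables'') is precisely where the argument breaks. The paper avoids this issue entirely: instead of trying to identify the twisted graphs with something already on the list, it observes that any quantum group realizing one of the graphs in (3) has cardinality at most $|S_5|=120$, and therefore cannot strictly contain $S_5$. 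This cardinality argument is cruder but needs no structural information about the twists beyond their size, and it is exactly what is required for the application to Theorem~7.10. If you want to salvage your route, you would have to show directly from \cite{imp}, \cite{jms} that the graphs in (3) cannot be realized by any transitive $G\subset S_5^+$; the paper itself leaves the existence of such transitive twists open (see the remark following Theorem~7.10), so your stronger claim that they produce no new graphs is not something you can extract from the surrounding material.
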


\begin{proof}
We must take the list of graphs in Proposition 7.8, and exclude some of the graphs, on the grounds that the graph cannot be realized by a transitive subgroup $G\subset S_5^+$.

We have 3 cases here to be studied, as follows:

(1) The graph $A_\infty$ corresponds to $S_5^+$ itself. As for the perturbation of $A_\infty^{(1)}$, this dissapears, because our notion of transitivity requires the subfactor extremality.

(2) For the McKay graphs of $\mathbb Z_5,D_5,GA_1(5),A_5,S_5$ there is nothing to be done, all these graphs being solutions to our problem.

(3)  The possible twists of $A_5,S_5$, coming from the graphs in Proposition 7.8 (3) above. cannot contain $S_5$, because their cardinalities are smaller or equal than $|S_5|=120$.
\end{proof}

In connection now with our maximality questions, we have:

\begin{theorem}
The inclusion $S_5\subset S_5^+$ is maximal.
\end{theorem}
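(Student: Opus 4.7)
The plan is to combine the transitive-subgroup classification of Proposition 7.9 with a dimension count. Suppose, for contradiction, that $S_5 \subsetneq G \subsetneq S_5^+$. Since $G$ contains the transitive subgroup $S_5$, the magic unitary of $G$ has all entries nonzero, so $G$ itself is transitive in the sense of Definition 7.6. Proposition 7.9 therefore applies: the principal graph of the fixed-point subfactor attached to $G \curvearrowright \{1,\ldots,5\}$ must coincide with the principal graph of one of $\mathbb{Z}_5$, $D_5$, $GA_1(5)$, $A_5$, $S_5$, $S_5^+$.

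Next, I would use that $S_5 \subseteq G$ forces a surjection $C(G) \twoheadrightarrow C(S_5)$, so the Haar-measure ``global dimension'' $|G|$ must be at least $|S_5|=120$. Reading the global dimensions off the McKay graphs in Proposition 7.9 gives the values $5, 10, 20, 60, 120$ for the five classical candidates, and $\infty$ for $S_5^+$. Thus only $S_5$ and $S_5^+$ survive the comparison.

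From here I would close the argument by cases. If the principal graph of $G$ is $A_\infty$, then by the input of Proposition 7.8 (from \cite{imp}, \cite{jms}) together with the planar-algebra/Tannakian dictionary, $G$ has the full Temperley--Lieb tensor category at loop parameter $\sqrt{5}$, which is precisely the category of $S_5^+$; hence $G=S_5^+$ by Tannakian duality (Theorem 1.7), contradicting $G \subsetneq S_5^+$. If instead the principal graph of $G$ is the McKay graph of $S_5$, then the global dimension is exactly $120$, so the surjection $C(G) \twoheadrightarrow C(S_5)$ between two Hopf algebras of the same finite dimension is forced to be an isomorphism; hence $G=S_5$, contradicting $S_5 \subsetneq G$. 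Either case yields a contradiction, so no strictly intermediate $G$ exists.

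The main obstacle is the step of promoting ``same principal graph as $S_5$'' to ``same quantum group as $S_5$'', since in principle distinct compact quantum groups can share a fusion category (as happens for Drinfeld twists, e.g.\ the $S_4^{-1}$, $A_5^{-1}$, $D_{2n}^{-1}$, $DC_n^{-1}$ appearing in Proposition 7.4). My strategy for sidestepping this is to exploit the pre-existing inclusion $S_5 \subseteq G$ together with the global-dimension equality: having a surjection of Hopf algebras of the same finite dimension collapses any twist ambiguity and pins down $G=S_5$ directly, without requiring a full reconstruction of $G$ from its principal graph. A secondary technical point is verifying that the fixed-point subfactor construction applied to a transitive $G\subset S_5^+$ really does produce an irreducible extremal index-$5$ subfactor, so that the hypotheses of the classification in \cite{imp} are met; this is precisely the content already recorded in Proposition 7.9 via \cite{ba2}, and can be quoted.
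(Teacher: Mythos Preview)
Your approach is essentially the paper's: observe that any intermediate $S_5\subset G\subset S_5^+$ is transitive, invoke Proposition~7.9, and rule out the small principal graphs by the cardinality bound $|G|\geq|S_5|=120$ coming from the surjection $C(G)\twoheadrightarrow C(S_5)$. The paper's own proof is a single sentence citing Proposition~7.9, so you have correctly unpacked the implicit steps (the cardinality argument in fact already appears inside the proof of Proposition~7.9, part~(3), rather than in the proof of the theorem itself).

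One small imprecision to fix: in the $A_\infty$ case you identify the Tannakian category of $S_5^+$ with ``the full Temperley--Lieb tensor category at loop parameter $\sqrt{5}$''. That is not literally correct: the category of $S_5^+$ is $\mathrm{span}(T_\pi:\pi\in NC)$, spanned by \emph{all} noncrossing partitions, whereas Temperley--Lieb is $\mathrm{span}(T_\pi:\pi\in NC_2)$, the noncrossing pairings, which is the category of $O_N^+$. The clean way to close this case is purely by dimensions: the principal graph determines $\dim\mathrm{End}_G(u^{\otimes k})$ for all $k$, and since $G\subset S_5^+$ gives inclusions $\mathrm{End}_{S_5^+}(u^{\otimes k})\subset\mathrm{End}_G(u^{\otimes k})$, equality of dimensions forces equality of Hom spaces, hence $G=S_5^+$ by Tannakian duality. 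Your $S_5$ case (surjection of equal-dimensional Hopf algebras is an isomorphism) is fine as written.
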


\begin{proof}
This follows indeed from Proposition 7.9 above, with the remark that $S_5$ being transitive, so must be any intermediate subgroup $S_5\subset G\subset S_5^+$.
\end{proof}

With a little more work, the above considerations can give the full list of  transitive subgroups $G\subset S_5^+$. To be more precise, we have here the various subgroups appearing in Proposition 7.9, plus some possible twists of $A_5,S_5$, which remain to be investigated.

\section{Partial maximality}

We have seen in the previous section that, thanks to the results in \cite{bb1}, \cite{max}, \cite{imp}, some of the inclusions coming from Proposition 7.2 are in fact plainly maximal. There are of course many open questions here, and further conjectures that can be made.

In this section we go back to these inclusions, but with a different idea in mind, namely that of investigating them at a higher easiness level. Let us begin with:

\begin{definition}
We say that an inclusion of easy quantum groups $G\subset H$ is maximal at order $p\in\mathbb N$ when there is no proper intermediate subgroup 
$$G\subset K\subset H$$
which is easy at order $p$, in the sense that $K=K^p$.
\end{definition}

Here the quantum groups $K^p$ are those constructed in section 2 above, and appearing in Theorem 2.7. According to the results there, the maximality of $G\subset H$ in the easy setting means to have maximality in the above sense, at order $p=1$.

We also say that an inclusion of easy quantum groups $G\subset K\subset H$ is complete at order $p$ when $K$ is the only intermediate quantum group which is easy at order $p$.

In the case $p=2$, we have the following technical statement:

\begin{proposition}
Assuming that $G\subset H$ comes from an inclusion of categories of partitions $D\subset E$, the maximality at order $2$ is equivalent to the condition
$$<span(D),\alpha T_\pi+\beta T_\sigma>=span(E)$$
for any $\pi,\sigma\in E$, not both in $D$, and for any $\alpha,\beta\neq0$.
\end{proposition}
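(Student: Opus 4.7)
The plan is to unfold both directions via Tannakian duality, using that proper intermediate quantum groups $G\subsetneq K\subsetneq H$ are in bijection with intermediate tensor categories $span(D)\subsetneq C_K\subsetneq span(E)$, and that the condition $K=K^2$ translates, by Definition 2.5, to $C_K$ being generated as a tensor category by its subset $E^2(C_K)$ of elements of the form $\alpha T_\pi+\beta T_\sigma$. Once this dictionary is in place, the equivalence of the proposition becomes a careful comparison of the degree-2 part of $C_K$ with the hypothesised generation property.

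For the reverse implication I would argue by contraposition. Suppose the condition fails, so there exist $\pi,\sigma\in E$ not both in $D$ and scalars $\alpha,\beta\neq 0$ with $C_K:=<span(D),\alpha T_\pi+\beta T_\sigma>$ a strict subcategory of $span(E)$. Using the (generic) linear independence of the family $\{T_\tau\,|\,\tau\in P(k,l)\}$, one has $\alpha T_\pi+\beta T_\sigma\notin span(D)$, so $C_K$ is also strictly larger than $span(D)$, and hence $C_K$ corresponds to a proper intermediate quantum group $K$. Since $C_K$ is generated as a tensor category by the $T_\tau$ with $\tau\in D$ together with the single element $\alpha T_\pi+\beta T_\sigma$, all of which lie in $E^2(C_K)$, we obtain $C_K\subset <E^2(C_K)>\subset C_K$, so $K=K^2$, contradicting maximality at order 2.

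For the forward implication I would again use contraposition: start from a proper intermediate $K$ with $K=K^2$ and derive a failure of the condition. From $C_K=<E^2(C_K)>$ and $C_K\supsetneq span(D)$, some element $v=\alpha T_\pi+\beta T_\sigma\in E^2(C_K)$ lies outside $span(D)$. Linear independence of the $T_\tau$ then forces $\pi,\sigma\in E$, and they are not both in $D$. If $\alpha,\beta\neq 0$ the condition applies directly and gives $span(E)\subset C_K$, contradicting $C_K\subsetneq span(E)$. If instead one coefficient, say $\beta$, vanishes, then $T_\pi\in C_K$ with $\pi\in E\setminus D$; picking any $\pi_0\in D$, the element $T_\pi+T_{\pi_0}$ still lies in $C_K$, has both coefficients nonzero, and involves partitions not both in $D$, so the condition again yields $span(E)\subset C_K$, a contradiction.

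The main technical obstacle is the careful handling of the degenerate cases where $\alpha=0$, $\beta=0$, or $\pi=\sigma$, via the ``dummy term'' trick $T_{\pi_0}$ with $\pi_0\in D$ as above. A secondary but essential subtlety is the implicit use of linear independence of $\{T_\tau\,|\,\tau\in P(k,l)\}$, valid for $N$ sufficiently large relative to $|k|+|l|$, needed to read off the constituent partitions of a vector in $E^2(C_K)$. Beyond these points, the tensor-category manipulation legitimising the equivalence ``$K=K^2$ iff $C_K$ is generated as a tensor category by a subset of $E^2(C_K)$'' is automatic from $E^2(C_K)\subset C_K$, and should not require further work.
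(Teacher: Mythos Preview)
Your proof is correct and follows the same route as the paper's: both translate via Tannakian duality to the observation that an order-$2$ intermediate category $C$ satisfies $C=\langle C\cap span_2(E)\rangle$ and is therefore generated by the elementary categories $C_{\pi\sigma}^{\alpha\beta}=\langle span(D),\,\alpha T_\pi+\beta T_\sigma\rangle$, so that maximality at order~$2$ reduces to the dichotomy $C_{\pi\sigma}^{\alpha\beta}\in\{span(D),span(E)\}$. You spell out the two directions, the linear-independence hypothesis, and the degenerate cases more carefully than the paper does; the only small caveat is that your dummy-term repair for $\beta=0$ tacitly needs $D(k,l)\neq\emptyset$ in the relevant degree, which is harmless in the orthogonal applications at hand.
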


\begin{proof}
Consider indeed an intermediate category $span(D)\subset C\subset span(E)$, corresponding to an intermediate quantum group $G\subset K\subset H$ having order 2. According to the theory in section 2 above, the order 2 condition means that we have $C=<C\cap span_2(P)>$, where $span_2$ denotes the space of linear combinations having 2 components.

Since we have $span(E)\cap span_2(P)=span_2(E)$, the order 2 formula reads:
$$C=<C\cap span_2(E)>$$

Now observe that the category on the right is generated by the categories $C_{\pi\sigma}^{\alpha\beta}$ constructed in the statement. Thus, the order 2 condition reads:
$$C=\left<C_{\pi\sigma}^{\alpha\beta}\Big|\pi,\sigma\in E,\alpha,\beta\in\mathbb C\right>$$

Now since the maximality at order 2 of the inclusion $G\subset H$ means that we have $C\in\{span(D),span(E)\}$, for any such $C$, we are led to the following condition:
$$C_{\pi\sigma}^{\alpha\beta}\in\{span(D),span(E)\}\quad,\quad\forall\pi,\sigma\in E,\alpha,\beta\in\mathbb C$$

But this gives precisely the condition in the statement.
\end{proof}

Let us study now the inclusions in Proposition 7.2. We first have:

\begin{proposition}
$S_N\subset S_N^+$ is maximal at order $2$.
\end{proposition}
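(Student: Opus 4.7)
The plan is to invoke Proposition~8.2 to reformulate the statement as the following concrete generation condition: for every $\pi,\sigma\in P$ not both in $NC$ and every $\alpha,\beta\in\mathbb{C}^{*}$ with $v:=\alpha T_\pi+\beta T_\sigma\notin span(NC)$, one must have $\langle\, span(NC),v\,\rangle = span(P)$. By Proposition~2.6 the problem reduces to fixed-point data, so I may assume $\pi,\sigma\in P(r)$ for some colored integer $r$. The key structural observation is that, applying Proposition~2.1 to the tensor category $C:=\langle\, span(NC),v\,\rangle$, the set $\{\tau\in P:T_\tau\in C\}$ is itself a category of partitions containing $NC$; hence by the \emph{easy} maximality of $S_N\subset S_N^{+}$ recorded in Proposition~7.2~(1), it must be either $NC$ or $P$. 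Consequently the whole problem collapses to producing inside $C$ even a single vector $T_\tau$ with $\tau\notin NC$.

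The argument then splits into three sub-cases. When $\pi=\sigma$, one has $v=(\alpha+\beta)T_\pi$, and the assumption $v\notin span(NC)$ forces both $\alpha+\beta\neq 0$ and $\pi\notin NC$, so $T_\pi\in C$ as required. When $\pi\neq\sigma$ and exactly one of them, say $\sigma$, lies in $NC$, then $T_\sigma\in span(NC)\subset C$, so $v-\beta T_\sigma=\alpha T_\pi$ again produces a single $T_\pi$ with $\pi\notin NC$ inside $C$.

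The nontrivial case is $\pi\neq\sigma$ with both crossing. My plan here is to exploit vertical composition: for any $\tau_1,\tau_2\in NC$ of compatible type, the element $T_{\tau_1}\cdot v\cdot T_{\tau_2}$ belongs to $C$ and equals, up to explicit powers of $N$ counting closed loops, a linear combination $\alpha N^{c_1}T_{\pi'}+\beta N^{c_2}T_{\sigma'}$, where $\pi',\sigma'$ are the partitions obtained by vertically sandwiching $\pi$ and $\sigma$ between $\tau_1$ and $\tau_2$. The goal is to choose $\tau_1,\tau_2$ so that $\pi'\in NC$ while $\sigma'\notin NC$ (or vice versa), and so that the two leading scalars remain nonzero; once this is achieved the argument reduces to the previous sub-case and finishes via easy maximality. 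To produce such $\tau_1,\tau_2$ I will again leverage Proposition~7.2~(1): since the category generated by $NC$ together with $\pi$ alone is already all of $P$, there is a concrete sequence of noncrossing capping/uncapping moves collapsing $\pi$ to any prescribed noncrossing partition, and by tracking how this same sequence acts on $\sigma$ and locally modifying it at positions where the block structures of $\pi$ and $\sigma$ disagree, one can arrange that $\sigma$ does not get collapsed to $NC$.

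The main obstacle is precisely this \emph{separation} step: ensuring that some planar reduction of $\pi$ into $NC$ does not simultaneously send $\sigma$ into $NC$. In principle there could be pathological pairs $(\pi,\sigma)$ for which every noncrossing reduction of $\pi$ coincidentally also collapses $\sigma$ to $NC$; I expect to rule these out by a local combinatorial analysis at the first adjacent pair of points $(i,i+1)$ where the block structures of $\pi$ and $\sigma$ differ, using a single cap or pair at that location to break the symmetry, and if necessary supplementing the analysis with the adjoint $v^{*}$ and outer products $v\otimes T_\rho$, $\rho\in NC$, to generate further elements of $C$. A secondary, easier issue is making sure the scalar $\alpha N^{c_1}+\beta N^{c_2}$, or the analogous difference after subtraction, does not accidentally vanish; this can always be circumvented by varying the chosen cappings, since different reductions yield different loop counts $c_1,c_2$ and hence different relative scalings of the two terms.
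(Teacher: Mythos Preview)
Your reduction via Proposition~2.1 --- observing that $D^1=\{\tau\in P:T_\tau\in C\}$ is a category of partitions containing $NC$, hence equals $NC$ or $P$ by the easy maximality in Proposition~7.2(1), so that it suffices to exhibit a single $T_\tau$ with $\tau\notin NC$ inside $C$ --- is valid and is a genuine streamlining not present in the paper, which carries the full linear combination $\alpha T_\pi+\beta T_\sigma$ through the argument without isolating this structural step.

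The separation step in your hard case ($\pi\neq\sigma$, both in $P\setminus NC$) is, however, a real gap: you write that you ``expect to rule these out by a local combinatorial analysis at the first adjacent pair of points where the block structures differ'', but no such analysis is carried out, and it is not evident that a single local cap will always push exactly one of $\pi,\sigma$ into $NC$ while keeping the other crossing. The paper resolves this by aiming at a different target. Instead of capping $\pi$ \emph{into} $NC$, it uses the semicircle-capping recursion of \cite{bcs} to cap $\pi$ down to the \emph{standard crossing} in $P(2,2)$, preserving one chosen crossing of $\pi$ throughout. The key observation is that $P(2,2)$ contains only one crossing partition; hence whatever $\sigma$ becomes under the same caps, if it differs from the basic crossing it lies automatically in $NC(2,2)$ and can be subtracted off, isolating $T_{\slash\hskip-2.1mm\backslash}$. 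This handles the case where $\pi$ and $\sigma$ have at least one crossing not in common (so that the chosen crossing of $\pi$ is not a crossing of $\sigma$, guaranteeing the images differ). When $\pi$ and $\sigma$ share all their crossings but differ elsewhere, the paper first performs a preliminary move --- selecting a common crossing together with two strings on which $\pi$ and $\sigma$ disagree, and joining them --- to produce a new pair $(\pi',\sigma')$ falling under the previous case. Your scalar worry evaporates once separation is achieved: the capping coefficients are nonzero powers of $N$, and since $\pi',\sigma'$ are then distinct there is no cancellation to avoid.
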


\begin{proof}
We use the ``semicircle capping'' method from \cite{bcs}, where the inclusion $S_N\subset S_N^+$ was shown to be maximal, at the easy quantum group level. To be more precise, it was shown there that any $\pi\in P-NC$ has the property $<\pi>=P$, and in order to establish this formula, the idea was to cap $\pi$ with semicircles, as to preserve one crossing, chosen in advance, and to end up, by a recurrence procedure, with the standard crossing.

In our present case now, if we want to prove the maximality at level 2, in view of Proposition 8.2 above, the statement that we have to prove is as follows: ``for $\pi\in P-NC,\sigma\in P$ and $\alpha,\beta\neq0$ we have $<\alpha T_\pi+\beta T_\sigma>=span(P)$''. 

In order to do this, our claim is that the same method as in \cite{bcs} applies, after some suitable modifications. We have indeed two cases, as follows:

(1) Assuming that $\pi,\sigma$ have at least one different crossing, we can cap as in \cite{bcs} the partition $\pi$ as to end up with the basic crossing, and $\sigma$ becomes in this way an element of $P(2,2)$ different from this basic crossing, and so a noncrossing partition, from $NC(2,2)$. Now by substracting this noncrossing partition, which belongs to $C_{S_N^+}=span(NC)$, we obtain that the standard crossing belongs to $<\alpha T_\pi+\beta T_\sigma>$, and we are done.

(2) In the case where $\pi,\sigma$ have exactly the same crossings, we can start our descent procedure by selecting one common crossing, and then two strings of $\pi,\sigma$ which are different, and then joining the crossing to these two strings. We obtain in this way a certain linear combination $\alpha' T_{\pi'}+\beta'T_{\sigma'}\in <\alpha T_\pi+\beta T_\sigma>$ which satisfies the conditions in (1) above, and we can continuate as indicated there.
\end{proof}

More generally, we have the following result:

\begin{theorem}
The following inclusions are complete at order $2$:
\begin{enumerate}
\item $S_N\subset S_N^+$.

\item $O_N\subset O_N^*\subset O_N^+$.

\item $B_N\subset B_N^+$.
\end{enumerate}
\end{theorem}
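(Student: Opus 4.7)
The plan is to apply Proposition 8.2 uniformly to the three inclusions, and then to adapt the ``semicircle capping'' descent of \cite{bcs} which underlies the order-$1$ maximality in all three cases. Recall that completeness at order $2$ of $G\subset K\subset H$ means that the list of intermediate order-$2$ easy quantum groups coincides with the list of intermediate easy quantum groups from Proposition 7.2, i.e.\ the empty list in (1) and (3), and $\{O_N^*\}$ in (2).

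For (1), there is nothing more to do: since Proposition 7.2 gives no intermediate easy object between $S_N$ and $S_N^+$, completeness at order $2$ is exactly the maximality at order $2$ already established in Proposition 8.3.

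For (3), I would rerun the proof of Proposition 8.3 inside the larger category $P_{12}$ of singletons-and-pairings associated to $B_N$, with $NC_{12}$ in the role of $NC$. Given $\pi\in P_{12}-NC_{12}$, $\sigma\in P_{12}$ and nonzero scalars $\alpha,\beta$ with $\alpha T_\pi+\beta T_\sigma\in C$, I would cap $\pi$ with matching pairings (and, when useful, singletons, now available) so as to preserve a chosen crossing and kill all the others, while keeping track of $\sigma$. The two subcases of Proposition 8.3 transfer verbatim: if $\pi,\sigma$ differ at some crossing then $\sigma$ descends to a noncrossing element of $P_{12}(2,2)\subset span(NC_{12})$, which can be subtracted and yields the basic crossing in $C$; if $\pi,\sigma$ agree on all crossings, joining a common crossing to two differing strings produces a new pair satisfying the first subcase. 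Either way $C=span(P_{12})$, and (3) follows.

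For (2), the argument must be refined because $span(NC_2)\subset span(P_2^*)\subset span(P_2)$ offers an additional possible value for $C$. Given $\pi,\sigma\in P_2$ not both in $NC_2$, with $\alpha T_\pi+\beta T_\sigma\in C$, I would perform the capping descent of \cite{bcs}. Two situations can then arise. Either the descent eventually produces a configuration in which the basic crossing of $P_2(2,2)$ appears with a nonzero coefficient, and the orthogonal argument of \cite{bcs} then gives $C=span(P_2)$; or the descent is pinned at the half-commutation crossing $\slash\hskip-1.2mm|\hskip-1.2mm\backslash\in P_2(3,3)$, which can be shown to force $\pi,\sigma\in P_2^*$, and then combined with the standard equality $\langle NC_2,\slash\hskip-1.2mm|\hskip-1.2mm\backslash\rangle=P_2^*$ from \cite{bsp} this gives $C=span(P_2^*)$.

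The main obstacle is case (2): the descent of \cite{bcs} was engineered to terminate at the basic crossing, and in the two-term setting one must carefully control, for each configuration $(\pi,\sigma)$, which of the two ``terminal'' crossings $\slash\hskip-1.2mm|\hskip-1.2mm\backslash$ or the basic crossing is reached at the bottom of the descent, and with what coefficient. I expect this bookkeeping to be substantially cleaner using the conceptual interpretation of $O_N^*$ via its projective version from \cite{bdu}, already alluded to in Proposition 7.3, and this is the step I would invest most of the effort in.
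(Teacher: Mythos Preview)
Your proposal is correct and follows essentially the same approach as the paper, which simply states that (1) is Proposition 8.3 and that (2) and (3) are proved ``by adapting the semicircle capping proofs in \cite{bcs}, in the same way.'' Your write-up is in fact considerably more detailed than the paper's one-line proof: you correctly isolate (2) as the delicate case, because of the extra possible terminal value $span(P_2^*)$ for $C$, and you outline the right dichotomy (descent reaching the basic crossing versus descent pinned at $\slash\hskip-1.6mm|\hskip-1.6mm\backslash$), whereas the paper does not comment on this at all.
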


\begin{proof}
Here (1) comes from Proposition 8.3, and the proofs of (2) and (3) are similar, by adapting the semicircle capping proofs in \cite{bcs}, in the same way.
\end{proof}

At order 3 and higher the situation is more complicated, because when given a linear combination of type $\alpha T_\pi+\beta T_\sigma+\gamma T_\nu$ with $\pi,\sigma,\nu$ assumed to be distinct, it is quite unclear on what to deduce on the joint crossings of $\pi,\sigma,\nu$, in order to perform the descent method, by capping. All this requires some new ideas, and we believe that the proof of the maximality of the above inclusions, at order 3 or higher, is a good question.

In addition, we know from section 5 above that the inclusion $B_N\subset B_N^+$ is definitely not maximal at order 8, because of the intermediate quantum group $B_N^\circ$ constructed there. Our conjecture would be that this inclusion should be maximal up to order 7.

Summarizing, the notion of partial easiness introduced in this paper leads to some advances on these maximality questions. The main questions raised by the present work, however, remain those raised in sections 3-6, in connection with the exact computation of the easiness level, for the various homogeneous quantum groups $S_N\subset G\subset U_N^+$.

\end{document}